\documentclass[11pt]{article}
\usepackage[a4paper,margin=26mm]{geometry}
\usepackage[T1]{fontenc}
\usepackage{lmodern}
\usepackage{amsmath,amssymb,amsthm,mathtools}
\usepackage{microtype}
\usepackage{enumitem}
\usepackage[hidelinks]{hyperref}

\newtheorem{theorem}{Theorem}[section]
\newtheorem{lemma}[theorem]{Lemma}
\newtheorem{proposition}[theorem]{Proposition}
\newtheorem{corollary}[theorem]{Corollary}
\theoremstyle{remark}
\newtheorem{remark}[theorem]{Remark}
\newcommand{\A}{\mathcal A}
\newcommand{\B}{\mathcal B}
\newcommand{\T}{\mathcal T}
\newcommand{\R}{\mathbb R}
\newcommand{\ind}{\mathbf 1}
\DeclareMathOperator{\osc}{osc}
\DeclareMathOperator{\Var}{Var}
\DeclareMathOperator{\Sym}{Sym}
\setlist[enumerate]{label=(\roman*),nosep,leftmargin=*}
\hypersetup{pdftitle={Submodular decompositions: a bounded-variation counterexample and an exact planar cut formula}}

\title{\Large\bfseries Submodular decompositions: a bounded-variation\\[0.25em]
counterexample and an exact planar cut formula}
\author{%
\normalsize\textsc{Guoqing Wang}\quad
\textsc{Linlin Wang}\quad
\textsc{Jun Yue}}
\date{}

\begin{document}
\maketitle

\begin{abstract}

B\'erczi, Geh\'er, Imolay, Lov\'asz, and Schwarcz
(\emph{SIAM J. Discrete Math.} \textbf{40} (2026), 308--335)
asked whether every set function of bounded variation is a difference
of two bounded submodular functions, and how to determine the optimum
in a monotonic sum-decomposition of a planar graph cut function.
In this paper, we answer both questions and obtain quantitative
extensions. For the first question, we characterize the continuous
profiles $f:[0,1]\to\R$ for which $A\mapsto f(m(A))$ is a bounded
submodular difference on an atomless probability space.
More precisely, we express the minimum oscillation of the subtracted
component through the positive part of the distributional second
derivative. We also characterize admissible discontinuous profiles
and give an exact summability criterion for countable block functions.
In particular, the first question has a negative answer even for
increasing functions of variation one whose scalar profiles have
log-Lipschitz derivatives and belong to every $C^{1,\alpha}$,
$0<\alpha<1$. Turning to graph cuts, we prove that the optimum equals
the total edge weight minus the maximum fractional triangle-packing
weight for every nonnegative weighting if and only if the graph
is $K_5$-free. Moreover, maximum packings parametrize the symmetrized
optimal components, while on $K_4$-free graphs we describe all optimal
components and characterize uniqueness. Consequently, the formula
applies to weighted planar graphs and implies edge-weight monotonicity
on $K_5$-free graphs. For rational weights, it also yields
polynomial-time constructions.

\end{abstract}

\begingroup
\small
\noindent\textbf{Keywords.} Submodular set function; bounded variation; monotonic decomposition; graph cut; fractional triangle packing.

\endgroup

\section{Introduction}

Let $\A$ be an algebra of subsets of a set $J$. A set function
$\varphi:\A\to\R$ is \emph{submodular} if
$\varphi(X)+\varphi(Y)\ge\varphi(X\cap Y)+\varphi(X\cup Y)$ for
all $X,Y\in\A$. It is \emph{increasing} if
$\varphi(X)\le\varphi(Y)$ whenever $X\subseteq Y$, and
\emph{decreasing} if the reverse inequality holds. A set function
is bounded if its absolute value is uniformly bounded.
A set function $\varphi$ has \emph{bounded variation} if the sums
$\sum_{i=1}^r|\varphi(X_i)-\varphi(X_{i-1})|$ are uniformly bounded
over all finite chains
$\varnothing=X_0\subseteq\cdots\subseteq X_r=J$ in $\A$.
Equivalently, $\varphi$ is a difference of two increasing
real-valued set functions. The components in this characterization
are not required to be submodular. Thus bounded variation provides
a natural starting point for studying the additional restrictions
imposed by submodularity. For $\varphi(\varnothing)=0$, a \emph{monotonic
sum-decomposition} is a representation $\varphi=u+v$,
where $u$ is increasing and submodular, $v$ is
decreasing and submodular, and
$u(\varnothing)=v(\varnothing)=0$.
We also consider representations $\varphi=g-h$ with $g,h$ bounded and submodular,
without a monotonicity requirement. Such a representation is called
a \emph{bounded submodular difference}. Here and below, normalization
of a decomposition refers to the values of its components at the
empty set.

Submodular functions occur in both combinatorial optimization
and the analytic theory of capacities. Lov\'asz~\cite{Lovasz}
connects these settings in the study of infinite ground sets,
while his work on the Choquet extension~\cite{LovaszChoquet}
uses bounded variation to treat integration without monotonicity.
These connections motivate the study of decompositions that preserve
submodularity. On a finite ground set, every submodular function
admits a monotonic sum-decomposition. More generally,
every set function is a difference of two increasing submodular
functions~\cite[Theorem~4.18]{BGILS}, with a constant restored when
its value at the empty set is nonzero. On infinite ground sets,
B\'erczi, Geh\'er, Imolay, Lov\'asz, and
Schwarcz~\cite{BGILS} showed that monotonic
decompositions can fail even for bounded submodular functions.
They also proved existence under weak infinite alternation,
an additional family of higher-order inequalities.

In this paper, we address two questions left open in~\cite{BGILS}.
The first asks whether every set function of bounded variation is
a difference of two bounded submodular functions. The second asks
for the minimum value of the increasing component on the full vertex
set in a monotonic sum-decomposition of a planar graph cut function.
The former is an existence question on general set algebras; the
latter is an optimization question on finite graphs. We give a
negative answer to the first and an exact formula for the second,
together with quantitative criteria and descriptions of the optimal
components.

For the bounded-variation question, let $\B$ be the
Borel $\sigma$-algebra of $[0,1]$ and let $\lambda$ be Lebesgue
measure. For a continuous function $f:[0,1]\to\R$, consider the
set function $\Phi_f(A)=f(\lambda(A))$. Concave profiles give
submodular set functions, so a difference of continuous concave
profiles gives a bounded submodular difference. Our first theorem
proves the converse and uses it to construct an increasing
counterexample of variation one.

\begin{theorem}\label{thm:main}
Let $f:[0,1]\to\R$ be continuous, and define $\Phi_f(A)=f(\lambda(A))$ for $A\in\B$. Then $\Phi_f$ is a difference of two bounded submodular functions on $\B$ if and only if $f$ is a difference of two continuous concave functions on $[0,1]$.

Moreover, there exists an increasing $F\in C^1([0,1])$ with $F(0)=0$, $F(1)=1$, and $1/2\le F'(t)\le3/2$ for every $t\in[0,1]$ such that $\Phi_F(A)=F(\lambda(A))$ is not a difference of two bounded submodular functions. The set function $\Phi_F$ is increasing, has variation one, and satisfies $|\Phi_F(A)-\Phi_F(B)|\le\frac32\lambda(A\triangle B)$ for all $A,B\in\B$.
\end{theorem}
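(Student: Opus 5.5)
The plan is to symmetrize a given decomposition by averaging over measure-preserving rearrangements, so that both components depend only on $\lambda(A)$, and then to read off concavity of the profiles. The ``if'' direction is the remark before the statement. If $f=g_0-h_0$ with $g_0,h_0$ continuous and concave, then $A\mapsto g_0(\lambda(A))$ is submodular. To see this, set $a=\lambda(X\cap Y)$, $b=\lambda(X)$, $c=\lambda(Y)$ and $d=\lambda(X\cup Y)$, so that $a+d=b+c$ and $a\le b,c\le d$. Concavity then gives $g_0(b)+g_0(c)\ge g_0(a)+g_0(d)$. Both components are bounded by continuity.

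For the converse, I will assume $\Phi_f=g-h$ with $g,h$ bounded and submodular on $\B$. The key step is to build, from $h$, a profile $\eta$ on $[0,1]$ such that $f+\eta$ and $\eta$ both satisfy a midpoint concavity inequality. Fix $a\le b,c\le d$ with $a+d=b+c$, and choose sets $X,Y$ realizing these measures, say $X\cap Y=[0,a]$, $X=[0,b]$, and $Y=[0,a]\cup[b,b+c-a]$. Submodularity of $g$ and $h$ gives
\[
f(b)+f(c)-f(a)-f(d)=\bigl[g\text{-defect}\bigr]-\bigl[h\text{-defect}\bigr]\ge -\bigl[h(X)+h(Y)-h(X\cap Y)-h(X\cup Y)\bigr].
\]
The $h$-defect depends on the chosen sets, so I must control it uniformly. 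For this I will average $h$ over the group of measure-preserving interval-exchange maps. The natural device is an invariant mean on an amenable group, for example the group of dyadic permutations of $[0,1]$, which is a union of finite symmetric groups and hence amenable. This yields $\bar h(A)$, defined first for dyadic-rational sets. Averaging preserves both submodularity and boundedness, and $\Phi_f$ is already invariant. So $\bar g=\Phi_f+\bar h$ is also submodular. Moreover, $\bar h$ is a function $\eta(\lambda(A))$ on finite unions of dyadic intervals, since any two such sets of equal measure are related by a dyadic permutation.

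Then $\eta$ and $f+\eta$ are bounded functions on the dyadic rationals satisfying $\eta(b)+\eta(c)\ge\eta(a)+\eta(d)$ whenever $a+d=b+c$. In particular, they are midpoint concave. A midpoint-concave function that is bounded on the dyadics is concave on the dyadics. Hence $\eta$ is concave and bounded, and $f+\eta$ is concave.

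One technical issue is that the domain is only the dyadic rationals. I will extend $\eta$ to $(0,1)$ by its concave envelope. This extension is concave, bounded and continuous on the open interval, and it has finite one-sided limits at the endpoints. Because $f$ is continuous, $f+\eta$ extends as well. At the endpoints the concave extension may jump downward, so I will redefine it by its limits to obtain a continuous concave function. The continuous function $f$ then equals $(f+\eta)-\eta$ on the dyadics, hence everywhere by continuity. I expect this averaging step to be the main obstacle, namely making sure the invariant mean genuinely preserves the submodular inequality for sets that are not dyadic. To avoid that difficulty, I will restrict throughout to the dyadic algebra, which suffices.

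For the counterexample, it suffices to find a $C^1$ function $F$ with $F'$ taking values in $[1/2,3/2]$, $F(0)=0$ and $F(1)=1$, such that $F$ is not DC on $[0,1]$. A continuous function is DC exactly when $F'$ has bounded variation on compact subintervals. Since $[0,1]$ is closed and a DC decomposition must be continuous up to the endpoints, this means $F'$ must be of bounded variation on all of $[0,1]$. So I will take $F'=1+\psi$, where $\psi$ is continuous, $|\psi|\le 1/2$, $\int_0^1\psi=0$ and $\psi\notin BV$. One choice is $\psi(t)=c\,t\sin(1/t)$ with a correction term added to make the integral zero. A smoother profile, as alluded to in the abstract, is not needed here.

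The remaining properties of $\Phi_F$ then follow directly. It is increasing because $F'>0$. Along any chain the sum $\sum_i|\Phi_F(X_i)-\Phi_F(X_{i-1})|$ telescopes to $F(1)-F(0)=1$, so the variation is one. The Lipschitz bound comes from the mean value theorem, since $|\lambda(A)-\lambda(B)|\le\lambda(A\triangle B)$ and $F'\le 3/2$.
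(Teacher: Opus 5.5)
The scalar criterion is correct. Your necessity half takes a genuinely different route from the paper. An invariant mean on the locally finite group of dyadic permutations gives, in one step, a decomposition on the dyadic algebra that depends only on measure. The paper instead averages over $\Sym(N)$ on non-nested equal-measure partitions and passes to a limit by Arzel\`a--Ascoli. The "obstacle" you anticipate is not one, since dyadic permutations act by lattice automorphisms. The genuine gap is in the counterexample.

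You claim that a DC decomposition on the closed interval forces $F'$ to have bounded variation on all of $[0,1]$. This is false. Continuity of the concave components at $0$ and $1$ does not control their derivatives there; $\sqrt t$ is an example. By Theorem~\ref{thm:continuous}, the exact condition is only $\int_{(0,1)}t(1-t)\,d|F''|<\infty$, which allows infinite variation of $F'$ that accumulates at an endpoint.

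Your choice $\psi(t)=c\,t\sin(1/t)$ puts all its infinite variation at the endpoint $0$. On every compact subinterval of $(0,1)$ it is smooth, so Lemma~\ref{lem:derivative} gives no obstruction, and in fact your $F$ \emph{is} DC. With any smooth correction term, $|F''(t)|\le C(1+t^{-1})$ on $(0,1]$. Writing $\rho^{\pm}=(F'')^{\pm}$, this gives $\int_s^1\rho^+(r)\,dr\le C(1+\log(1/s))$, which is integrable near $0$. Hence $H(t)=-\int_t^1\int_s^1\rho^+(r)\,dr\,ds$ is continuous and concave on $[0,1]$. The function $G=F+H$ satisfies $G''=-\rho^-\le0$, so it is also continuous and concave on $[0,1]$, and $F=G-H$. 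By the sufficiency direction you proved yourself, $\Phi_F$ is then a bounded submodular difference, so your $F$ is not a counterexample.

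The fix is to make the oscillation accumulate at an interior point, because the available necessary condition only concerns compact subintervals of $(0,1)$. The paper takes $F'(t)=1+(t-\tfrac12)\sin\bigl((t-\tfrac12)^{-2}\bigr)$. This is continuous with values in $[1/2,3/2]$. Since $F'-1$ is odd about $1/2$, you get $F(1)=1$ with no correction. Set $a_n=(\pi/2+n\pi)^{-1/2}$ and $t_n=\tfrac12+a_n$. Then $F'(t_n)=1+(-1)^na_n$, so the variation of $F'$ on $[1/2,3/4]$ is infinite because $\sum_n a_n=\infty$. Lemma~\ref{lem:derivative} then excludes a DC representation, and your scalar criterion finishes the argument.
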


A continuous function on $[0,1]$ is called \emph{DC} if it is a
difference of two continuous convex functions, or equivalently,
of two continuous concave functions; see Hartman~\cite{Hartman}.
Theorem~\ref{thm:main} therefore identifies the admissible continuous
profiles precisely as the DC functions. In the necessity direction,
the bounded submodular components are arbitrary: neither dependence
only on measure nor any continuity or invariance assumption is
required. Nevertheless, they can be replaced by components arising
from continuous concave profiles. This characterization extends
to every atomless probability space, where we can also determine
the minimum oscillation of the subtracted component. To state this
refinement, let $(X,\A,m)$ be an atomless probability space,
let $f\in C([0,1])$, and put $\Phi_f(A)=f(m(A))$. For a bounded set
function $h$, write
$\osc(h)=\sup_{A\in\A}h(A)-\inf_{A\in\A}h(A)$,
and define
\[
 \mathcal C(f)=\inf\{\osc(h):\Phi_f=g-h,\quad
                    g,h\text{ bounded and submodular}\},
\]
with $\mathcal C(f)=+\infty$ if no such decomposition exists.
The next theorem expresses this quantity in terms of the kernel
$K(x,t)=\min\{x,t\}-xt$ and the distributional second derivative
$\mu=f''$ on $(0,1)$. When $\mu$ is a signed locally finite Radon
measure, we denote its positive part by $\mu^+$ and its total
variation measure by $|\mu|$.

\begin{theorem}\label{thm:continuous}
The following are equivalent:
\begin{enumerate}
\item $\Phi_f$ is a difference of two bounded submodular functions;
\item $f$ is a difference of two continuous concave functions on $[0,1]$;
\item $\mu=f''$ is a signed locally finite Radon measure on $(0,1)$ and
\[
 \int_{(0,1)}t(1-t)\,d|\mu|(t)<\infty.
\]
\end{enumerate}
When these conditions hold,
\[
 \mathcal C(f)=
 \max_{x\in[0,1]}\int_{(0,1)}K(x,t)\,d\mu^+(t).
\]
The infimum is attained by components depending only on measure.
\end{theorem}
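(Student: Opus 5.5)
The strategy is to prove (ii)$\Leftrightarrow$(iii) by one-dimensional convex analysis, (ii)$\Rightarrow$(i) by composing concave profiles with $m$, and (i)$\Rightarrow$(iii) by averaging an arbitrary decomposition into a measure-only one. The oscillation formula then comes from optimizing over measure-only decompositions.

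For (ii)$\Leftrightarrow$(iii), write $f=a-b$ with $a,b$ continuous and concave. Then $a''$ and $b''$ are nonpositive Radon measures. A continuous concave $a$ on $[0,1]$ satisfies $\int t(1-t)\,d|a''|<\infty$: represent $a=\ell-\int K(\cdot,t)\,da''(t)$ with $\ell$ affine, and evaluate the integral at an interior point, using $K(x,t)\ge c_x\,t(1-t)$. Conversely, given (iii), split $\mu=\mu^+-\mu^-$ and set
\[
 a(x)=\ell(x)-\int K(x,t)\,d\mu^-(t),\qquad b(x)=\int K(x,t)\,d\mu^+(t)\cdot(-1)\cdot(-1)\ \text{adjusted}.
\]
Concretely, $f=\ell+\int K(x,t)\,d\mu^+(t)-\int K(x,t)\,d\mu^-(t)$, where $\ell$ is the affine interpolant of $f(0),f(1)$. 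The weighted integrability makes both integrals continuous on $[0,1]$ and zero at the endpoints, by dominated convergence with $K\le t(1-t)/\min(\cdot)$-type bounds near the endpoints. Each $x\mapsto K(x,t)$ is concave, so $f=g_0-h_0$ with $h_0(x)=\int K(x,t)\,d\mu^+(t)$ concave. For (ii)$\Rightarrow$(i), $A\mapsto c(m(A))$ is submodular for concave $c$ by the standard diminishing-increments argument on measures, and it is bounded by continuity. This also gives $\mathcal C(f)\le\osc(h_0\circ m)=\max_x h_0(x)$, since $h_0\ge0$ vanishes at the endpoints and $m$ attains every value in $[0,1]$ because the space is atomless.

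The main step is (i)$\Rightarrow$(iii) together with the lower bound on $\mathcal C(f)$. Let $\Phi_f=g-h$ with $g,h$ bounded and submodular. I would extract second-difference inequalities. For disjoint sets $A,B,C$ with measures $x,s,s$, submodularity of $g$ applied to $A\cup B$ and $A\cup C$ gives
\[
 \Delta^2 f:=f(x+2s)-2f(x+s)+f(x)\le -\bigl[h(A\cup B)+h(A\cup C)-h(A)-h(A\cup B\cup C)\bigr].
\]
The bracket is nonnegative by submodularity of $h$. Now take a fine partition of an interval of measure into $n$ pieces of measure $1/n$ via a chain, using nonatomicity to choose a measure-preserving map to $[0,1]$. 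Summing the negative parts of $\Delta^2 f$ along telescoping chains, the $h$-terms telescope into differences of $h$-values, which are bounded by $\osc(h)$. A cleaner route is averaging. Pull back along a measure isomorphism onto $[0,1]$ restricted to a sub-$\sigma$-algebra, and average $h$ over permutations of the $n$ dyadic blocks. This keeps submodularity and boundedness, does not increase $\osc$, and yields a function $\tilde h_n(k/n)$ of the block count. Submodularity then forces $\tilde h_n(k/n)$ to be discretely concave in $k$. The decomposition becomes $f(k/n)=\tilde g_n(k/n)-\tilde h_n(k/n)$, with both sides discretely concave and $\osc\tilde h_n\le\osc h$.

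Discrete concavity of $\tilde h_n$, together with the fact that $\tilde g_n=f+\tilde h_n$ is discretely concave, means the positive part of the discrete second difference of $f$ is dominated by $-\Delta^2\tilde h_n$. Taking the Green-function representation on the grid, $\tilde h_n(x)-(\text{linear interpolation})=\sum_t K_n(x,t)\,(-\Delta^2\tilde h_n)(t)\ge\sum_t K_n(x,t)\,(\Delta^2 f)^+(t)$, so $\osc h\ge \max_x\sum_t K_n(x,t)(\Delta^2f)^+(t)$. Letting $n\to\infty$ along dyadics, the uniform bound forces the discrete measures $(\Delta^2 f)^+$ to have bounded $K$-weighted mass. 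Weak-$*$ compactness and lower semicontinuity then give a limit measure $\nu\ge\mu^+$, so $\mu$ is Radon with the weighted integrability. This needs the $\mu^-$ part too, which is handled by the analogous bound from $g$, or by noting $f+\tilde h$ is concave. The limit also gives $\mathcal C(f)\ge\max_x\int K(x,t)\,d\mu^+$.

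The hardest step will be making this limit rigorous. I need to identify the limit of $(\Delta^2 f)^+$ with $\mu^+$ rather than merely a measure dominating it. I also need the $\max_x$ to pass through the limit, using lower semicontinuity of $\int K(x,\cdot)$ against vague convergence, with care near the endpoints where $K$ vanishes. Attainment follows because $h_0\circ m$ achieves the bound.
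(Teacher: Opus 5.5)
Your outline is correct, but at the key step it takes a genuinely different route from the paper.

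\textbf{How the two routes differ.} The paper never passes to the limit in second-difference measures. It averages over equal-measure partitions and takes the concave piecewise linear interpolants $G_N,H_N$. These are uniformly bounded, hence uniformly Lipschitz on compact subintervals of $(0,1)$. It extracts locally uniform limits by Arzel\`a--Ascoli and extends them to continuous concave $G,H$ on $[0,1]$ with $f=G-H$ and $\osc(H)\le\osc(h)$. All the measure theory is then done once, for concave functions. The kernel representation $H-L_H=\int K\,d\nu_H$ with $\nu_H=-H''$, together with $\mu=\nu_H-\nu_G$ and minimality of the Jordan decomposition ($\mu^+\le\nu_H$), yields both (iii) and $\osc(h)\ge\max_x\int K(x,t)\,d\mu^+(t)$.

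You instead apply the discrete Green identity at each level $N$, which is exactly the lower bound of Theorem~\ref{thm:finite}. You then pass to the limit in the measures $\mu_N^\pm$ placing mass $N\bigl(\Delta^2 f(k/N)\bigr)^\pm$ at $k/N$. Your route proves (i)$\Rightarrow$(iii) directly and is quantitative at every finite level. Its price is:
\begin{itemize}
\item vague compactness on $(0,1)$, from the weighted bounds supplied by $h$ and by $g$;
\item the identification $\mu_N\to f''$ in distributions, by summation by parts against test functions;
\item a lower-semicontinuity argument at moving grid points.
\end{itemize}
The paper's route avoids all of this and produces the concave components in (ii) as a by-product.

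\textbf{The step you flag as hardest.} It is easier than you fear, because you never need the limit of $\mu_N^+$ to equal $\mu^+$. Suppose $\mu_N^\pm\to\nu^\pm$ vaguely along a common subsequence. Then $f''=\nu^+-\nu^-$, so $\mu^+\le\nu^+$. Fix $x\in[0,1]$, grid points $j_N/N\to x$, and a compactly supported continuous $0\le\psi\le K(x,\cdot)$. Use $|K(x,t)-K(y,t)|\le|x-y|$, $K\ge0$, and the bounded mass of $\mu_N^+$ on the support of $\psi$. These give $\int\psi\,d\nu^+\le\liminf_N\int K(j_N/N,\cdot)\,d\mu_N^+\le\osc(h)$. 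Taking the supremum over $\psi$ gives $\int K(x,\cdot)\,d\mu^+\le\osc(h)$, and the dominated-mass inequality is all you need.

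\textbf{Smaller corrections.}
\begin{itemize}
\item In your $A,B,C$ inequality the $h$-bracket should enter with a plus sign.
\item Since $x\mapsto K(x,t)$ has second distributional derivative equal to minus the unit point mass at $t$, the representation reads $f=\ell-\int K\,d\mu^++\int K\,d\mu^-$. This is the form your conclusion $h_0=\int K\,d\mu^+$ actually uses.
\item The dominating function for continuity is simply $t(1-t)$, because $0\le K(x,t)\le t(1-t)$.
\item Your (ii)$\Rightarrow$(iii) sketch presupposes convergence of the kernel integral it is meant to establish. Either truncate to compact intervals as the paper does, or drop it, since the cycle (i)$\Rightarrow$(iii)$\Rightarrow$(ii)$\Rightarrow$(i) does not need it.
\item No measure isomorphism is needed: atomlessness alone gives partitions into $N$ cells of measure $1/N$.
\end{itemize}
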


Thus restricting the components to functions of measure does not
increase the optimal oscillation. The weighted curvature condition
also accounts for possible singular behavior near the endpoints:
the unweighted total variation of $f''$ need not be finite.
This raises the question of whether continuity of the profile is
itself necessary. The next proposition shows that a bounded
submodular difference forces the interior profile to have a
continuous DC extension to $[0,1]$, while leaving the original
endpoint values unrestricted.
\begin{proposition}\label{prop:automatic}
Let $(X,\A,m)$ be an atomless probability space. For an
arbitrary real-valued $f:[0,1]\to\R$, the function $f\circ m$ is a
difference of two bounded submodular functions if and only if
$f|_{(0,1)}$ agrees with the restriction of a continuous DC function
on $[0,1]$. The two endpoint values of $f$ are unrestricted.
\end{proposition}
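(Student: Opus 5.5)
We prove the two directions separately, using Theorem~\ref{thm:continuous} as the main tool and handling the endpoints by hand.

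\emph{Sufficiency.} Suppose $f|_{(0,1)}=\tilde f|_{(0,1)}$ with $\tilde f$ continuous and DC on $[0,1]$. By Theorem~\ref{thm:continuous}, $\tilde f\circ m=\tilde g-\tilde h$ with $\tilde g,\tilde h$ bounded and submodular. Then $f\circ m$ differs from $\tilde f\circ m$ only on sets of measure $0$ or $1$, so
\[
f\circ m=\tilde f\circ m+a\,\ind_{\{m=0\}}+b\,\ind_{\{m=1\}},
\]
where $a=f(0)-\tilde f(0)$ and $b=f(1)-\tilde f(1)$. It therefore suffices to show that $N_0(A)=\ind_{\{m(A)=0\}}$ and $N_1(A)=\ind_{\{m(A)=1\}}$ are bounded submodular differences.

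For $N_1$, check submodularity directly. If $m(X\cap Y)=1$, then $m(X)=m(Y)=1$. If $m(X\cup Y)=1$ but $m(X\cap Y)<1$, the right-hand side of the submodular inequality is $1$, and we need $N_1(X)+N_1(Y)\ge1$. This fails when $m(X),m(Y)<1$ and $m(X\cup Y)=1$, so $N_1$ is supermodular, not submodular. Hence $-N_1$ is submodular, and $N_1=0-(-N_1)$. Symmetrically, $N_0$ is submodular: if $m(X)=0$ or $m(Y)=0$ the inequality is easily checked, and if both are positive the left side is $0$ while $N_0(X\cup Y)=0$, so the right side is at most $1\cdot N_0(X\cap Y)$. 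That last case needs care, because the right side may equal $1$ while the left side is $0$. So $N_0$ is in fact supermodular as well, and $-N_0$ is submodular. In either case each indicator is a difference of bounded submodular functions with one component zero, which proves sufficiency and leaves the endpoint values unrestricted.

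\emph{Necessity.} Suppose $f\circ m=g-h$ with $g,h$ bounded and submodular. The goal is to reduce to the continuous case. The step I expect to be the main obstacle is showing that $f$ is continuous on $(0,1)$ without assuming it. My plan is to replay the necessity argument of Theorem~\ref{thm:continuous} in a form that never uses continuity. Fix disjoint sets and consider chains $A_0\subset A_1\subset\cdots$ built from a measure-preserving copy of $[0,1]$ inside $X$, which exists by atomlessness. Averaging $g$ and $h$ over measure-preserving rearrangements (an invariant mean or finite permutations of dyadic blocks) yields bounded submodular functions of $m(A)$ alone, at least on dyadic measures. These give profiles $G,H$ with $f=G-H$ on dyadic rationals in $(0,1)$. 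Submodularity of a function of measure forces its profile to be midpoint concave on the interior, hence concave on dyadics, and the profile is bounded. A bounded concave function on the dyadics of $(0,1)$ extends uniquely to a continuous concave function on $(0,1)$. A bounded concave function on $(0,1)$ also has finite limits at $0$ and $1$, which gives the continuous extensions $\tilde G,\tilde H$ on $[0,1]$.

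The remaining point is to show that $f$ agrees with $\tilde G-\tilde H$ at non-dyadic interior points. For this I would perturb the averaging so that any given interior value $x$ is included, using blocks of measure $x/n$ and $(1-x)/n$. Alternatively, one can show directly that $f$ is continuous on $(0,1)$, using the two-sided submodular estimates on $g$ and $h$ along a chain of sets passing through measure $x$. The resulting function $\tilde f=\tilde G-\tilde H$ is a continuous DC function on $[0,1]$ with $\tilde f|_{(0,1)}=f|_{(0,1)}$. The values of $f$ at $0$ and $1$ are never used, which is consistent with the sufficiency part.
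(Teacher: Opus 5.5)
There is a genuine gap, and it sits in the necessity direction at exactly the step you flag as the main obstacle.

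\textbf{Sufficiency.} Your argument is essentially the paper's endpoint correction: $-\ind_{\{m(A)=i\}}=q_i(m(A))$ with $q_i=-\ind_{\{t=i\}}$ concave. The conclusion is right, but the verification is muddled. That $N_1$ fails submodularity does not make it supermodular; you must check the supermodular inequality, whose only nontrivial case $m(X)=m(Y)=1$ forces $m(X\cap Y)=1$. For $N_0$ you first assert submodularity and then retract it.

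\textbf{Necessity: the gap.} You need to show that $f$ coincides with $\tilde G-\tilde H$ off the dyadics, i.e.\ that $f$ is continuous on $(0,1)$. Neither suggested route gives this.
\begin{enumerate}
\item Submodularity says nothing along a chain: for $X\subseteq Y$ the inequality is an identity. So there are no ``two-sided estimates along a chain'' unless you average over a symmetric family of sets.
\item Re-averaging over $n$ blocks of measure $x/n$ and $n$ blocks of measure $(1-x)/n$ fails for two reasons. First, the new profiles are unrelated to your dyadic $\tilde G,\tilde H$: averages over different finite algebras are not compatible, and only their differences are fixed by $f$. Second, for irrational $x$, measure $x$ corresponds only to the corner configuration $(n,0)$, where concavity controls neighbouring increments from one side only. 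Concretely, on that block algebra take $g(A)=M\,\ind_{\{A\text{ contains a block of measure }(1-x)/n\}}$ and $h=0$. Then $g$ is submodular, depends only on $m(A)$, and jumps by $M$ between measures $x$ and $x+(1-x)/n$ for every $n$. So no continuity estimate can come from that configuration alone.
\end{enumerate}
A smaller point: finite permutation averages at different dyadic levels are likewise incompatible. Your invariant-mean variant does work, since the union of the dyadic permutation groups is locally finite and hence amenable.

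\textbf{The missing idea.} Put an arbitrary interior pair $x<y$ on a single grid of step $y-x$ that extends a fixed distance beyond both points.
\begin{enumerate}
\item For $\varepsilon\le x<y\le1-\varepsilon$ with $d=y-x\le\varepsilon/2$, write $x=a+kd$ with $0\le a<d$ and let $N=\lfloor(1-a)/d\rfloor$.
\item Choose disjoint $B_0,B_1,\ldots,B_N$ with $m(B_0)=a$ and $m(B_i)=d$. Average $S\mapsto g(B_0\cup\bigcup_{i\in S}B_i)$, and the corresponding function for $h$, over $\Sym([N])$.
\item This gives concave sequences $(G_j),(H_j)$ with $G_j-H_j=f(a+jd)$ and oscillations at most $\osc(g),\osc(h)$.
\item For either sequence $Q$, nonincreasing increments give $-\osc(Q)/(N-k)\le Q_{k+1}-Q_k\le\osc(Q)/(k+1)$.
\item Since $(k+1)d>\varepsilon$ and $(N-k)d\ge\varepsilon/2$, this yields $|f(y)-f(x)|\le2(\osc(g)+\osc(h))|y-x|/\varepsilon$.
\end{enumerate}
Hence $f$ is locally Lipschitz on $(0,1)$. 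Only with this in hand does agreement with a continuous DC function on a dense set extend to all of $(0,1)$. The dense set can be your dyadics, or the paper's equal-partition interpolants passed to the limit by Arzel\`a--Ascoli.
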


Having characterized the admissible profiles without assuming
continuity, we next examine how much regularity the counterexamples
can have. Theorem~\ref{thm:smooth} strengthens the construction in
Theorem~\ref{thm:main}: it gives a single increasing profile that
belongs to every $C^{1,\alpha}$, $0<\alpha<1$, has a log-Lipschitz
derivative, and is smooth away from one interior point. The
associated set function still has variation one and admits no
bounded submodular difference. By contrast, every $C^{1,1}$ profile
is DC. The obstruction also has a finite formulation, which allows
us to construct counterexamples on countable ground sets.
A sharp lower bound arising from a single positive second
difference already excludes the profiles $(2t-1)_+^\alpha$,
$0<\alpha<1$, where $x_+=\max\{x,0\}$. The exact finite oscillation
formula in Theorem~\ref{thm:finite} then yields a summability
criterion for functions built from disjoint finite blocks
(Theorem~\ref{thm:blocks}). In particular, there are increasing
functions of variation one on $2^J$, with $J$ countable, that are
continuous in the product topology on $2^J\cong\{0,1\}^J$ but
admit no bounded submodular difference.

We now turn from the existence of bounded submodular differences
to the optimization of monotonic sum-decompositions, as posed in
the planar-graph question. Let $G=(V,E)$ be a finite simple graph
with nonnegative edge weights $w:E\to\R_{\ge0}$. As in~\cite{BGILS},
write $w(F)=\sum_{e\in F}w(e)$ for $F\subseteq E$, and let
$E[S,V\setminus S]$ denote the edges joining $S$ to its complement.
The weighted cut function is $d_w(S)=w(E[S,V\setminus S])$ for
$S\subseteq V$; it is submodular and satisfies
$d_w(S)=d_w(V\setminus S)$. We write $w_e=w(e)$ and $W=w(E)$,
and define its optimal monotonic sum-decomposition value by
\[
 N_G(w)=\|d_w\|_+
 =\min\{u(V):d_w=u+v
       \text{ is a monotonic sum-decomposition}\}.
\]
Here $\|d_w\|_+$ is the notation used in~\cite{BGILS}, and attainment
of the minimum is verified in Section~\ref{sec:prelim}. For unit
weights, we write $d_G=d_{\mathbf 1}$ and $N_G=N_G(\mathbf 1)$.
The edge-by-edge decomposition gives $N_G(w)\le W$, whereas every
admissible pair satisfies $d_w(S)\le u(S)\le u(V)$. Hence
$\max_{S\subseteq V}d_w(S)\le N_G(w)\le W$. Fractional triangle
packings provide a systematic improvement of the upper bound.
Let $\T=\T(G)$ be the family of vertex sets of triangles, with
each edge identified with its two endpoints. A \emph{fractional
triangle packing} is a family $(p_T)_{T\in\T}$ of nonnegative
numbers satisfying $\sum_{T\supseteq e}p_T\le w_e$ for every
$e\in E$. Its maximum total weight is
\begin{equation}\label{eq:packing}
 \nu_w^*(G)=\max\left\{\sum_{T\in\T}p_T:
 p_T\ge0\ (T\in\T),\quad
 \sum_{T\supseteq e}p_T\le w_e\ (e\in E)\right\}.
\end{equation}
The feasible region is nonempty and compact, and the optimum is
zero when $\T$ is empty. In the unweighted case, we write
$\nu^*(G)=\nu_{\mathbf 1}^*(G)$. B\'erczi et al. proved that
$N_G(w)=W$ for triangle-free graphs and, more generally, that
$N_G(w)\le W-\nu_w^*(G)$
\cite[Theorem~7.4 and Corollary~7.8]{BGILS}. They also determined
the optimum for certain unweighted planar graphs
\cite[Examples~7.9--7.10]{BGILS}, leaving the general planar case
open. Our next theorem resolves this question by characterizing
all graphs for which the packing bound is attained for every
nonnegative weighting. As usual, $K_r$-free means containing no
complete subgraph on $r$ vertices.
\begin{theorem}\label{thm:graphs}
Let $G=(V,E)$ be a finite simple graph. The identity
\begin{equation}\label{eq:graphformula}
 N_G(w)=W-\nu_w^*(G)
\end{equation}
holds for every $w\in\R_{\ge0}^E$ if and only if $G$ contains no
$K_5$ subgraph.

Moreover, if $G$ is $K_5$-free and $w$ is a fixed nonnegative
weighting, then maximum fractional triangle packings are in
bijection with the increasing submodular functions $f:2^V\to\R$
satisfying $f(\varnothing)=0$, $f(V)=N_G(w)$, and
$d_w(S)=f(S)+f(V\setminus S)-f(V)$ for every $S\subseteq V$.
Every optimal monotonic sum-decomposition can be symmetrized
to one of these functions without changing its value.
\end{theorem}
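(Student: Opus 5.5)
The plan is to pass to a symmetric normal form, realize every fractional triangle packing as a symmetric component of value $W-\sum_T p_T$, prove a matching lower bound on $K_5$-free graphs, and exhibit an explicit $K_5$ counterexample.

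\emph{Symmetrization.} Let $d_w=u+v$ be a monotonic sum-decomposition. Put $g(S)=v(V\setminus S)-v(V)$. Complementation preserves submodularity, so $g$ is increasing and submodular with $g(\varnothing)=0$. Since $d_w(V)=0$, we have $v(V)=-u(V)$, hence $g(V)=u(V)$ and
\[
 d_w(S)=u(S)+g(V\setminus S)-u(V).
\]
Applying this also to $V\setminus S$ and using $d_w(S)=d_w(V\setminus S)$ shows that $f=\tfrac12(u+g)$ is increasing and submodular, satisfies $f(\varnothing)=0$ and $f(V)=u(V)$, and obeys
\[
 d_w(S)=f(S)+f(V\setminus S)-f(V).
\]
Conversely, such an $f$ gives the decomposition $u=f$, $v(S)=f(V\setminus S)-f(V)$. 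Hence $N_G(w)=\min f(V)$ over this class of symmetric components, and the last sentence of the theorem follows.

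\emph{Packings give components.} Given a packing $p$, write $p(e)=\sum_{T\supseteq e}p_T$ and define
\[
 f_p(S)=\sum_{e\in E}\bigl(w_e-p(e)\bigr)\ind[S\cap e\ne\varnothing]+\sum_{T\in\T}2p_T\,\ind[S\cap T\ne\varnothing].
\]
Each coverage function $\ind[S\cap X\ne\varnothing]$ is increasing and submodular, and for it $h(S)+h(V\setminus S)-h(V)=\ind[X\text{ is split by }S]$. An edge is split exactly when it is cut, and a split triangle has exactly two cut edges. Therefore the identity holds for $f_p$, because $2p_T$ accounts for the contribution $p_T$ on each of the two cut edges of $T$. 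Evaluating at $V$ gives
\[
 f_p(V)=W-3\textstyle\sum_T p_T+2\sum_T p_T=W-\sum_T p_T.
\]
This reproves the upper bound $N_G(w)\le W-\nu_w^*(G)$, and the map $p\mapsto f_p$ is injective, since the coefficients $p_T$ can be read off from the Möbius coefficients of $f_p$ on the triangles.

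\emph{Lower bound and surjectivity (main obstacle).} Expand an admissible $f$ in the coverage basis, $f(S)=\sum_{X\ne\varnothing}c_X\ind[S\cap X\ne\varnothing]$, which is always possible via Möbius inversion of $f(V)-f(V\setminus S)$. The identity becomes $d_w(S)=\sum_X c_X\ind[X\text{ split}]$, so the split-function combination $\sum_X c_X\ind[X\text{ split}]$ equals the cut function. Moreover $f(V)=\sum_X c_X$, and increasing submodularity of $f$ becomes sign conditions on sums of the $c_X$ over upward-closed families.
\begin{itemize}
\item I would compare the split functions of sets of size $\ge3$ with the cut functions of their complete graphs. The split function of $X$ is a combination of $K_X$-cuts only for $|X|\le 3$: for a triangle it equals half the triangle's cut function, while for $|X|\ge4$ it is not.
\item Projecting $d_w$ onto the span of these functions should then force $c_X\ge0$ for the triangle terms after an averaging argument, and set $p_T=c_T/2$.
\item I expect the hard step to be showing that contributions from sets $X$ with $|X|\ge4$ cannot lower $f(V)$ unless $X$ contains a $K_5$. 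In $K_4$-free parts these $X$ are non-cliques, so their split functions involve non-edges that must cancel. In $K_4$ the relevant $4$-set should give no gain. I would try an LP dual certificate: a probability distribution on cuts $S$ and weights on chains, testing $f(V)$ against $\sum_e w_e y_e-\sum_T p_T$, built locally from each $K_4$. This yields $f(V)\ge W-\nu_w^*$ when no $K_5$ is present, with equality forcing $f=f_p$, which gives the bijection.
\end{itemize}

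\emph{Necessity.} If $G\supseteq K_5$, put weight $1$ on the edges of one $K_5$ and weight $0$ elsewhere, which reduces to $G=K_5$ with unit weights. There $\nu^*=10/3$, so $W-\nu^*=20/3$. I would exhibit the symmetric $f$ with $c_V$ on the full $5$-set plus coverage terms on edges and triangles, chosen so that $f(V)<20/3$ (e.g.\ $f(S)=g(|S|)$ with a suitable concave $g$), and check submodularity and monotonicity directly.
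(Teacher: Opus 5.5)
\paragraph{Assessment.} Your symmetrization step, the packing construction (upper bound and injectivity of $p\mapsto f_p$), and the necessity plan are sound.

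For necessity, take the profile $g=(0,4,6,6,6,6)$. It is increasing and concave, and it satisfies $g(k)+g(5-k)-g(5)=k(5-k)$, so $N_{K_5}\le 6<20/3$. Then set $u(S)=u_0(S\cap U)$; together with the zero-weight edges, which also give $\nu_w^*(G)=10/3$, this handles any $G\supseteq K_5$.

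The genuine gap is the lower bound $f(V)\ge W-\nu_w^*(G)$ for every admissible $f$ on a $K_5$-free graph. This is the substance of the theorem, and its equality case is what gives surjectivity of $p\mapsto f_p$ onto the optimal components. Your bullets only record expectations, and the certificate on cuts and chains is never constructed. One heuristic is also false: admissible $f$ need not have nonnegative triangle coefficients. On unit-weight $K_3$, the cardinality function with profile $(0,2,\tfrac{7}{2},\tfrac{7}{2})$ is admissible, yet its coverage coefficient on the triangle is $-1$. So the bound cannot come from forcing signs. It must pass through positive parts, with signs fixed only at equality.

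\paragraph{What is missing.} Two exact coefficient arguments are needed, after which no certificate is required.

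First, if $ij\notin E$ then $\Delta_{ij}d_w\equiv0$. Submodularity of both $f$ and $S\mapsto f(V\setminus S)-f(V)$ then forces $\Delta_{ij}f\equiv0$. Since $\Delta_{ij}f(S)=-\sum_{X\supseteq\{i,j\},\,X\cap S=\varnothing}c_X$, M\"obius inversion gives $c_X=0$ for every $X\supseteq\{i,j\}$. This clique support comes from two-sided submodularity, not from the cut identity. Indeed, adding $x_ax_bx_c-\tfrac12(x_ax_b+x_ax_c+x_bx_c)$, for an arbitrary triple, preserves the identity. On a $K_5$-free graph, only cliques of size at most four survive.

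Second, the split function $1-\prod_{i\in X}x_i-\prod_{i\in X}(1-x_i)$ of a $4$-set has quartic coefficient $-2$, while $d_w$ is quadratic. Hence $c_X=0$ for $|X|=4$. For $|X|=5$ the two top coefficients cancel, which is where this argument stops working.

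With only vertices, edges and triangles left, the bound follows in three steps:
\begin{enumerate}
\item Compare edge terms to get $w_e=c_e+\tfrac12\sum_{T\supseteq e}c_T$.
\item Evaluate $\Delta_{ij}f(S)=-c_e-\sum_{T\supseteq e,\,T\cap S=\varnothing}c_T\le0$ with $S$ the set of third vertices of triangles on $e$ having $c_T>0$. This gives $\sum_{T\supseteq e}|c_T|\le 2w_e$.
\item Use $c_i=f(V)-f(V\setminus\{i\})\ge0$.
\end{enumerate}
Then $p_T=(c_T)_+/2$ is a feasible packing, and
\[
 f(V)=W+\sum_i c_i-\tfrac12\sum_T c_T\ \ge\ W-\sum_T p_T\ \ge\ W-\nu_w^*(G).
\]
Equality forces $c_i=0$, $c_T\ge0$, and $\sum_Tp_T=\nu_w^*(G)$, that is, $f=f_p$ for a maximum packing. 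This is the paper's argument, which it carries out in the multilinear basis, quoting clique support as a known lemma.
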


Since every planar graph is $K_5$-free, Theorem~\ref{thm:graphs}
answers the planar-graph question, including its weighted version.
The necessity of the graph condition is already witnessed by the
unweighted $K_5$, for which $N_{K_5}=6$ but
$W-\nu^*(K_5)=20/3$. Assigning unit weights to a $K_5$ subgraph
and zero weights elsewhere extends this obstruction to every
graph containing $K_5$. On the other hand, for $K_5$-free graphs,
the formula yields both computational and structural consequences.
For rational weights, an optimal packing and its associated
decomposition can be constructed in polynomial time. Linear
programming duality also supplies optimality certificates and
gives $N_G(w')\le N_G(w)$ whenever $0\le w'\le w$, thereby
resolving the edge-weight monotonicity question
of~\cite[Section~8, item~1]{BGILS} on this class. To describe the
optimal decompositions themselves, we must distinguish the
components obtained by complement symmetrization from the original
pairs. Theorem~\ref{thm:graphs} characterizes the former; under
the stronger $K_4$-free assumption, we can recover the latter as
well. For a maximum fractional triangle packing $p$, put
$r_e=w_e-\sum_{T\supseteq e}p_T$, and let
$C_Q(S)=\ind_{\{S\cap Q\ne\varnothing\}}$ be the coverage function
of a nonempty set $Q\subseteq V$. The decomposition associated
with $p$ is given by
\[
 f_p(S)=\sum_{e\in E}r_e C_e(S)+2\sum_{T\in\T}p_T C_T(S),
 \qquad g_p(S)=f_p(V\setminus S)-f_p(V),
\]
as in~\eqref{eq:optimizer}. Let $B$ denote the unsigned vertex-edge
incidence matrix of $G$, so that $(Bz)_i=\sum_{e\ni i}z_e$.
The next theorem describes all optimal pairs as perturbations
of these packing decompositions.

\begin{theorem}\label{thm:allopt}
Let $G=(V,E)$ be a finite simple $K_4$-free graph with nonnegative
edge weights $w$. Every optimal normalized monotonic
sum-decomposition $d_w=u+v$ has the unique form
\[
 u(S)=f_p(S)+\sum_{e\subseteq S}z_e,\qquad
 v(S)=g_p(S)-\sum_{e\subseteq S}z_e,
\]
where $p$ is a maximum fractional triangle packing and
$z\in\R^E$ satisfies $Bz=0$ and $|z_e|\le r_e$ for every $e\in E$.
Conversely, every such pair $(p,z)$ gives an optimal monotonic
sum-decomposition.
\end{theorem}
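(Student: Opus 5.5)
Throughout, take Theorem~\ref{thm:graphs} as given; since $K_4$-free graphs are $K_5$-free, it applies. The plan has three stages: symmetrize, invert the symmetrization to read off $p$, then use $K_4$-freeness to show that $u-f_p$ is a pairwise edge term.

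\emph{Symmetrize.} Let $d_w=u+v$ be an optimal normalized monotonic sum-decomposition, and set
\[
\bar u(S)=\tfrac12\bigl(u(S)+u(V)-v(V\setminus S)-u(V)\bigr)+\tfrac12 u(V)=\tfrac12\bigl(u(S)-v(V\setminus S)\bigr)+\tfrac12 u(V),
\]
or whatever symmetrization Theorem~\ref{thm:graphs} uses. It is increasing and submodular, has $\bar u(V)=N_G(w)$, and satisfies $d_w(S)=\bar u(S)+\bar u(V\setminus S)-\bar u(V)$. By the bijection in Theorem~\ref{thm:graphs}, $\bar u=f_p$ for a unique maximum packing $p$. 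Put $\delta=u-f_p$. Antisymmetry of the symmetrization gives
\[
\delta(S)=-\bigl(v(V\setminus S)-g_p(V\setminus S)\bigr),
\]
so $v=g_p-\delta'$ with $\delta'(S)=\delta(S)$ once we use $u+v=d_w=f_p+g_p$. Indeed $v=d_w-u=f_p+g_p-f_p-\delta=g_p-\delta$, so the form of $v$ is automatic. Symmetry also forces $\delta(S)=\delta(V\setminus S)-\delta(V)$, and $\delta(V)=0$ by optimality.

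\emph{Pin down $\delta$.} We must show that $\delta(S)=\sum_{e\subseteq S}z_e$. The idea is to use that $u$ and $f_p$ both have the tight value $N_G(w)$ at $V$, so the LP/duality certificates of Theorem~\ref{thm:graphs} force complementary slackness. Work with the Möbius (multilinear) expansion of $\delta$. Its coefficient on each set $R$ with $|R|\ge2$ is bounded by submodularity constraints of $u$ and $v$, evaluated on the second differences $u(S+i+j)-u(S+i)-u(S+j)+u(S)\le0$ and similarly for $v$. Adding the two conditions gives the second difference of $d_w$, which equals $-2w_{ij}$ if $ij\in E$ and $0$ otherwise. So for a non-edge $ij$, both second differences of $u$ and $v$ vanish at every $S$. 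Hence every Möbius coefficient of $u$ on a set containing a non-adjacent pair is forced. Since $G$ is $K_4$-free, every clique has at most three vertices, so the coefficients of $\delta$ live on edges and triangles.

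The triangle coefficients are where I expect the main obstacle to lie. Here I would compare with $f_p$, whose triangle coefficient is exactly $2p_T$ (from $C_T$). I would then argue that the triangle coefficients of $u$ themselves define a packing $p'$ whose symmetrization equals $p$; since the symmetrized triangle part is unchanged, $p'=p$. Singleton coefficients vanish by normalization combined with the symmetry relation. Thus $\delta(S)=\sum_{e\subseteq S}z_e$. From $\delta(V\setminus\{i\})=\delta(\{i\})-\delta(V)=0$ together with $\delta(V)=0$ we get $\sum_{e\ni i}z_e=0$, i.e.\ $Bz=0$.

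\emph{Constraints and converse.} Monotonicity of $u$ at $S=\{i\}$ versus adding $j$, and of $v$ symmetrically, yields $-r_e\le z_e\le r_e$. One checks the marginal $u(\{i,j\})-u(\{i\})=r_{ij}+(\text{triangle terms})+z_{ij}$, and the extremal test sets are where the triangle coverage is already saturated; this is the place $|z_e|\le r_e$ comes from. Uniqueness follows since $p$ is determined by $\bar u$, and $z$ by the Möbius coefficients. For the converse, submodularity is preserved because edge terms $z_e\ind_{\{e\subseteq S\}}$ contribute second difference $z_e$ only on the pair $e$ itself, where $f_p$ has second difference $-r_e-2\sum_{T\ni e}p_T\le -|z_e|$. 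For monotonicity, the marginal of $u$ at $i$ given $S$ is at least $\sum_{e\ni i,\,e\not\subseteq S+i}r_e+\sum_{j\in S,\,ij\in E}z_{ij}+\dots$. Using $Bz=0$ to rewrite $\sum_{j\in S}z_{ij}=-\sum_{j\notin S}z_{ij}$ and choosing whichever sign is favorable, we bound it by the uncovered $r_e$. The value is $u(V)=f_p(V)+\sum_e z_e=N_G(w)$, since $\sum_e z_e=\tfrac12\mathbf 1^{\top}Bz=0$.
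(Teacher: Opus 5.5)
Your outline (symmetrize, show that $\delta=u-f_p$ satisfies $\delta(S)=\delta(V\setminus S)$ and $\delta(V)=0$, use clique support and $K_4$-freeness to bound the degree by three, then remove the cubic and linear terms) is the paper's. However, the step that removes the singleton coefficients is false as stated. From the linear coefficients of $\delta(x)=\delta(\mathbf 1-x)$, normalization and complement invariance give only $2\alpha_i+(Bz)_i=0$. For an edge $ij$, the function $\delta(x)=t(x_i+x_j-2x_ix_j)$ is normalized, complement-invariant and clique-supported, yet $\alpha_i=t$. On the one-edge graph of weight $w$, the pair $u=f_p+\delta$, $v=g_p-\delta$ is normalized, submodular for $|t|\le w/2$, sums to $d_w$, and has $u(V)=w=N_G(w)$. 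So only monotonicity can exclude $t\ne0$, and the paper uses exactly this. Since $f_p(\{i\})=\deg_w(i)=d_w(\{i\})$, $f_p(V\setminus\{i\})=f_p(V)$ and $\delta(V\setminus\{i\})=\delta(\{i\})=\alpha_i$, one gets $0\ge v(\{i\})-v(\varnothing)=-\alpha_i$ and $0\le u(V)-u(V\setminus\{i\})=-\alpha_i$. Your derivation of $Bz=0$ uses $\delta(\{i\})=0$, so at present it rests on the false step. By contrast, the cubic step, which you flag as the main obstacle and leave as an undefined ``symmetrization of a packing'', follows at once from the invariance you already have. Since $\delta$ has degree at most three, the coefficient of $x_ix_jx_k$ in $\delta(\mathbf 1-x)$ is $-c_T$, so $c_T=0$. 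Your displayed symmetrization also has the wrong sign on $v(V\setminus S)$; the correct one is $\tfrac12(u(S)+v(V\setminus S)-v(V))$.

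The box constraints are also derived incorrectly. The marginal $u(\{i,j\})-u(\{i\})$ contains no $r_{ij}$, because the edge $ij$ is already covered by $i$. It equals $\sum_{e'\ni j,\,e'\ne ij}r_{e'}+2\sum_{T\ni j,\,i\notin T}p_T+z_{ij}$. Its nonnegativity bounds $z_{ij}$ from below only by quantities attached to other edges and triangles, and gives no upper bound. The paper gets $|z_e|\le r_e$ from submodularity of both $u$ and $v=d_w-u$, namely $-2w_e\le\Delta_{ij}u(S)\le0$. Here $\Delta_{ij}u(S)=-w_e-P_e+2\sum_{k\in S,\,\{i,j,k\}\in\T}p_{\{i,j,k\}}+z_e$ with $P_e=\sum_{T\supseteq e}p_T$, evaluated at $S=\varnothing$ and $S=V\setminus\{i,j\}$. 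Monotonicity also works with the right test sets: $v(\{i,j\})-v(\{i\})=-r_e-z_e$, and, using $Bz=0$, $u(V\setminus\{i\})-u(V\setminus\{i,j\})=r_e-z_e$.

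The same point affects your converse. $\Delta_{ij}f_p(S)$ is not constant; it ranges over $[-r_e-2P_e,-r_e]$. Submodularity of $u$ therefore needs the maximum, $-r_e\le -z_e$. Submodularity of $v$ needs the minimum, $-r_e-2P_e+z_e\ge-2w_e$, which you do not check.

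Your direct marginal argument for monotonicity in the converse is correct: you rewrite $\sum_{j\in S}z_{ij}=-\sum_{j\notin S}z_{ij}$ via $Bz=0$ and bound it by the uncovered residuals. It is a valid alternative to the paper's appeal to diminishing returns from the marginals at $V$ and at $\varnothing$.
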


Thus the maximum packing determines the symmetrized components,
while the vector $z$ describes the remaining freedom in the
original pair. This parametrization gives the full polytope of
optimal decompositions and yields the uniqueness criterion in
Corollary~\ref{cor:uniqueness}. It also connects the classification
to the deformation cones of graphical zonotopes studied by
Padrol, Pilaud, and Poullot~\cite{PPP} and by
Poullot~\cite{Poullot}, whose work describes the rays of these
cones for $K_4$-free graphs. In our setting, complement
symmetrization imposes an additional identity that eliminates
quartic terms. This explains why the optimal-value formula
extends to $K_5$-free graphs, whereas the above description of
all optimal pairs requires the stronger $K_4$-free hypothesis.


The rest of the paper is organized as follows. {
Section~\ref{sec:prelim} collects the standard tools, develops the averaging arguments, and proves the exact finite oscillation formula.
} Section~\ref{sec:diff} treats bounded submodular differences, including the scalar criterion, its quantitative form, and the regularity and countable-block
examples. Section~\ref{sec:cuts} proves the graph formula and its
dual and computational consequences, and then classifies the
optimal decompositions on $K_4$-free graphs.

\section{Preliminaries}\label{sec:prelim}

Throughout, set algebras contain their ground sets and are closed
under complements and finite unions, and all set functions are
real-valued. For $\varphi:\A\to\R$, write
$\|\varphi\|_\infty=\sup_{A\in\A}|\varphi(A)|$; thus $\varphi$ is bounded
if and only if $\|\varphi\|_\infty<\infty$. We also recall that a \emph{monotonic
difference decomposition}, called a monotonic diff-decomposition, is a representation $\varphi=g-h$ in which $g,h$ are increasing and submodular and satisfy
$g(\varnothing)=h(\varnothing)=0$. Every such decomposition is a
bounded submodular difference, since an increasing real-valued
function on $\A$ is bounded between its values at $\varnothing$
and $J$. More generally, the chain variation of $\varphi$ is
\[
 \Var(\varphi)=
 \sup_{\varnothing=A_0\subseteq A_1\subseteq\cdots\subseteq A_r=J}
 \sum_{i=1}^r|\varphi(A_i)-\varphi(A_{i-1})|,
\]
where the supremum is over finite chains in $\A$. If $\varphi$ is
increasing, each sum telescopes, so
$\Var(\varphi)=\varphi(J)-\varphi(\varnothing)$. We write $\ind_{\mathcal E}$ for
the indicator of a condition $\mathcal E$ and
$[N]=\{1,\ldots,N\}$.

{
We first collect the standard facts used in the proofs below.
For distinct $i,j\in J$ and $S\subseteq J\setminus\{i,j\}$, write
$\Delta_{ij}\varphi(S)=\varphi(S\cup\{i,j\})-\varphi(S\cup\{i\})-\varphi(S\cup\{j\})+\varphi(S)$.

\begin{lemma}[{\cite[Propositions~2.2--2.3, Sections~6.1 and~6.3, and Appendix~B]{Bach}}]
\label{lem:submodularity}
Let $J$ be a finite set.
\begin{enumerate}
\item A function $\varphi:2^J\to\R$ is submodular if and only if
$\Delta_{ij}\varphi(S)\le0$ for all admissible $i,j,S$.
Equivalently, its marginals satisfy
$\varphi(S\cup\{i\})-\varphi(S)\ge \varphi(T\cup\{i\})-\varphi(T)$ whenever
$S\subseteq T\subseteq J\setminus\{i\}$.
\item A cardinality function $\varphi(S)=F_{|S|}$ is submodular
if and only if the successive increments of $F$ are nonincreasing.
\item Nonnegative linear combinations of submodular functions
are submodular. If $\varphi$ is submodular, then so is
$S\mapsto \varphi(J\setminus S)-\varphi(J)$; this operation reverses
monotonicity. Adding a constant preserves submodularity
and monotonicity.
\item For every nonempty $Q\subseteq J$, the coverage function
$C_Q(S)=\ind_{\{S\cap Q\ne\varnothing\}}$ is increasing,
submodular, and zero at the empty set.
\end{enumerate}
\end{lemma}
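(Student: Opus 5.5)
The lemma collects four standard facts, and I will prove each directly from the definitions rather than lean on the citation.

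For (i), one direction is immediate. If $\varphi$ is submodular, take $X=S\cup\{i\}$ and $Y=S\cup\{j\}$; then $X\cap Y=S$ and $X\cup Y=S\cup\{i,j\}$, so submodularity gives exactly $\Delta_{ij}\varphi(S)\le0$.

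For the converse, I first derive the marginal form. Suppose $S\subseteq T\subseteq J\setminus\{i\}$, and list $T\setminus S=\{j_1,\dots,j_k\}$. Put $S_0=S$ and $S_t=S\cup\{j_1,\dots,j_t\}$. Each inequality $\Delta_{ij_t}\varphi(S_{t-1})\le0$ says that the marginal of $i$ does not increase from $S_{t-1}$ to $S_t$. Chaining these $k$ inequalities gives the diminishing-returns inequality for the pair $S\subseteq T$.

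From diminishing returns I recover full submodularity. Let $X,Y$ be arbitrary and list $X\setminus Y=\{i_1,\dots,i_k\}$. Write
\[
\varphi(X)-\varphi(X\cap Y)=\sum_t\bigl[\varphi((X\cap Y)\cup\{i_1,\dots,i_t\})-\varphi((X\cap Y)\cup\{i_1,\dots,i_{t-1}\})\bigr],
\]
and similarly write $\varphi(X\cup Y)-\varphi(Y)$ as the sum of the same marginals taken over the larger sets $Y\cup\{i_1,\dots,i_{t-1}\}$. Comparing the two sums term by term via diminishing returns gives $\varphi(X)-\varphi(X\cap Y)\ge\varphi(X\cup Y)-\varphi(Y)$, which is submodularity.

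For (ii), apply (i) to $\varphi(S)=F_{|S|}$. With $|S|=k$, the mixed difference is
\[
\Delta_{ij}\varphi(S)=F_{k+2}-2F_{k+1}+F_k .
\]
So $\varphi$ is submodular exactly when all these second differences of $F$ are nonpositive, that is, when the increments $F_{k+1}-F_k$ are nonincreasing in $k$. The only case to note is $|J|\le1$, where no admissible $i,j$ exist and both conditions hold vacuously.

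For (iii), the defining inequality is linear in $\varphi$, so nonnegative combinations of submodular functions are submodular. For the complement operation, set $\psi(S)=\varphi(J\setminus S)-\varphi(J)$. Complementation swaps intersections and unions, since $J\setminus(X\cap Y)=(J\setminus X)\cup(J\setminus Y)$ and $J\setminus(X\cup Y)=(J\setminus X)\cap(J\setminus Y)$. Hence the submodular inequality for $\psi$ at $X,Y$ is exactly the one for $\varphi$ at $J\setminus X,J\setminus Y$, with the constants $-\varphi(J)$ cancelling on both sides. Complementation reverses inclusion, so $\psi$ has the opposite monotonicity to $\varphi$. Adding a constant changes both sides of the submodular inequality equally and leaves all differences, hence monotonicity, unchanged.

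For (iv), $C_Q$ is clearly increasing and satisfies $C_Q(\varnothing)=0$. For submodularity I check the four cases of (i) directly. The mixed difference is nonzero only when $S\cap Q=\varnothing$ and both $i,j\in Q$, and in that case it equals $1-1-1+0=-1$. In every other case it is $0$.

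Nothing here is a real obstacle. The only place needing care is the telescoping in (i), where the marginals must be compared along matching element orders: the marginals for $\varphi(X)-\varphi(X\cap Y)$ and for $\varphi(X\cup Y)-\varphi(Y)$ must add the elements $i_1,\dots,i_k$ in the same order, one to subsets of $X$ and the other to supersets of $Y$.
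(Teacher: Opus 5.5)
Your proof is correct in all four parts. In (i) the cycle submodular $\Rightarrow$ $\Delta_{ij}\varphi\le 0$ $\Rightarrow$ diminishing returns $\Rightarrow$ submodular closes properly. The cardinality case uses every size $k\in\{0,\dots,|J|-2\}$. The complement and coverage computations also check out.

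The paper gives no proof of this lemma and simply cites Bach. The cited first- and second-order difference characterizations, cardinality and coverage examples, and closure properties are proved there by essentially the same telescoping and case checks you use, so your version just makes the lemma self-contained.
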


The corresponding composition rule on a set algebra will be
used for continuous profiles and for endpoint corrections.

\begin{lemma}[{\cite[Example~3.2]{Lovasz}}]\label{lem:concave-measure}
Let $m$ be a probability measure on an algebra $\A$,
and let $f:[0,1]\to\R$ be bounded and concave.
Then $\Phi_f:A\mapsto f(m(A))$ is a bounded submodular function
on $\A$. Continuity of $f$ at the endpoints is not required.
\end{lemma}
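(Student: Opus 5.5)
We prove Lemma~\ref{lem:concave-measure} directly from the definition of submodularity. The plan is to reduce it to a one-variable concavity inequality for $f$ at four points determined by the measure.

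\emph{Boundedness.} A concave function that is bounded on $[0,1]$ has bounded image, so $|\Phi_f(A)|\le\sup_{[0,1]}|f|<\infty$ for every $A\in\A$. Boundedness of $f$ is assumed, so this needs no argument.

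\emph{Submodularity.} Fix $X,Y\in\A$ and put
\[
 a=m(X\cap Y),\quad b=m(X),\quad c=m(Y),\quad d=m(X\cup Y).
\]
By finite additivity of $m$ on the algebra,
\[
 a+d=b+c,\qquad a\le \min\{b,c\}\le\max\{b,c\}\le d .
\]
We must show $f(b)+f(c)\ge f(a)+f(d)$.

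If $a=d$, then $a=b=c=d$ and the inequality is an equality. Otherwise $a<d$, and we write
\[
 b=(1-s)a+sd,\qquad c=(1-s')a+s'd,\qquad s,s'\in[0,1].
\]
From $b+c=a+d$ we get $s+s'=1$. Concavity of $f$ on the whole closed interval $[0,1]$ gives
\[
 f(b)\ge(1-s)f(a)+sf(d),\qquad f(c)\ge sf(a)+(1-s)f(d).
\]
Adding these two inequalities yields $f(b)+f(c)\ge f(a)+f(d)$.

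\emph{Endpoint values.} Here $a$ or $d$ may equal $0$ or $1$, where $f$ need not be continuous. This causes no difficulty, because the concavity inequality $f((1-s)x+sy)\ge(1-s)f(x)+sf(y)$ holds for all $x,y\in[0,1]$ by definition and uses no limits. This is exactly why continuity at the endpoints is not required.

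The argument has no real obstacle. The only point needing care is to avoid any argument through derivatives or slopes, which could fail at the endpoints; the chord formulation above handles the endpoint cases uniformly.
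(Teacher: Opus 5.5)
Your argument is correct and is essentially the standard one the paper relies on. The paper gives no proof of its own: it cites Lov\'asz's Example~3.2 and remarks only that the argument uses additivity and nonnegativity of $m$, which is exactly what your four-point chord argument uses ($a+d=b+c$, $a\le b,c\le d$, weights with $s+s'=1$), and working from the pointwise chord inequality makes explicit why no continuity at $0$ or $1$ is needed.
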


The cited concave-measure argument uses only additivity and
nonnegativity of the measure, so it applies on the stated algebra.
We also recall the scalar regularity fact needed for the
counterexamples.
}

{
For a real-valued function $f$ on $[a,b]$, write
\[
 \Var_{[a,b]}(f)
 =\sup_{a=t_0<\cdots<t_\ell=b}
   \sum_{i=1}^{\ell}|f(t_i)-f(t_{i-1})|,
\]
where the supremum is over all finite partitions of $[a,b]$.

\begin{lemma}[{\cite[p.~707]{Hartman}}]\label{lem:derivative}
If $f\in C^1([0,1])$ is a difference of two continuous
concave functions on $[0,1]$, then $f'$ has bounded variation
on every compact subinterval of $(0,1)$.
\end{lemma}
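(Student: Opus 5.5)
Write $f=g-h$ with $g,h$ continuous and concave on $[0,1]$, and fix a compact interval $[a,b]\subset(0,1)$. The plan is to show that $f'$ is a difference of two monotone functions on $[a,b]$. Since $f\in C^1$, at every point the one-sided derivatives of $f$ coincide and equal $f'$. This gives
\[
 f'=g'_+-h'_+ ,
\]
where $g'_+$ and $h'_+$ denote right derivatives.

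The key steps are as follows.

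\begin{enumerate}
\item \emph{Existence and monotonicity of the one-sided derivatives.} A concave function $\phi$ on $[0,1]$ has finite one-sided derivatives at every interior point. Moreover, $\phi'_+$ is nonincreasing on $(0,1)$. Both facts follow from the monotonicity of the slopes
\[
 \frac{\phi(t)-\phi(s)}{t-s}
\]
along chords.

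\item \emph{Boundedness on $[a,b]$.} For $t\in[a,b]$, the slope comparison gives
\[
 \phi(b')-\phi(b)\le (b'-b)\,\phi'_+(t)\quad\text{for } b<b'<1,
\]
together with the analogous lower bound using a point $a'\in(0,a)$. Hence $\phi'_+$ is bounded on $[a,b]$. Its variation there is then
\[
 \phi'_+(a)-\phi'_+(b)<\infty .
\]

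\item \emph{Conclusion.} On $[a,b]$, $f'=g'_+-h'_+$ is a difference of two bounded monotone functions. Therefore
\[
 \Var_{[a,b]}(f')\le \bigl(g'_+(a)-g'_+(b)\bigr)+\bigl(h'_+(a)-h'_+(b)\bigr).
\]
\end{enumerate}

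The main subtlety is to justify the pointwise identity $f'=g'_+-h'_+$. This requires the right derivatives of $g$ and $h$ to exist individually, which is where step (i) is used. One must also note that $f'$ equals the right derivative of $f$; this holds because $f$ is differentiable. A second point is the restriction to compact subintervals. At the endpoints, the one-sided derivatives of concave functions can be infinite, which is exactly why the statement only claims bounded variation away from $0$ and $1$.
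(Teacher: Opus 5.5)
Your argument is correct. The paper gives no proof of this lemma and simply cites the one-dimensional criterion on p.~707 of Hartman's paper, which characterizes DC functions on an interval as those whose one-sided derivatives have bounded variation on every compact interior subinterval.

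What you have written is the standard self-contained proof of the necessity half of that criterion:
\begin{itemize}
\item chord-slope monotonicity makes $g'_+$ and $h'_+$ finite and nonincreasing on $(0,1)$;
\item differentiability of $f$ then forces $f'=g'_+-h'_+$ there;
\item a difference of two monotone functions that are finite at $a$ and $b$ satisfies $\Var_{[a,b]}(f')\le\bigl(g'_+(a)-g'_+(b)\bigr)+\bigl(h'_+(a)-h'_+(b)\bigr)$.
\end{itemize}
One small wording point in step (ii): the bound coming from a point $a'\in(0,a)$ is an \emph{upper} bound on the derivative, $\phi'_+(t)\le(\phi(a)-\phi(a'))/(a-a')$. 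In any case, boundedness on $[a,b]$ is automatic once $\phi'_+$ is monotone and finite at $a$ and $b$.

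As for what each route buys: the citation also supplies the converse direction, which the paper needs separately for its claim that every $C^{1,1}$ profile is DC. Your route proves only the direction stated here, but it does so elementarily. It also shows that differentiability of $f$ on $(0,1)$ suffices in place of $f\in C^1([0,1])$, and it gives an explicit variation bound in terms of the concave components. Your argument is the pointwise counterpart of an observation the paper itself makes in the proof of Theorem~\ref{thm:continuous}: there, the curvature measure of a continuous concave function is identified with the measure of its nonincreasing right derivative.
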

}

{
We shall also use the classical consequence that every
$C^{1,1}$ profile on $[0,1]$ is DC. To apply the
one-dimensional criterion of~\cite[p.~707]{Hartman}, extend
the profile linearly past both endpoints. The resulting $C^1$
function on $\R$ has locally Lipschitz derivative, so the
criterion applies and its DC representation restricts to $[0,1]$.
}

{
We now apply Lemma~\ref{lem:submodularity}(ii) to cardinality
averages.
} This observation applies to arbitrary submodular
functions after averaging over permutations. Indeed, for
$|J|=N$, the averaged cardinality profile
$F_k=\binom Nk^{-1}\sum_{S\subseteq J:\,|S|=k}\varphi(S)$,
$0\le k\le N$, has nonincreasing increments whenever $\varphi$ is
submodular. We use this fact first to show that a positive second
difference in the averaged profile of $\varphi=g-h$ forces a definite
oscillation in $h$.

\begin{lemma}\label{lem:finite}
Let $J$ be a finite set with $|J|=N\ge2$, and suppose that
$\varphi=g-h$ on $2^J$, where $g,h:2^J\to\R$ are submodular.
For $0\le j\le N$, let
$F_j=\binom Nj^{-1}\sum_{S\subseteq J:\,|S|=j}\varphi(S)$.
If $1\le k\le N-1$ and
$\delta=F_{k+1}-2F_k+F_{k-1}>0$, then
\begin{equation}\label{eq:finitebound}
 \osc(h)\ge\frac{k(N-k)}{N}\,\delta.
\end{equation}
In particular, if $N=2m$ and $k=m$, then
$\osc(h)\ge m\delta/2$. For every admissible pair $N,k$ and
every $\delta>0$, equality is attained by a suitable pair $g,h$.
\end{lemma}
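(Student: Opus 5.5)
The plan is to reduce to cardinality profiles by averaging over subsets of each size, and then to use a one-dimensional concavity estimate. For $0\le j\le N$ I will set $G_j=\binom Nj^{-1}\sum_{|S|=j}g(S)$ and $H_j=\binom Nj^{-1}\sum_{|S|=j}h(S)$, so that $F_j=G_j-H_j$. By the averaging observation following Lemma~\ref{lem:submodularity}(ii), both $(G_j)$ and $(H_j)$ have nonincreasing increments. In particular $G_{k+1}-2G_k+G_{k-1}\le 0$. Hence
\[
H_{k+1}-2H_k+H_{k-1}=(G_{k+1}-2G_k+G_{k-1})-\delta\le-\delta .
\]
So $H$ is a concave sequence with a kink of size at least $\delta$ at $k$.

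Next I will prove the scalar estimate
\[
H_k-\Bigl(\tfrac{N-k}{N}H_0+\tfrac kN H_N\Bigr)\ge \tfrac{k(N-k)}{N}\,\delta .
\]
Write $a$ for the increment $H_k-H_{k-1}$. Concavity gives that every increment before step $k$ is at least $a$. Hence $H_k-H_0\ge ka$. After step $k$, every increment is at most $H_{k+1}-H_k\le a-\delta$. Hence $H_N-H_k\le (N-k)(a-\delta)$. Multiply the first inequality by $(N-k)/N$ and the second by $k/N$, then subtract. The terms in $a$ cancel, and what remains is exactly $\delta\,k(N-k)/N$; this is the discrete Green's function $K$ evaluated at $(k/N,k/N)$, scaled by $N$.

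Since each $H_j$ is an average of values of $h$, every $H_j$ lies between $\inf h$ and $\sup h$. Therefore
\[
\osc(h)\ge H_k-\min\{H_0,H_N\}\ge H_k-\Bigl(\tfrac{N-k}{N}H_0+\tfrac kN H_N\Bigr),
\]
and \eqref{eq:finitebound} follows. Taking $N=2m$ and $k=m$ gives the special case $\osc(h)\ge m\delta/2$.

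For sharpness I will take the tent profile $\psi(j)=\min\{j(N-k),\,k(N-j)\}/N$. Its increments are $(N-k)/N$ up to step $k$ and $-k/N$ afterwards, so they are nonincreasing. Its second difference at $k$ is exactly $-1$, and it vanishes elsewhere. I will then set $h(S)=\delta\,\psi(|S|)$, which is submodular by Lemma~\ref{lem:submodularity}(ii). I will also take $g=0$ and $\varphi=-h$. Then $F_j=-\delta\psi(j)$, whose second difference at $k$ is $\delta$. Meanwhile $\osc(h)=\delta\,\psi(k)=\delta k(N-k)/N$, since $\psi$ vanishes at $0$ and $N$. So equality holds for every admissible $N$, $k$ and $\delta>0$.

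I expect the only delicate point to be the interpolation estimate for concave sequences, which is where the weight $k(N-k)/N$ enters. The reduction by averaging and the extremal example are routine.
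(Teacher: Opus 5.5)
Your proof is correct and follows essentially the same route as the paper: you average over each layer to get concave profiles, compare slopes ($(H_k-H_0)/k\ge d_k$ and $(H_N-H_k)/(N-k)\le d_{k+1}\le d_k-\delta$) to produce the weight $k(N-k)/N$, and use the same tent example $h(S)=\delta(\min\{|S|,k\}-k|S|/N)$ with $g\equiv0$. The one step you cite rather than prove is that the layer averages of a submodular function form a concave sequence; the paper establishes this inside this very proof, by noting that the permutation average $\overline g$ is submodular and constant on each layer and then applying submodularity to $R\cup\{a\}$ and $R\cup\{b\}$.
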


\begin{proof}
Let $\Sym(J)$ be the permutation group of $J$. For
$g:2^J\to\R$, define its permutation average by
$\overline g(S)=\frac1{N!}\sum_{\pi\in\Sym(J)}g(\pi S)$.
If $|S|=j$, then each $j$-element subset of $J$ occurs as
$\pi S$ for exactly $j!(N-j)!$ permutations $\pi$. Consequently,
$\overline g(S)=G_j$, where
$G_j=\binom Nj^{-1}\sum_{A\subseteq J:\,|A|=j}g(A)$.
If $g$ is submodular, applying its defining inequality to
$\pi A,\pi B$ and averaging over $\pi$ gives
\[
 \overline g(A)+\overline g(B)
 \ge \overline g(A\cap B)+\overline g(A\cup B),
\]
because permutations preserve unions and intersections.
Thus $\overline g$ is also submodular. For $1\le j\le N-1$,
choose a set $R$ of size $j-1$ and distinct elements
$a,b\in J\setminus R$. Applying this inequality to
$R\cup\{a\}$ and $R\cup\{b\}$ yields
$2G_j\ge G_{j-1}+G_{j+1}$. The same argument applies to $h$.
Hence the averaged profiles
$(G_j)_{j=0}^N$ and $(H_j)_{j=0}^N$ of $g$ and $h$ are concave
sequences. Linearity of averaging also gives $F_j=G_j-H_j$
for every $j$.

Set $d_j=H_j-H_{j-1}$ for $1\le j\le N$. Concavity of $(H_j)$
means that $d_1\ge\cdots\ge d_N$, while concavity of $(G_j)$
at $k$ gives
$0\ge G_{k+1}-2G_k+G_{k-1}
=\delta+d_{k+1}-d_k$.
Therefore $d_k-d_{k+1}\ge\delta$. Since
$H_k-H_0=\sum_{j=1}^k d_j$ and
$H_N-H_k=\sum_{j=k+1}^N d_j$, monotonicity of the increments
implies $(H_k-H_0)/k\ge d_k$ and
$(H_N-H_k)/(N-k)\le d_{k+1}$. It follows that
\[
 \begin{aligned}
 H_k-\frac{N-k}{N}H_0-\frac{k}{N}H_N
 &=\frac{k(N-k)}{N}
   \left(\frac{H_k-H_0}{k}
         -\frac{H_N-H_k}{N-k}\right)\\
 &\ge\frac{k(N-k)}{N}(d_k-d_{k+1})\\
 &\ge\frac{k(N-k)}{N}\,\delta.
 \end{aligned}
\]
Let $h_{\min}=\min_{S\subseteq J}h(S)$ and
$h_{\max}=\max_{S\subseteq J}h(S)$.
Each $H_j$ is an average of values of $h$, so
$h_{\min}\le H_j\le h_{\max}$.
The same bounds hold for
$\frac{N-k}{N}H_0+\frac{k}{N}H_N$, since its coefficients
are nonnegative and sum to one. Thus the left-hand side above
is at most $h_{\max}-h_{\min}=\osc(h)$, proving
\eqref{eq:finitebound}. Substituting $N=2m$ and $k=m$
gives the stated special case.

To prove sharpness, fix $N,k$ and $\delta>0$, and define
$g\equiv0$, $\varphi=-h$, and
$h(S)=\delta(\min\{|S|,k\}-k|S|/N)$.
The cardinality profile of $h$ is
\[
 H_j=
 \begin{cases}
  \displaystyle\frac{\delta(N-k)}{N}\,j,
       &0\le j\le k,\\[4pt]
  \displaystyle\frac{\delta k}{N}\,(N-j),
       &k\le j\le N.
 \end{cases}
\]
Its successive increments are $\delta(N-k)/N$ for
$1\le j\le k$ and $-\delta k/N$ for $k+1\le j\le N$.
They are nonincreasing, so $h$ is submodular; the constant
function $g$ is also submodular. The drop between the $k$th
and $(k+1)$st increments is $\delta$, and therefore
$F_{k+1}-2F_k+F_{k-1}
=-(H_{k+1}-2H_k+H_{k-1})=\delta$.
Finally, the displayed profile is nonnegative, vanishes at
$j=0,N$, and attains its maximum
$\delta k(N-k)/N$ at $j=k$.
Hence $\osc(h)=\delta k(N-k)/N$, establishing equality
in~\eqref{eq:finitebound} for every admissible $N,k$ and
every $\delta>0$.
\end{proof}

Lemma~\ref{lem:finite} applies to arbitrary submodular components
$g,h$; no symmetry is assumed, and averages taken on different
finite Boolean algebras need not be compatible. Its bound uses
only one positive second difference of the averaged profile.
We now combine all positive second differences to obtain the
exact minimum oscillation when the function being decomposed
depends only on cardinality. For arbitrary functions, the same
argument yields a lower bound in terms of the averaged profile.

Let $J$ be a finite set of size $N\ge2$, and let
$\varphi(S)=F_{|S|}$, where $F_0,\ldots,F_N$ are arbitrary real
numbers. For $1\le k\le N-1$ and $0\le j\le N$, put
\[
 \delta_k=(F_{k+1}-2F_k+F_{k-1})_+,\qquad
 K_N(j,k)=\min\{j,k\}-\frac{jk}{N},
\]
where $x_+=\max\{x,0\}$. The kernel $K_N$ is nonnegative and
vanishes at $j=0,N$. The next theorem expresses the optimal
oscillation as the maximum of the sum of these kernels weighted
by the positive second differences.

\begin{theorem}\label{thm:finite}
Let $J$ be a finite set with $|J|=N\ge2$, and put
$K_N(j,k)=\min\{j,k\}-jk/N$ for $0\le j\le N$ and
$1\le k\le N-1$. For the cardinality function
$\varphi(S)=F_{|S|}$, define
$\delta_k=(F_{k+1}-2F_k+F_{k-1})_+$. Then
\[
 \min_{\substack{\varphi=g-h\\ g,h\ \text{submodular}}}\osc(h)
 =\max_{0\le j\le N}
   \sum_{k=1}^{N-1}K_N(j,k)\delta_k,
\]
and the minimum is attained by cardinality functions $g,h$.
For an arbitrary function $\varphi:2^J\to\R$, let
$F_j=\binom Nj^{-1}\sum_{S\subseteq J:\,|S|=j}\varphi(S)$
and define $\delta_k$ from this averaged profile.
Every submodular difference decomposition $\varphi=g-h$ then satisfies
\[
 \osc(h)\ge
 \max_{0\le j\le N}
 \sum_{k=1}^{N-1}K_N(j,k)\delta_k.
\]
\end{theorem}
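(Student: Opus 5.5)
The plan is to reduce everything to concave sequences via the permutation averaging already used in the proof of Lemma~\ref{lem:finite}, and then to solve an explicit one-dimensional extremal problem.

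\emph{Reduction.} Given $\varphi=g-h$ with $g,h$ submodular, the averaged profiles $(G_j)$ and $(H_j)$ are concave sequences, and $F_j=G_j-H_j$. Moreover $\osc(h)\ge\max_j H_j-\min_j H_j$, since every $H_j$ is an average of values of $h$.

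\emph{Lower bound.} Write $F=G-H$ with $G$ concave. Then the second differences satisfy
\[
\Delta^2 H_k=\Delta^2 G_k-\Delta^2 F_k\le-\Delta^2F_k .
\]
Set $c_k=-\Delta^2H_k\ge0$. This gives $c_k\ge\delta_k$, using concavity of $H$ for the case $\Delta^2F_k\le0$.

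For any sequence on $\{0,\dots,N\}$, the discrete Green identity reads
\[
H_j-\Bigl(1-\tfrac jN\Bigr)H_0-\tfrac jN H_N=\sum_{k=1}^{N-1}K_N(j,k)\,c_k .
\]
I would verify this by checking that $j\mapsto K_N(j,k)$ is the discrete Green function: it vanishes at $j=0,N$ and is linear except for a kink of second difference $-1$ at $j=k$.

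The left side is at most $\osc(h)$, exactly as in Lemma~\ref{lem:finite}. Since $K_N\ge0$ and $c_k\ge\delta_k$, taking the maximum over $j$ gives the stated bound. This argument applies to an arbitrary $\varphi$ via its averaged profile. For cardinality functions, the averaged profile is $F$ itself.

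\emph{Attainment.} Define $H$ as the unique sequence with $H_0=H_N=0$ and $-\Delta^2H_k=\delta_k$, namely $H_j=\sum_k K_N(j,k)\delta_k$. Then put $G=F+H$, so that
\[
\Delta^2G_k=\Delta^2F_k-\delta_k=-(\Delta^2F_k)_-\le0 .
\]
Thus $G$ is concave. By Lemma~\ref{lem:submodularity}(ii), the functions $g(S)=G_{|S|}$ and $h(S)=H_{|S|}$ are submodular, and $g-h=\varphi$.

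Since $H\ge0$ and $H$ vanishes at the endpoints, $\osc(h)=\max_jH_j$, which matches the lower bound. The minimum is therefore attained by cardinality functions.

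\emph{Main obstacle.} The only step needing care is the Green identity and the sign conventions in the lower bound. The quantity being bounded is the deviation of $H_j$ from its chord, not $H_j$ itself; the chord term is absorbed because a convex combination of values of $h$ lies within $[\min h,\max h]$. Everything else is bookkeeping.
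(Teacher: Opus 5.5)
Your proposal is correct and follows essentially the same route as the paper. Both arguments average over permutations to get concave profiles $G,H$ with $F=G-H$, and bound the curvature of $H$ below by $\delta_k$. Both then use the discrete Green identity $H_j-(1-j/N)H_0-(j/N)H_N=\sum_k K_N(j,k)\,c_k$, with the chord term absorbed into $\osc(h)$. Attainment comes from the same cardinality construction $H^*_j=\sum_k K_N(j,k)\delta_k$, $G^*=F+H^*$. The only step you leave implicit is why the Green identity holds: the difference of the two sides has zero second differences and zero endpoint values, so it vanishes identically. The paper spells this out.
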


\begin{proof}
We first prove the lower bound for an arbitrary function $\varphi$.
Suppose that $\varphi=g-h$, with $g,h$ submodular, and let
$G_j,H_j$ be their averages over the $j$-element subsets of $J$.
Averaging over permutations preserves submodularity and replaces
each function by its cardinality average, as shown in the proof
of Lemma~\ref{lem:finite}. Consequently, $(G_j)_{j=0}^N$ and
$(H_j)_{j=0}^N$ are concave sequences, and linearity of averaging
gives $F_j=G_j-H_j$. Define
$\osc(H)=\max_{0\le j\le N}H_j-\min_{0\le j\le N}H_j$.
Each $H_j$ is an average of values of $h$, so
$\min_{S\subseteq J}h(S)\le H_j\le\max_{S\subseteq J}h(S)$
and hence $\osc(H)\le\osc(h)$.
For $1\le k\le N-1$, put
$\kappa_k=2H_k-H_{k-1}-H_{k+1}$.
Concavity of $H$ gives $\kappa_k\ge0$, while concavity of
$G=F+H$ gives
\[
 0\ge G_{k+1}-2G_k+G_{k-1}
   =F_{k+1}-2F_k+F_{k-1}-\kappa_k.
\]
Thus $\kappa_k\ge\delta_k$ for every $k$.

To express $H$ in terms of these quantities, observe that
\[
 K_N(j,k)=
 \begin{cases}
  \displaystyle\frac{j(N-k)}{N},&0\le j\le k,\\[4pt]
  \displaystyle\frac{k(N-j)}{N},&k\le j\le N.
 \end{cases}
\]
In particular, $K_N(j,k)\ge0$ and
$K_N(0,k)=K_N(N,k)=0$.
For fixed $k$, its successive increments in $j$ are
$(N-k)/N$ up to index $k$ and $-k/N$ thereafter.
Their only drop is therefore a drop of one at $k$, which gives
\[
 2K_N(j,k)-K_N(j-1,k)-K_N(j+1,k)
 =\ind_{\{j=k\}}
 \qquad(1\le j\le N-1).
\]
Let $L_j=(1-j/N)H_0+(j/N)H_N$ and
$Q_j=\sum_{k=1}^{N-1}K_N(j,k)\kappa_k$.
Both $H-L$ and $Q$ vanish at $j=0,N$, and the preceding
identity shows that both have negative second difference
$\kappa_j$ at each interior index $j$.
Their difference $D_j=H_j-L_j-Q_j$ consequently satisfies
$D_{j+1}-D_j=D_j-D_{j-1}$ for $1\le j\le N-1$.
Its increments are constant, and $D_0=D_N=0$ forces this
constant to be zero. Hence
\[
 H_j-L_j=\sum_{k=1}^{N-1}K_N(j,k)\kappa_k
 \qquad(0\le j\le N).
\]
Since $L_j$ is a convex combination of $H_0,H_N$, it lies
between the minimum and maximum entries of $H$. Therefore
$H_j-L_j\le\osc(H)$, and nonnegativity of the kernel gives
\[
 \osc(h)\ge\osc(H)\ge H_j-L_j
 =\sum_{k=1}^{N-1}K_N(j,k)\kappa_k
 \ge\sum_{k=1}^{N-1}K_N(j,k)\delta_k.
\]
Taking the maximum over $j$ proves the lower bound, including
the assertion for arbitrary $\varphi$.

Now suppose that $\varphi(S)=F_{|S|}$. Define
\[
 H_j^*=\sum_{k=1}^{N-1}K_N(j,k)\delta_k,
 \qquad G_j^*=F_j+H_j^*
 \qquad(0\le j\le N).
\]
The kernel identity implies
$H_{j+1}^*-2H_j^*+H_{j-1}^*=-\delta_j\le0$
for $1\le j\le N-1$. Moreover,
\[
 G_{j+1}^*-2G_j^*+G_{j-1}^*
 =F_{j+1}-2F_j+F_{j-1}-\delta_j
 =\min\{F_{j+1}-2F_j+F_{j-1},0\}\le0.
\]
Thus both profiles are concave, so the cardinality functions
$h(S)=H_{|S|}^*$ and $g(S)=G_{|S|}^*$ are submodular.
By construction, $g(S)-h(S)=F_{|S|}=\varphi(S)$ for every
$S\subseteq J$. Finally, nonnegativity and the endpoint values
of the kernel give $H_j^*\ge0$ and $H_0^*=H_N^*=0$.
Since every cardinality $0,\ldots,N$ occurs among subsets of $J$,
\[
 \osc(h)=\max_{0\le j\le N}H_j^*
 =\max_{0\le j\le N}
   \sum_{k=1}^{N-1}K_N(j,k)\delta_k.
\]
This decomposition attains the lower bound and proves the
claimed minimum formula.
\end{proof}

Theorem~\ref{thm:finite} recovers Lemma~\ref{lem:finite} by
retaining only the $k$th summand and evaluating at $j=k$, since
$K_N(k,k)=k(N-k)/N$. Combining several positive second differences
can yield a strictly stronger bound. For example, if $N=4$ and
$(F_0,\ldots,F_4)=(0,0,1,2,4)$, then
$(\delta_1,\delta_2,\delta_3)=(1,0,1)$.
The best bound obtained from Lemma~\ref{lem:finite} is $3/4$,
whereas $K_4(j,1)+K_4(j,3)=1$ for $j=1,2,3$, so
Theorem~\ref{thm:finite} gives the exact cost $1$.

We next develop the averaging argument for graph cuts.
Here the relevant symmetry is complementation, which allows us
to represent a monotonic sum-decomposition by a single increasing
submodular function without changing its objective value.
Before establishing this reduction, we verify that the minimum
defining $N_G(w)$ is attained. Retaining the notation of the
introduction, each weighted edge cut has the monotonic
sum-decomposition
$w_e\ind_{\{S\cap e\ne\varnothing\}}-w_e\ind_{\{e\subseteq S\}}$.
Summing over edges therefore gives a feasible pair with
$u(V)=W$. The feasible set is closed, since submodularity,
monotonicity, normalization, and the identity $u+v=d_w$ impose
finitely many linear equalities and inequalities on the values
of $u,v$. On a sublevel set $u(V)\le M$, monotonicity and
normalization give $0\le u(S)\le M$ for every $S\subseteq V$.
Since $d_w(V)=0$, we also have $v(V)=-u(V)$ and hence
$-M\le v(S)\le0$. Thus every nonempty sublevel set is compact.
In particular, the sublevel set $u(V)\le W$ is nonempty and
compact, so the objective attains its global minimum there.
The following lemma shows that complement averaging preserves
this minimum.

\begin{lemma}\label{lem:cutsym}
Let $G=(V,E)$ be a finite graph with nonnegative edge weights
$w$, and let $d_w=u+v$ be a monotonic sum-decomposition with
$u(\varnothing)=v(\varnothing)=0$.
Then there exists an increasing submodular function
$f:2^V\to\R$ satisfying $f(\varnothing)=0$, $f(V)=u(V)$, and
\begin{equation}\label{eq:cutidentity}
 d_w(S)=f(S)+f(V\setminus S)-f(V)
 \qquad(S\subseteq V).
\end{equation}
Conversely, if an increasing submodular function $f:2^V\to\R$
satisfies $f(\varnothing)=0$ and~\eqref{eq:cutidentity}, then
$d_w=f+v_f$, where $v_f(S)=f(V\setminus S)-f(V)$, is a
monotonic sum-decomposition with both components zero at the
empty set and objective value $f(V)$.
\end{lemma}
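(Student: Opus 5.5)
The plan is to symmetrize the given pair under complementation. Given $d_w=u+v$, define
\[
 f(S)=\tfrac12\bigl(u(S)-v(V\setminus S)\bigr).
\]
Normalization holds because $v(V)=d_w(V)-u(V)=-u(V)$, which gives $f(\varnothing)=\tfrac12(0-v(V))=\tfrac12 u(V)$. That is not zero, so the definition has to be shifted. The better choice is to use that $S\mapsto v(V\setminus S)-v(V)$ is increasing, submodular and zero at $\varnothing$, by Lemma~\ref{lem:submodularity}(iii). Set
\[
 f(S)=\tfrac12\bigl(u(S)+v(V\setminus S)-v(V)\bigr).
\]
Then $f$ is a nonnegative combination of two increasing submodular functions vanishing at $\varnothing$, so it is increasing, submodular and satisfies $f(\varnothing)=0$. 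Using $v(V)=-u(V)$ and $v(\varnothing)=0$, we get $f(V)=\tfrac12(u(V)+0+u(V))=u(V)$, as required.

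Next comes the verification of~\eqref{eq:cutidentity}. Expand the right-hand side:
\[
 f(S)+f(V\setminus S)-f(V)=\tfrac12\bigl(u(S)+v(V\setminus S)+u(V\setminus S)+v(S)\bigr)-v(V)-u(V).
\]
The last two terms cancel since $v(V)=-u(V)$. The bracket equals $d_w(S)+d_w(V\setminus S)=2d_w(S)$ by symmetry of the cut function. So the right-hand side is $d_w(S)$, which proves the identity. The only subtlety in this direction is choosing the right constant shift, and that is settled above.

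For the converse, let $f$ be increasing and submodular with $f(\varnothing)=0$ and satisfying~\eqref{eq:cutidentity}. Set $v_f(S)=f(V\setminus S)-f(V)$. By Lemma~\ref{lem:submodularity}(iii), $v_f$ is submodular and decreasing, and $v_f(\varnothing)=f(V)-f(V)=0$. The identity $d_w=f+v_f$ is exactly~\eqref{eq:cutidentity}, and the objective value is $u(V)=f(V)$. This gives a monotonic sum-decomposition with both components zero at $\varnothing$. The whole argument is routine bookkeeping, and I do not anticipate any real obstacle beyond tracking normalizations.
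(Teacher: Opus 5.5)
Your proposal is correct and is essentially the paper's proof. After you drop the first candidate (which should simply be deleted from a write-up, since $-v(V\setminus S)$ is not even increasing), you use the same symmetrization $f=\tfrac12\bigl(u+v(V\setminus\cdot)-v(V)\bigr)$. You then verify the identity via $u(V)+v(V)=d_w(V)=0$ and $d_w(S)=d_w(V\setminus S)$, and handle the converse exactly as the paper does.
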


\begin{proof}
{
For $\varphi:2^V\to\R$, write $\varphi^c(S)=\varphi(V\setminus S)$.
We use the closure properties in Lemma~\ref{lem:submodularity}(iii).
}

Now suppose that $d_w=u+v$ is a normalized monotonic
sum-decomposition. Since $d_w(V)=0$, we have
$u(V)+v(V)=0$. The function
$\widetilde u(S)=v(V\setminus S)-v(V)$ is increasing and
submodular, because $v$ is decreasing and submodular.
Moreover, $\widetilde u(\varnothing)=0$ and
$\widetilde u(V)=v(\varnothing)-v(V)=u(V)$.
Define
\[
 f(S)=\frac{u(S)+\widetilde u(S)}2
     =\frac{u(S)+v(V\setminus S)-v(V)}2.
\]
As an average of two increasing submodular functions, $f$
is increasing and submodular. The endpoint values of $u$
and $\widetilde u$ also give $f(\varnothing)=0$ and
$f(V)=u(V)$. To verify the required identity, observe that
$S$ and $V\setminus S$ determine the same cut, so
$d_w(S)=d_w(V\setminus S)$. Consequently,
\[
 \begin{aligned}
 f(S)+f(V\setminus S)-f(V)
 &=\frac{u(S)+v(S)+u(V\setminus S)+v(V\setminus S)}2
      -v(V)-u(V)\\
 &=\frac{d_w(S)+d_w(V\setminus S)}2\\
 &=d_w(S).
 \end{aligned}
\]
This proves~\eqref{eq:cutidentity} and shows that the
objective value is preserved.

Conversely, let $f$ satisfy the stated assumptions and put
$v_f(S)=f(V\setminus S)-f(V)$.
The complementation argument above shows that $v_f$ is
submodular and decreasing. Also,
$v_f(\varnothing)=f(V)-f(V)=0$, while
$f(\varnothing)=0$ by assumption.
Equation~\eqref{eq:cutidentity} gives
$f(S)+v_f(S)=d_w(S)$ for every $S\subseteq V$.
Thus $d_w=f+v_f$ is a normalized monotonic sum-decomposition,
and its increasing component has value $f(V)$ at $V$.
\end{proof}

{
For later use, we record the following results on graph cuts
in the notation of this paper. The clique support statement is
\cite[Lemma~7.2]{BGILS}; the construction and upper bound are
from~\cite[Theorem~7.7 and Corollary~7.8]{BGILS}; and the
complete-graph value follows from~\cite[Lemma~7.6]{BGILS}.

\begin{lemma}\label{lem:known-cuts}
Let $G=(V,E)$ be a finite simple graph with nonnegative
edge weights $w$, and put $W=\sum_{e\in E}w_e$.
\begin{enumerate}
\item If $d_w=u+v$ is a normalized monotonic sum-decomposition,
then the unique multilinear representation
\[
 u(x)=\sum_{U\subseteq V}a_U\prod_{i\in U}x_i,
 \qquad
 a_U=\sum_{R\subseteq U}(-1)^{|U|-|R|}u(R)
\]
has $a_U=0$ whenever $U$ is not a clique of $G$.
\item For every feasible fractional triangle packing $p$, put
$r_e=w_e-\sum_{T\supseteq e}p_T$. The functions
\begin{equation}\label{eq:optimizer}
 \begin{aligned}
 f_p(S)&=\sum_{e\in E}r_eC_e(S)
          +2\sum_{T\in\T}p_TC_T(S),\\
 g_p(S)&=f_p(V\setminus S)-f_p(V).
 \end{aligned}
\end{equation}
form a normalized monotonic sum-decomposition of $d_w$, with
$f_p(V)=W-\sum_Tp_T$. Consequently,
$N_G(w)\le W-\nu_w^*(G)$.
\item For the unweighted complete graph $K_n$,
$N_{K_n}=\max_{S\subseteq V(K_n)}d_{K_n}(S)=\lfloor n^2/4\rfloor$.
\end{enumerate}
\end{lemma}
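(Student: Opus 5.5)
We prove the three parts separately. Parts (i) and (ii) are local computations with the multilinear cut polynomial and the coverage functions. Part (iii) comes from an explicit symmetric function fed into Lemma~\ref{lem:cutsym}.

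\emph{Part (i).} I would avoid monotonicity entirely and use only submodularity. Take non-adjacent $i,j$ and $S\subseteq V\setminus\{i,j\}$. The pair $ij$ contributes no edge, so $\Delta_{ij}d_w(S)=0$: each edge's cut indicator depends on at most one of $i,j$, hence has vanishing mixed second difference. Since $d_w=u+v$, this gives $\Delta_{ij}u(S)=-\Delta_{ij}v(S)$. Both sides are $\le0$ by Lemma~\ref{lem:submodularity}(i), so $\Delta_{ij}u(S)=0$ for every such $S$. In the multilinear representation one has
\[
 \Delta_{ij}u(S)=\sum_{R\subseteq S}a_{R\cup\{i,j\}},
\]
because the Möbius coefficients of the discrete derivative are exactly the $a_U$ with $U\supseteq\{i,j\}$. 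Möbius inversion over $S$ then yields $a_{R\cup\{i,j\}}=0$ for all $R$. Hence $a_U\neq0$ forces every pair in $U$ to be adjacent, that is, $U$ is a clique.

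\emph{Part (ii).} Feasibility of $p$ gives $r_e\ge0$. By Lemma~\ref{lem:submodularity}(iii),(iv), $f_p$ is increasing, submodular and vanishes at $\varnothing$, and $g_p$ is decreasing, submodular and vanishes at $\varnothing$. It then suffices to check $f_p(S)+f_p(V\setminus S)-f_p(V)=d_w(S)$, and I would do this term by term.
\begin{itemize}
\item For any nonempty $Q$, the quantity $C_Q(S)+C_Q(V\setminus S)-1$ is the indicator that $Q$ is split by $S$.
\item For an edge, this indicator is exactly the indicator that the edge is cut.
\item A split triangle has exactly two cut edges, so $2(C_T(S)+C_T(V\setminus S)-1)$ equals the number of cut edges of $T$.
\end{itemize}
Summing, the left side equals $\sum_e\bigl(r_e+\sum_{T\supseteq e}p_T\bigr)\ind_{\{e\text{ cut}\}}=d_w(S)$. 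Moreover,
\[
 f_p(V)=\sum_e r_e+2\sum_Tp_T=W-3\sum_Tp_T+2\sum_Tp_T=W-\sum_Tp_T .
\]
Choosing $p$ to be a maximum packing gives the bound $N_G(w)\le W-\nu_w^*(G)$.

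\emph{Part (iii).} Put $M=\lfloor n^2/4\rfloor$. The lower bound $N_{K_n}\ge\max_S d(S)=M$ was already noted in the introduction. For the upper bound, I would use the converse half of Lemma~\ref{lem:cutsym} with the cardinality function $f(S)=F_{|S|}$, where
\[
 F_k=\begin{cases} k(n-k), & k\le n/2,\\ M, & k\ge n/2.\end{cases}
\]
This definition is consistent: for odd $n$ one has $F_{(n-1)/2}=M$. The increments are $n-2k+1\ge0$ for $k\le n/2$ and $0$ afterwards, so they are nonincreasing. By Lemma~\ref{lem:submodularity}(ii), $f$ is therefore increasing and submodular, with $f(\varnothing)=0$. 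For $k\le n/2$ we get $F_k+F_{n-k}-F_n=k(n-k)$, which is $d_{K_n}(S)$ when $|S|=k$. The case $k\ge n/2$ follows by symmetry under $k\mapsto n-k$. Lemma~\ref{lem:cutsym} then gives a decomposition with objective value $F_n=M$.

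The step needing the most care is the Möbius bookkeeping in part (i). It must be checked that vanishing of $\Delta_{ij}u$ on all $S$ really kills every coefficient $a_U$ with $U\supseteq\{i,j\}$.
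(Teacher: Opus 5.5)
Your proof is correct in all three parts. It differs from the paper in that the paper gives no argument here: it imports (i), (ii) and (iii) from B\'erczi et al.\ (their Lemma~7.2, Theorem~7.7 with Corollary~7.8, and Lemma~7.6). Your route makes these facts self-contained, using only tools already in the paper.

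\emph{Part (i).} For a non-adjacent pair, $\Delta_{ij}d_w\equiv0$ forces $\Delta_{ij}u=-\Delta_{ij}v$ to vanish. The identity $\Delta_{ij}u(S)=\sum_{R\subseteq S}a_{R\cup\{i,j\}}$ is right, and M\"obius inversion over $S\subseteq V\setminus\{i,j\}$ then kills every coefficient containing $\{i,j\}$. This uses only submodularity of both components, never monotonicity. So you get clique support for any split of $d_w$ into two submodular functions, which is slightly more than stated.

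\emph{Part (ii).} The split-indicator identity $C_Q(S)+C_Q(V\setminus S)-1=\ind_{\{Q\text{ split by }S\}}$ works together with the fact that a split triangle has exactly two cut edges. This verifies \eqref{eq:cutidentity} directly, avoiding the multilinear coefficient bookkeeping that the paper later performs for $f_p$ in Proposition~\ref{prop:optimizers}. The count $\sum_e r_e=W-3\sum_Tp_T$ then gives the value $f_p(V)=W-\sum_Tp_T$.

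\emph{Part (iii).} Feeding the running-maximum profile $F_k=\max_{j\le k}j(n-j)$ into the converse half of Lemma~\ref{lem:cutsym} is a clean construction. There is no circularity, since Lemma~\ref{lem:cutsym} is proved independently. It also exhibits explicitly the value-six decomposition of $d_{K_5}$ that the necessity proof of Theorem~\ref{thm:graphs} relies on.

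Two cosmetic points:
\begin{itemize}
\item Lemma~\ref{lem:submodularity}(ii) yields only submodularity. Monotonicity of $f$ should be attributed to the nonnegative increments $n-2k+1$ (and $0$ afterwards).
\item For odd $n$ the cases $k\le n/2$ and $k\ge n/2$ do not overlap. Your ``consistency'' remark is really the observation that the increment at $k=(n+1)/2$ is zero.
\end{itemize}
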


The final standard fact concerns the dimension of the
perturbation space in the classification of optimal decompositions.

\begin{lemma}[{\cite[Theorem~8.2.1]{GodsilRoyle}}]
\label{lem:incidence-rank}
Let $G=(V,E)$ be a finite simple graph, let $B$ be its unsigned
vertex-edge incidence matrix, and let $b(G)$ be the number
of its bipartite connected components, including isolated vertices.
Then $\operatorname{rank}_{\R}B=|V|-b(G)$ and
$\dim_{\R}\ker B=|E|-|V|+b(G)$.
\end{lemma}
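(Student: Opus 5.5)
The plan is to compute the left kernel of $B$ rather than $B$ itself. By rank--nullity, $\operatorname{rank}_{\R}B=|V|-\dim\{y\in\R^V:y^{\mathsf T}B=0\}$. Here $y^{\mathsf T}B=0$ means exactly that $y_i+y_j=0$ for every edge $ij\in E$. The second formula, $\dim\ker B=|E|-\operatorname{rank}B=|E|-|V|+b(G)$, then follows from rank--nullity applied to $B$ as a map $\R^E\to\R^V$. So everything reduces to showing that the space $Y$ of such vectors $y$ has dimension $b(G)$.

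First I reduce to connected graphs. After ordering vertices and edges by connected components, $B$ is block diagonal, and $Y$ splits as a direct sum of the corresponding spaces for the components. Hence $\dim Y$ is the sum over components, and it suffices to show that a connected component $C$ contributes $1$ if it is bipartite and $0$ otherwise.

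For an isolated vertex the row of $B$ is zero. Its contribution is therefore $1$, which matches the convention that isolated vertices count as bipartite components.

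Now let $C$ be connected with at least one edge, and fix a root $r\in C$. Propagating $y_j=-y_i$ along a path from $r$ shows $y_v=(-1)^{\ell}y_r$, where $\ell$ is the length of any $r$--$v$ path. Thus $y$ restricted to $C$ is determined by $y_r$, so the contribution is at most $1$.

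\begin{itemize}
\item If $C$ is bipartite with parts $P,Q$, the vector equal to $1$ on $P$ and $-1$ on $Q$ satisfies every edge equation, so the contribution is exactly $1$.
\item If $C$ is not bipartite, it contains an odd cycle. Going once around that cycle gives $y_v=-y_v$ for a vertex $v$ on it, so $y_v=0$. Connectivity then forces $y$ to vanish on all of $C$, and the contribution is $0$.
\end{itemize}

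Summing over components gives $\dim Y=b(G)$, and hence both formulas.

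The only delicate step is the non-bipartite case: the propagation must be well defined, that is, independent of the chosen path. I handle this directly through the odd-cycle argument rather than by fixing a spanning tree.
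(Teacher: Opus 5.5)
Your argument is correct and is essentially the standard proof of the cited result. The paper gives no proof of its own and defers to Godsil--Royle, who likewise compute the left null space $\{z\in\R^V: z^\top B=0\}$ by propagating $z_u=(-1)^{r}z_v$ along paths, obtaining zero on non-bipartite components and a single $\pm1$ sign vector on each bipartite one, and then apply rank--nullity. One correction to your closing remark: for a given $y$ in the left kernel, the relation $y_v=(-1)^{\ell}y_r$ holds automatically for every $r$--$v$ walk, so path-independence is never an issue for the upper bound. It matters only when constructing $y$, and your bipartition vector already handles that.
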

}

\section{Bounded submodular differences}\label{sec:diff}

We begin with the scalar characterization in
Theorem~\ref{thm:main}, allowing arbitrary bounded submodular
components. Finite averaging reduces the necessity argument to
concave profiles, without requiring compatibility between the
underlying partitions. We then establish the exact oscillation
formula in Theorem~\ref{thm:continuous} and remove the continuity
assumption in Proposition~\ref{prop:automatic}. These results
provide the basis for the counterexamples: an oscillatory $C^1$
profile first exhibits the obstruction, and subsequent
constructions strengthen its regularity. Finally, the finite
oscillation bounds lead from H\"older and majority examples to
the exact criterion for countable block functions.

\begin{proof}[Proof of the scalar criterion in Theorem~\ref{thm:main}]
Suppose that $\Phi_f=g-h$, where $g,h$ are bounded submodular
functions on $\B$. Choose $M>0$ such that
$|g(A)|,|h(A)|\le M$ for every $A\in\B$.
For each integer $N\ge2$, choose a Borel partition
$P_{N,1},\ldots,P_{N,N}$ of $[0,1]$ with
$\lambda(P_{N,i})=1/N$. The map
$S\mapsto\bigcup_{i\in S}P_{N,i}$ from $2^{[N]}$ to $\B$
preserves unions and intersections. Composing $g,h$ with this
map therefore gives submodular functions on $2^{[N]}$.
Let $G_{N,k},H_{N,k}$ be their averages over the $k$-element
subsets of $[N]$. By the averaging argument in
Lemma~\ref{lem:finite}, both profiles are concave sequences.
Moreover, $|G_{N,k}|,|H_{N,k}|\le M$, and every union of $k$
cells has measure $k/N$, so
$G_{N,k}-H_{N,k}=f(k/N)$.
Let $G_N,H_N$ be the piecewise linear interpolants of these
profiles at the points $k/N$, $0\le k\le N$.
Their slopes are nonincreasing, hence both functions are
concave on $[0,1]$, and interpolation preserves the bounds
$|G_N|,|H_N|\le M$. Their difference is the piecewise linear
interpolant $f_N$ of $f$ on the same grid. If
$\omega_f(\eta)=\sup\{|f(s)-f(t)|:s,t\in[0,1],\
|s-t|\le\eta\}$, then
$\|f_N-f\|_\infty\le\omega_f(1/N)\to0$.
Indeed, on each grid interval, $f_N$ is a convex combination
of the endpoint values of $f$, and each endpoint is within
distance $1/N$ of every point in that interval.

To obtain limiting concave components, fix
$0<\varepsilon<1/2$. If $q$ is concave on $[0,1]$ and
$|q|\le M$, then its secant slopes satisfy
\[
 \frac{q(1)-q(y)}{1-y}
 \le \frac{q(y)-q(x)}{y-x}
 \le \frac{q(x)-q(0)}{x}
 \qquad(\varepsilon\le x<y\le1-\varepsilon).
\]
The outer quotients bound the middle one between
$-2M/\varepsilon$ and $2M/\varepsilon$.
Thus $G_N,H_N$ are uniformly bounded and uniformly Lipschitz
on every compact subinterval of $(0,1)$.
Apply the Arzel\`a--Ascoli theorem simultaneously to the pairs
$(G_N,H_N)$ on $[1/m,1-1/m]$, successively for $m\ge3$.
A diagonal subsequence then gives integers $N_\ell\to\infty$
such that $G_{N_\ell}\to G$ and $H_{N_\ell}\to H$ locally
uniformly on $(0,1)$. The limits satisfy $|G|,|H|\le M$,
and passing to the limit in the concavity inequalities shows
that both are concave. Since
$G_{N_\ell}-H_{N_\ell}=f_{N_\ell}$ and
$f_{N_\ell}\to f$ uniformly, we obtain $G-H=f$ on $(0,1)$.
Only the common bounds and concavity of the interpolants
are used here; the partitions chosen for different $N$
need not refine one another.

We next extend $G,H$ to the endpoints. Let $q$ denote either
limit function, and fix $b\in(0,1)$. For $0<x<b$, put
$\sigma_b(x)=(q(b)-q(x))/(b-x)$.
If $0<x<y<b$, concavity gives
\[
 q(y)\ge
 \frac{b-y}{b-x}q(x)+\frac{y-x}{b-x}q(b),
\]
and rearranging yields $\sigma_b(y)\le\sigma_b(x)$.
Thus $\sigma_b$ is nonincreasing. For $0<x<b/2$,
the bound $|q|\le M$ gives $|\sigma_b(x)|\le4M/b$,
so $\sigma_b(x)$ has a finite limit as $x\downarrow0$.
The identity $q(x)=q(b)-(b-x)\sigma_b(x)$ then shows that
$q(x)$ also has a finite limit at $0$.
Applying the same argument to the bounded concave function
$x\mapsto q(1-x)$ gives a finite limit at $1$.
Define the endpoint values of $G,H$ by these one-sided limits.
The resulting extensions are continuous on $[0,1]$ and remain
concave: approximate any endpoint arguments by interior points
and pass to the limit in the concavity inequality.
Since $f$ is continuous and $G-H=f$ in the interior,
the same equality holds at both endpoints.
This proves necessity. The extended endpoint values need not
coincide with the endpoint values of the interpolants
$G_N,H_N$; only their difference is prescribed by $f$.

{
Conversely, suppose that $f=G-H$, where $G,H$ are continuous
and concave on $[0,1]$. Lemma~\ref{lem:concave-measure}
shows that $G\circ\lambda$ and $H\circ\lambda$ are bounded
submodular functions with difference $\Phi_f$, proving sufficiency.
}
\end{proof}
The finite averaging argument also yields the quantitative
characterization in Theorem~\ref{thm:continuous}. Its key additional
feature is that it preserves bounds on the range of the subtracted
component. We combine this observation with a kernel representation
of continuous concave functions to determine the optimal oscillation.

\begin{proof}[Proof of Theorem~\ref{thm:continuous}]
We first extend the scalar characterization to the given atomless
probability space $(X,\A,m)$. The divisibility property of
an atomless finite measure provides, for every $N\ge2$, a measurable
partition of $X$ into $N$ cells of measure $1/N$.
The necessity argument in Theorem~\ref{thm:main} therefore applies:
restrict a bounded submodular decomposition $\Phi_f=g-h$ to the
algebra generated by each partition, average over permutations,
and take locally uniform limits of the resulting concave
interpolants. Extending the limits continuously to the endpoints
gives $f=G-H$ with $G,H$ continuous and concave on $[0,1]$.
This construction also preserves the required oscillation bound.
Indeed, if $a=\inf_{A\in\A}h(A)$ and
$b=\sup_{A\in\A}h(A)$, then every averaged value,
every interpolated value, and every value of the limiting
extension $H$ lies in $[a,b]$. Hence
$\osc(H)\le b-a=\osc(h)$, where
$\osc(H)=\max_{[0,1]}H-\min_{[0,1]}H$.
Conversely, composing continuous concave components with $m$
{
gives bounded submodular functions by Lemma~\ref{lem:concave-measure}.
} This proves the equivalence of
(i) and (ii).

We next establish the kernel representation needed for both
the curvature criterion and the optimal value.
We use two elementary distributional facts: a continuous function
on an interval is concave if and only if its second distributional
derivative is nonpositive, and a continuous function with zero
second distributional derivative is affine.
For the first assertion, convolution with nonnegative smooth
mollifiers on compact subintervals converts the distributional
inequality into the usual second-derivative inequality; local
uniform convergence then preserves concavity. The converse
follows by the same approximation, since mollification preserves
concavity. Applying this characterization to both a function
and its negative proves the second assertion.
For fixed $t\in(0,1)$, the kernel
$K(x,t)=\min\{x,t\}-xt$ has the form
\[
 K(x,t)=
 \begin{cases}
  x(1-t),&0\le x\le t,\\
  t(1-x),&t\le x\le1.
 \end{cases}
\]
It is continuous and concave in $x$, vanishes at $x=0,1$,
and satisfies $0\le K(x,t)\le t(1-t)$.
Its first derivative drops by one at $x=t$, so integration
by parts on $(0,t)$ and $(t,1)$ gives
\[
 \int_0^1 K(x,t)\phi''(x)\,dx=-\phi(t)
 \qquad\bigl(\phi\in C_c^\infty(0,1)\bigr).
\]

Let $q$ be continuous and concave on $[0,1]$.
Its curvature $\nu=-q''$ is a positive locally finite Radon
measure on $(0,1)$; equivalently, it is the measure associated
with the nonincreasing right derivative of $q$.
Write $L_q(x)=(1-x)q(0)+xq(1)$.
For a compact interval $I\subset(0,1)$, define
$Q_I(x)=\int_I K(x,t)\,d\nu(t)$.
Since $\nu(I)<\infty$, this function is continuous on $[0,1]$,
vanishes at the endpoints, and satisfies
$Q_I''=-\nu|_I$ by the kernel identity and Fubini's theorem.
Thus $q-L_q-Q_I$ has nonpositive second distributional
derivative and is concave. Its endpoint values are zero,
so it is nonnegative throughout $[0,1]$.
Using $K(1/2,t)\ge\frac12t(1-t)$, we obtain
\[
 \int_I t(1-t)\,d\nu(t)
 \le 2Q_I(1/2)
 \le 2\left(q(1/2)-\frac{q(0)+q(1)}2\right).
\]
Exhausting $(0,1)$ by increasing compact intervals and applying
monotone convergence proves
$\int_{(0,1)}t(1-t)\,d\nu(t)<\infty$.

Conversely, let $\nu$ be a positive locally finite Radon
measure on $(0,1)$ with
$\int_{(0,1)}t(1-t)\,d\nu(t)<\infty$, and put
$Q_\nu(x)=\int_{(0,1)}K(x,t)\,d\nu(t)$.
The bound $0\le K(x,t)\le t(1-t)$ and dominated convergence
show that $Q_\nu$ is finite and continuous on $[0,1]$, with
$Q_\nu(0)=Q_\nu(1)=0$. Integrating the concavity inequality
for $K(\cdot,t)$ shows that $Q_\nu$ is concave.
Moreover, for $\phi\in C_c^\infty(0,1)$,
\[
 \int_{(0,1)}\int_0^1
 K(x,t)|\phi''(x)|\,dx\,d\nu(t)
 \le \|\phi''\|_{L^1(0,1)}
       \int_{(0,1)}t(1-t)\,d\nu(t)<\infty.
\]
Fubini's theorem is therefore applicable even when
$\nu((0,1))=\infty$, and it gives
\[
 \langle Q_\nu'',\phi\rangle
 =\int_{(0,1)}
   \left(\int_0^1K(x,t)\phi''(x)\,dx\right)d\nu(t)
 =-\int_{(0,1)}\phi(t)\,d\nu(t).
\]
Hence $Q_\nu''=-\nu$.
Applying this to the curvature measure of the preceding
concave function $q$, we find that $q-L_q-Q_\nu$ has zero
second distributional derivative. It is therefore affine,
and its zero endpoint values imply the representation
\[
 q(x)-L_q(x)=\int_{(0,1)}K(x,t)\,d\nu(t).
\]

We can now prove the equivalence of (ii) and (iii).
If $f=G-H$ with continuous concave components, set
$\nu_G=-G''$ and $\nu_H=-H''$.
Then $\mu=f''=\nu_H-\nu_G$ is a signed locally finite
Radon measure, and $|\mu|\le\nu_H+\nu_G$.
The weighted integrability just proved for each curvature
measure gives
$\int_{(0,1)}t(1-t)\,d|\mu|(t)<\infty$.
Conversely, suppose that (iii) holds, and let $\mu^+,\mu^-$
be the positive and negative parts of $\mu$, defined by its
compatible Jordan decompositions on compact subintervals.
Both satisfy the weighted integrability condition. Define
\[
 H_+(x)=\int_{(0,1)}K(x,t)\,d\mu^+(t),
 \qquad
 H_-(x)=\int_{(0,1)}K(x,t)\,d\mu^-(t).
\]
These functions are continuous and concave and vanish at
both endpoints. For $L_f(x)=(1-x)f(0)+xf(1)$, the function
$L_f+H_--H_+$ has second distributional derivative
$-\mu^-+\mu^+=\mu=f''$ and the same endpoint values as $f$.
Their difference is therefore affine and zero at both
endpoints, proving
\[
 f=L_f+H_--H_+.
\]
Since $L_f+H_-$ is concave, this establishes (ii).
It also gives a bounded submodular decomposition
$\Phi_f=(L_f+H_-)\circ m-H_+\circ m$.
The function $H_+$ is nonnegative and vanishes at the endpoints,
so $\osc(H_+)=\max_{x\in[0,1]}H_+(x)$.
An atomless probability measure takes every value in $[0,1]$,
and hence $\osc(H_+\circ m)=\osc(H_+)$.
Consequently, $\mathcal C(f)\le\max_{x\in[0,1]}H_+(x)$.

For the reverse inequality, take any bounded submodular
decomposition $\Phi_f=g-h$. The averaging argument at the
start of the proof gives continuous concave functions $G,H$
such that $f=G-H$ and $\osc(H)\le\osc(h)$.
Put $\nu_G=-G''$ and $\nu_H=-H''$.
These are positive locally finite measures, and
$\mu=\nu_H-\nu_G$. We claim that $\mu^+\le\nu_H$.
Indeed, for every relatively compact Borel set
$E\subset(0,1)$, the positive-part formula for the finite
signed measure on $E$ gives
\[
 \mu^+(E)
 =\sup_{\substack{B\subseteq E\\ B\ \mathrm{Borel}}}\mu(B)
 \le\sup_{\substack{B\subseteq E\\ B\ \mathrm{Borel}}}\nu_H(B)
 \le\nu_H(E).
\]
Exhaustion by compact subintervals extends this domination
to all Borel sets.
The kernel representation of $H$ and nonnegativity of $K$
therefore yield
\[
 H(x)-(1-x)H(0)-xH(1)
 =\int_{(0,1)}K(x,t)\,d\nu_H(t)
 \ge H_+(x).
\]
The endpoint interpolation $(1-x)H(0)+xH(1)$ lies between
the minimum and maximum of $H$, so the left-hand side is
at most $\osc(H)$. Thus
$\osc(h)\ge\osc(H)\ge H_+(x)$ for every $x\in[0,1]$.
Taking the maximum over $x$ and then the infimum over all
decompositions proves
\[
 \mathcal C(f)
 =\max_{x\in[0,1]}
   \int_{(0,1)}K(x,t)\,d\mu^+(t).
\]
The decomposition constructed above attains this value,
and both components depend only on measure.
\end{proof}

\begin{remark}
The weighted curvature condition allows infinite curvature
mass near the endpoints. For example, $f(t)=\sqrt t$ is
continuous and concave on $[0,1]$, while
$-f''(t)=\frac14t^{-3/2}$ on $(0,1)$.
Thus $\int_{(0,1)}d|f''|=\infty$, but
$\int_{(0,1)}t(1-t)\,d|f''|(t)<\infty$.
Requiring finite unweighted curvature, or finite total
variation of the derivative over the whole open interval,
would therefore exclude admissible profiles.
\end{remark}

We next remove the continuity assumption. The main step is to
show that bounded submodular components force the scalar profile
to be locally Lipschitz in the interior. The compactness argument
then gives a continuous DC extension, while separate endpoint
corrections account for the remaining freedom.

\begin{proof}[Proof of Proposition~\ref{prop:automatic}]
Suppose that $f\circ m=g-h$, where $g,h$ are bounded submodular
functions on $\A$, and put
$C=\osc(g)+\osc(h)$.
Since an atomless probability measure takes every value in
$[0,1]$, the range of $f$ equals the range of $f\circ m$.
Thus $f$ is bounded and $\osc(f)\le C$.
Fix $0<\varepsilon<1/2$ and take
$\varepsilon\le x<y\le1-\varepsilon$, with $d=y-x$.
We first consider $0<d\le\varepsilon/2$ and set
\[
 k=\lfloor x/d\rfloor,\qquad
 a=x-kd,\qquad
 N=\lfloor(1-a)/d\rfloor.
\]
Then $0\le a<d$, $x=a+kd$, $y=a+(k+1)d$, and $a+Nd\le1$.
Since $y\le1$, we also have $k+1\le N$.
By atomlessness, there are pairwise disjoint measurable sets
$B_0,B_1,\ldots,B_N$ with $m(B_0)=a$ and $m(B_i)=d$ for
$1\le i\le N$; when $a=0$, take $B_0=\varnothing$.
For $S\subseteq[N]$, put
$A_S=B_0\cup\bigcup_{i\in S}B_i$.
The identities $A_S\cap A_T=A_{S\cap T}$ and
$A_S\cup A_T=A_{S\cup T}$ show that
$S\mapsto g(A_S)$ and $S\mapsto h(A_S)$ are submodular.
Averaging these functions over permutations of $[N]$ gives
concave profiles $(G_j)_{j=0}^N$ and
$(H_j)_{j=0}^N$ satisfying
$G_j-H_j=f(a+jd)$.
Moreover, averaging does not increase oscillation, so
$\osc(G)\le\osc(g)$ and $\osc(H)\le\osc(h)$,
where the oscillation of a finite sequence is its maximum
minus its minimum.

For either profile $Q=(Q_j)_{j=0}^N$, let
$D_j=Q_j-Q_{j-1}$.
Concavity means that $D_1\ge\cdots\ge D_N$, and hence
\[
 (k+1)D_{k+1}
 \le \sum_{j=1}^{k+1}D_j
 =Q_{k+1}-Q_0
 \le\osc(Q),
\]
while
\[
 (N-k)D_{k+1}
 \ge \sum_{j=k+1}^{N}D_j
 =Q_N-Q_k
 \ge-\osc(Q).
\]
Consequently,
$-\osc(Q)/(N-k)\le Q_{k+1}-Q_k\le\osc(Q)/(k+1)$.
The definitions of $k,N$ give
$(k+1)d>x\ge\varepsilon$ and
\[
 (N-k)d\ge1-a-d-kd
 =1-x-d\ge\varepsilon-d\ge\varepsilon/2.
\]
It follows that
$|Q_{k+1}-Q_k|\le2d\,\osc(Q)/\varepsilon$.
Applying this to $G,H$ yields
\[
 \begin{aligned}
 |f(y)-f(x)|
 &\le|G_{k+1}-G_k|
      +|H_{k+1}-H_k|\\
 &\le\frac{2d}{\varepsilon}
       \bigl(\osc(G)+\osc(H)\bigr)
 \le\frac{2C}{\varepsilon}|y-x|.
 \end{aligned}
\]
If $d>\varepsilon/2$, the same bound follows from
$|f(y)-f(x)|\le\osc(f)\le C$.
Thus $f$ is Lipschitz on $[\varepsilon,1-\varepsilon]$
and therefore locally Lipschitz on $(0,1)$.

We now return to the equal-measure partitions used in the
proof of Theorem~\ref{thm:main}.
Choose $M>0$ with $|g(A)|,|h(A)|\le M$ for all
$A\in\A$.
For each $N\ge2$, partition $X$ into $N$ measurable cells
of measure $1/N$, average the restrictions of $g,h$, and
let $G_N,H_N$ be the corresponding piecewise linear
interpolants on the grid $\{j/N:0\le j\le N\}$.
These functions are concave, satisfy $|G_N|,|H_N|\le M$,
and have difference equal to the interpolant $f_N$ of $f$.
Although $f$ need not be continuous at the endpoints,
$f_N\to f$ locally uniformly on $(0,1)$.
Indeed, if $N>2/\varepsilon$, every grid interval meeting
$[\varepsilon,1-\varepsilon]$ has its endpoints in
$[\varepsilon/2,1-\varepsilon/2]$. If $L_{\varepsilon/2}$
is a Lipschitz constant for $f$ on this larger interval,
then
\[
 \sup_{x\in[\varepsilon,1-\varepsilon]}|f_N(x)-f(x)|
 \le\frac{L_{\varepsilon/2}}{N}\longrightarrow0.
\]
The secant-slope bounds from the proof of
Theorem~\ref{thm:main} make $G_N,H_N$ uniformly Lipschitz
on each compact subinterval of $(0,1)$.
Applying Arzel\`a--Ascoli simultaneously to both sequences
and taking a diagonal subsequence gives bounded concave
limits $G,H$ with $G-H=f$ on $(0,1)$.
As proved there, bounded concave functions on $(0,1)$ have
finite one-sided endpoint limits, and extension by these
limits preserves continuity and concavity.
Thus $\widetilde f=G-H$ is a continuous DC function on
$[0,1]$ agreeing with $f$ throughout $(0,1)$.
No agreement with the original values $f(0),f(1)$ is
required.

Conversely, suppose that $f|_{(0,1)}$ has a continuous
DC extension $\widetilde f=G-H$, with $G,H$ continuous
and concave on $[0,1]$.
For $i=0,1$, put $c_i=f(i)-\widetilde f(i)$ and
$q_i(t)=-\ind_{\{t=i\}}$.
Each $q_i$ is bounded and concave. To verify concavity,
take $s,t\in[0,1]$ and $0<\theta<1$.
If $(1-\theta)s+\theta t\ne i$, then
$q_i((1-\theta)s+\theta t)=0$, whereas
$(1-\theta)q_i(s)+\theta q_i(t)\le0$.
If $(1-\theta)s+\theta t=i$, the fact that $i$ is an
endpoint forces $s=t=i$, so equality holds.
The cases $\theta=0,1$ are immediate.
Since $f-\widetilde f$ vanishes in the interior and has
value $c_i$ at endpoint $i$, we have
\[
 f-\widetilde f
 =\sum_{i=0}^1(-c_i)_+q_i
  -\sum_{i=0}^1(c_i)_+q_i.
\]
Indeed, $(-c_i)_+-(c_i)_+=-c_i$ and
$q_i=-\ind_{\{t=i\}}$.
Therefore the functions
\[
 \widehat G=G+\sum_{i=0}^1(-c_i)_+q_i,
 \qquad
 \widehat H=H+\sum_{i=0}^1(c_i)_+q_i
\]
are bounded and concave on $[0,1]$ and satisfy
$f=\widehat G-\widehat H$.
{
They may be discontinuous at the endpoints, but
Lemma~\ref{lem:concave-measure} requires only boundedness
and concavity of the profile.
}
Hence $\widehat G\circ m$ and $\widehat H\circ m$ are
bounded submodular functions whose difference is $f\circ m$.
This proves sufficiency and shows that both endpoint values
are unrestricted.
\end{proof}

The preceding argument shows that bounded submodular
decomposability forces a profile to be locally Lipschitz.
For $C^1$ profiles, the DC characterization imposes a stronger
restriction: the derivative must have bounded variation on
every compact subinterval of $(0,1)$.
{
This is the property recorded in Lemma~\ref{lem:derivative}.
}

We now use Lemma~\ref{lem:derivative} to construct the counterexample
in Theorem~\ref{thm:main}. The profile will have a positive bounded
continuous derivative, but the derivative will have infinite
variation on an interior compact interval.

\begin{proof}[Proof of the counterexample in Theorem~\ref{thm:main}]
Define
\[
 q(t)=
 \begin{cases}
  1+(t-\tfrac12)\sin\bigl((t-\tfrac12)^{-2}\bigr),
       &t\ne\tfrac12,\\
  1,&t=\tfrac12,
 \end{cases}
 \qquad
 F(t)=\int_0^t q(s)\,ds.
\]
The function $q$ is continuous away from $1/2$, and
$|q(t)-1|\le|t-\tfrac12|$ proves continuity at $1/2$.
Thus the fundamental theorem of calculus gives
$F\in C^1([0,1])$ and $F'=q$.
Since $|q(t)-1|\le1/2$, we have
$1/2\le F'(t)\le3/2$ throughout $[0,1]$.
Moreover, $q(t)+q(1-t)=2$ for every $t\in[0,1]$.
Integrating and substituting $s=1-t$ in the second integral
gives $2\int_0^1q(t)\,dt=2$.
Consequently, $F(0)=0$ and $F(1)=1$.
The positive lower bound on $F'$ makes $F$ strictly increasing,
so its range is $[0,1]$.

Set $\Phi_F(A)=F(\lambda(A))$ for $A\in\B$.
Since both $\lambda$ and $F$ are increasing, $\Phi_F$ is increasing.
For every finite inclusion chain
$\varnothing=A_0\subseteq\cdots\subseteq A_r=[0,1]$, its
variation sum telescopes:
\[
 \sum_{i=1}^r|\Phi_F(A_i)-\Phi_F(A_{i-1})|
 =\Phi_F([0,1])-\Phi_F(\varnothing)=1.
\]
Hence $\Var(\Phi_F)=1$.
The upper bound on $F'$ also gives
$|F(s)-F(t)|\le\frac32|s-t|$ for $s,t\in[0,1]$.
For $A,B\in\B$, the identity
$\lambda(A)-\lambda(B)
=\lambda(A\setminus B)-\lambda(B\setminus A)$ implies
$|\lambda(A)-\lambda(B)|\le\lambda(A\triangle B)$.
Therefore
\[
 |\Phi_F(A)-\Phi_F(B)|
 \le\frac32|\lambda(A)-\lambda(B)|
 \le\frac32\lambda(A\triangle B),
\]
which is the required Lipschitz estimate.

It remains to rule out a bounded submodular difference.
We do this by showing that $F'$ has infinite ordinary total
variation on $[1/2,3/4]$.
For $n\ge1$, put
$a_n=(\pi/2+n\pi)^{-1/2}$ and $t_n=1/2+a_n$.
Then $a_n\downarrow0$ and $t_n\downarrow1/2$.
Choose $n_0$ such that $t_{n_0}<3/4$.
For every $n\ge n_0$,
$\sin(a_n^{-2})=\sin(\pi/2+n\pi)=(-1)^n$, so
$F'(t_n)=1+(-1)^na_n$ and
\[
 |F'(t_n)-F'(t_{n+1})|=a_n+a_{n+1}.
\]
For each integer $m>n_0$, consider the finite partition
$1/2<t_m<t_{m-1}<\cdots<t_{n_0}<3/4$, including both
endpoints. Discarding the nonnegative contributions from
the first and last subintervals gives
\[
 \Var_{[1/2,\,3/4]}(F')
 \ge\sum_{n=n_0}^{m-1}
       |F'(t_n)-F'(t_{n+1})|
 =\sum_{n=n_0}^{m-1}(a_n+a_{n+1}).
\]
For $n\ge1$, the inequality $\pi/2+n\pi\le2\pi n$ yields
$a_n\ge(2\pi n)^{-1/2}$.
Since $\sum_{n\ge n_0}n^{-1/2}$ diverges, these variation
sums are unbounded as $m\to\infty$.
Thus $\Var_{[1/2,\,3/4]}(F')=\infty$.
The interval $[1/2,3/4]$ lies strictly inside $(0,1)$,
so Lemma~\ref{lem:derivative} implies that $F$ is not a
difference of two continuous concave functions.
By the scalar criterion already proved in
Theorem~\ref{thm:main}, $\Phi_F=F\circ\lambda$ is therefore
not a difference of two bounded submodular functions.
\end{proof}

The preceding example shows that a positive bounded continuous
derivative does not suffice for bounded submodular decomposability.
We now retain the same variation obstruction while imposing a
log-Lipschitz modulus on the derivative. This gives a single
counterexample in every class $C^{1,\alpha}$, $0<\alpha<1$,
whereas $C^{1,1}$ regularity guarantees a DC representation.

\begin{theorem}\label{thm:smooth}
There exist $F\in C^1([0,1])$ and a constant $C>0$ such that
$F(0)=0$, $F(1)=1$, $1/2\le F'(t)\le3/2$ for all
$t\in[0,1]$, and
\[
 |F'(s)-F'(t)|
 \le C|s-t|\log\frac{e}{|s-t|}
 \qquad(s,t\in[0,1],\ s\ne t).
\]
Nevertheless, $F$ is not DC. Moreover, $F$ can be chosen
smooth away from one interior point and belongs simultaneously
to every $C^{1,\alpha}([0,1])$, $0<\alpha<1$.
For every atomless probability space $(X,\A,m)$,
the set function $\Phi_F(A)=F(m(A))$ is increasing, has
variation one, and satisfies
$|\Phi_F(A)-\Phi_F(B)|\le\frac32m(A\triangle B)$,
but is not a difference of two bounded submodular functions.
In contrast, every $C^{1,1}$ function on $[0,1]$ is a
difference of two continuous concave functions.
\end{theorem}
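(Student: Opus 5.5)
I will build the profile by integrating a derivative of the form $q(t)=1+\psi(t-\tfrac12)$. The perturbation $\psi$ will be odd, supported in $[-1/4,1/4]$, bounded by $1/2$, smooth away from $0$, and log-Lipschitz at the origin, yet of infinite variation near $0$. Oddness of $\psi$ gives $q(t)+q(1-t)=2$, exactly as in the earlier counterexample, so $F(t)=\int_0^tq$ satisfies $F(0)=0$, $F(1)=1$ and $1/2\le F'\le 3/2$.

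For the perturbation I plan to use a lacunary bump sum. Fix a smooth bump $\beta$ supported in $[1,2]$ with $0\le\beta\le1$ and $\beta(3/2)=1$. Put
\[
 \psi(x)=\operatorname{sgn}(x)\sum_{k\ge k_0}c\,\frac{2^{-k}}{\,\ldots\,}
\]
where each term is an amplitude times $\beta(2^kx)$. I will choose the amplitude $a_k=c/k$ times $2^{-k}$ scaled against $2^k$ in the argument. Concretely the $k$th term is $(c/k)\,2^{-k}\cdot2^k\beta(2^k|x|)$, which I would simplify to $\psi(x)=\operatorname{sgn}(x)\sum_k (c/k)\beta(2^k|x|)$. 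With disjoint dyadic supports this gives:
\begin{enumerate}
\item $\psi$ is continuous at $0$, since the amplitudes $c/k\to0$.
\item $\psi$ has infinite variation on $[0,1/4]$, since each bump contributes $2c/k$ and $\sum 1/k=\infty$.
\item $\psi$ is smooth away from $0$, because near any $x\ne0$ only finitely many terms are nonzero.
\end{enumerate}

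The main obstacle is the modulus estimate, and I now check whether the choice above satisfies it. A log-Lipschitz modulus of continuity $\omega(r)=Cr\log(e/r)$ requires the slopes of the bumps to be controlled. The $k$th bump has slope of order $2^k/k$ on a scale $2^{-k}$. The increment over a gap $r\sim2^{-k}$ is therefore $\sim1/k$, while $r\log(e/r)\sim k2^{-k}$. This fails, so the amplitudes $c/k$ are far too large. Log-Lipschitz continuity forces amplitudes of order at most $k2^{-k}$ at scale $2^{-k}$, and then the total variation $\sum k2^{-k}$ is finite. Disjoint dyadic bumps therefore cannot work.

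I would instead use many oscillations per scale. On the dyadic shell $[2^{-k},2^{1-k}]$, place $n_k$ bumps of width $2^{-k}/n_k$ and height $h_k$. Lipschitz control within the shell requires $h_kn_k2^k\lesssim k$, that is, a local slope of about $k$, which is the size $\log(1/r)$ permits at distance $\sim2^{-k}$ from the singular point. The variation in the shell is then $\sim n_kh_k\sim k2^{-k}$, and the sum over $k$ is again finite. So a single singular point with slope $\log$ gives $\int_0\log(1/x)\,dx<\infty$, and the natural construction has finite variation.

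The way out is to let the slope bound depend on the scale of the difference $|s-t|$ rather than on the distance to the singular point. The modulus $r\log(e/r)$ only bounds increments over gaps of size $r$. Hence high-frequency oscillation is allowed: $\psi(x)=\sum_k 2^{-k}\sin(2^kx)\,\chi(x)$ is the classical Zygmund-type function, whose derivative is $\sum\cos$, of unbounded variation, and whose modulus is exactly $r\log(1/r)$. With a cutoff $\chi$ vanishing away from $0$ and a weighting so that the sum is smooth away from $0$, for example terms $2^{-k}\sin(4^k x)$ with supports shrinking toward $0$, I expect to obtain infinite variation near $0$. This last step is the one I am least sure of, and I would verify it first.

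Once $\psi$ is in hand, the remaining steps are routine:
\begin{enumerate}
\item Infinite variation of $F'$ on an interior interval shows, by Lemma~\ref{lem:derivative}, that $F$ is not DC.
\item The estimate $r\log(e/r)\le C_\alpha r^\alpha$ gives $F\in C^{1,\alpha}$ for every $0<\alpha<1$.
\item The set-function claims follow verbatim from the earlier counterexample proof, with Theorem~\ref{thm:continuous} in place of the scalar criterion.
\item The $C^{1,1}$ statement is the preliminary remark obtained via Hartman's criterion.
\end{enumerate}
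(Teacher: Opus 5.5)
Your reductions are fine: the odd normalization gives $F(0)=0$, $F(1)=1$ and $1/2\le F'\le 3/2$. Non-DC, $C^{1,\alpha}$, the set-function claims and the $C^{1,1}$ remark then follow exactly as you say, from Lemma~\ref{lem:derivative}, $r\log(e/r)\le C_\alpha r^\alpha$, Theorem~\ref{thm:continuous} and Hartman's criterion. However, you never produce the one nonroutine ingredient: a continuous $q$ that is log-Lipschitz, smooth off one point, and of infinite variation.

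You rejected the bump construction because of an incorrect constraint. In the shell computation you require slope of order $k$ for bumps at distance $2^{-k}$ from the singular point, but the modulus $\omega(d)=d\log(e/d)$ does not see that distance; it sees only the bump width. Take a bump $h\beta((t-x)/w)$ with $\beta(0)=\beta(1)=0$ and $\|\beta'\|_\infty=L$. For $d\le w$, points in the same bump satisfy $|q(s)-q(t)|\le L(h/w)d\le LC\,\omega(d)$ as soon as $h\le C\,w\log(e/w)$, because $\log(e/d)\ge\log(e/w)$. Points in different bumps are handled by passing through the zeros at the bump endpoints and using that $\omega$ is nondecreasing. So the admissible height is $h\approx w\log(e/w)$, and what you need is a sequence of widths with $\sum w_n<\infty$ but $\sum w_n\log(1/w_n)=\infty$; dyadic widths fail only because they decay geometrically.

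Your multi-bump shells already work once this is corrected. Bumps of width $2^{-k}/n_k$ may have slope $\log(e\,2^kn_k)$, so the shell variation is about $2^{-k}\log(2^kn_k)$, which is not summable if $\log n_k\approx 2^k/k$. The paper uses one bump per step instead: widths $\ell_n=c/(n(\log n)^2)$ summing to $1/4$ and heights $a_n=1/(n\log n)$. Then $a_n/\ell_n=c^{-1}\log n\le c^{-1}\log(e/\ell_n)$ while $\sum a_n=\infty$, and the bumps accumulate at $3/4$ with $q$ smooth elsewhere. Your observation that the slope bound should depend on the scale of $|s-t|$ is the right insight; it just has to be applied to the bump widths.

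The lacunary fallback does not close the gap. The pairing you name, amplitude $2^{-k}$ at frequency $4^k$, has H\"older-$\tfrac12$ scaling. One such term changes by about $2^{-k}$ over a gap of order $4^{-k}$, whereas $\omega(4^{-k})\approx k4^{-k}$, so it cannot be a building block of a log-Lipschitz function. The genuine Zygmund-type series $\sum_k2^{-k}\sin(2^kx)$ does have modulus of order $r\log(1/r)$ and unbounded variation, but it is smooth on no interval, while the statement requires smoothness away from one point. Once you truncate its terms to shrinking supports to restore smoothness, whether the variation stays infinite depends on quantitative cancellation estimates among the lacunary terms, which you have not carried out. As written, the existence of $F$, the core of the theorem, is unproved.
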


\begin{proof}
Set
\[
 Z=\sum_{n=2}^{\infty}\frac1{n(\log n)^2},
 \qquad c=\frac1{4Z},\qquad
 \ell_n=\frac{c}{n(\log n)^2},
 \qquad a_n=\frac1{n\log n}.
\]
The integral test gives $0<Z<\infty$, so
$\sum_{n=2}^{\infty}\ell_n=1/4$.
Starting with $x_2=1/2$, define $x_{n+1}=x_n+\ell_n$.
Then $x_n\uparrow x_*=3/4$.
Choose $\beta\in C_c^\infty((0,1))$ with
$0\le\beta\le1$ and $\beta(1/2)=1$, and extend it by zero
to $\R$. Define
\[
 q(t)=
 \begin{cases}
  a_n\beta\bigl((t-x_n)/\ell_n\bigr),
      &t\in[x_n,x_{n+1}]\text{ for some }n\ge2,\\
  0,&\text{otherwise}.
 \end{cases}
\]
This definition is consistent at shared endpoints because
$\beta(0)=\beta(1)=0$.
The sequence $(a_n)$ decreases to zero, so $q(t)\to0=q(x_*)$
as $t\uparrow x_*$. Since $q$ vanishes to the right of $x_*$,
it is continuous there. At every finite junction $x_n$,
the adjacent bumps vanish in a neighborhood of the junction,
because $\beta$ is supported in a compact subset of $(0,1)$.
Consequently, $q$ is continuous on $[0,1]$ and smooth away
from $x_*$.

Put $m_0=\int_0^1q(t)\,dt$ and define
\[
 F(t)=t+\frac12\int_0^t(q(s)-m_0)\,ds.
\]
Then $F\in C^1([0,1])$ and
$F'(t)=1+\frac12(q(t)-m_0)$.
Since $a_n\le a_2=1/(2\log2)<1$, we have
$0\le q\le1$ and $0\le m_0\le1$, whence
$1/2\le F'\le3/2$.
The definition gives $F(0)=0$, while
$\int_0^1(q(s)-m_0)\,ds=0$ gives $F(1)=1$.
Moreover, $F$ is smooth away from $x_*$ because $q$ is.

We next establish the modulus of continuity.
Let $\omega(d)=d\log(e/d)$ for $0<d\le1$, set $\omega(0)=0$,
and let $L=\|\beta'\|_\infty$.
The zero extension of $\beta$ is $L$-Lipschitz.
Since $Z\ge1/[2(\log2)^2]$, we have
$c\le(\log2)^2/2$, and therefore
$\ell_n\le1/(2n)$ for every $n\ge2$.
In particular, $\log(e/\ell_n)\ge\log n$, so
\[
 \frac{a_n}{\ell_n}
 =\frac{\log n}{c}
 \le\frac1c\log\frac e{\ell_n}.
\]
If $s,t\in[x_n,x_{n+1}]$ and $d=|s-t|>0$, then
$d\le\ell_n$ and
\[
 |q(s)-q(t)|
 \le L\frac{a_n}{\ell_n}d
 \le\frac Lc\,d\log\frac e{\ell_n}
 \le\frac Lc\,\omega(d).
\]
Now suppose that $s<t$ lie in distinct bump intervals,
say $s\in[x_n,x_{n+1}]$ and $t\in[x_m,x_{m+1}]$ with
$n<m$. Both $x_{n+1}$ and $x_m$ lie between $s$ and $t$,
and $q(x_{n+1})=q(x_m)=0$. Applying the preceding estimate
within each interval gives
\[
 |q(s)-q(t)|
 \le\frac Lc\bigl(\omega(x_{n+1}-s)+\omega(t-x_m)\bigr)
 \le\frac{2L}{c}\,\omega(t-s),
\]
because $\omega$ is nondecreasing on $[0,1]$.
If one point lies outside $[1/2,3/4)$, its value under $q$
is zero. When the other point lies in a bump interval,
an endpoint of that interval lies between the two points,
and the same argument applies using just that endpoint.
If both points lie outside $[1/2,3/4)$, their values are
both zero. Thus the estimate holds for all $s,t\in[0,1]$,
including the accumulation point $x_*$.
Since $F'(s)-F'(t)=\frac12(q(s)-q(t))$, the claimed
log-Lipschitz estimate follows with $C=L/c$.
For each $0<\alpha<1$, the function
$d^{1-\alpha}\log(e/d)$ is bounded on $(0,1]$,
so $\omega(d)\le C_\alpha d^\alpha$.
Hence the same function $F$ belongs to every
$C^{1,\alpha}([0,1])$ with $0<\alpha<1$.

To show that $F$ is not DC, let
$y_n=x_n+\ell_n/2$ be the midpoint of the $n$th bump interval.
The choice of $\beta$ gives
$q(x_n)=q(x_{n+1})=0$ and $q(y_n)=a_n$.
Writing $b_0=1-m_0/2$, we therefore have
$F'(x_n)=F'(x_{n+1})=b_0$ and
$F'(y_n)=b_0+a_n/2$.
For $M\ge2$, take the partition of $[1/2,3/4]$
consisting of
$x_2,y_2,x_3,y_3,\ldots,x_M,y_M,x_{M+1},3/4$.
Its variation sum yields
\[
 \begin{aligned}
 \Var_{[1/2,\,3/4]}(F')
 &\ge\sum_{n=2}^{M}
   \bigl(|F'(y_n)-F'(x_n)|
        +|F'(x_{n+1})-F'(y_n)|\bigr)\\
 &=\sum_{n=2}^{M}a_n
 =\sum_{n=2}^{M}\frac1{n\log n}.
 \end{aligned}
\]
The final sum tends to infinity by the integral test.
Since $[1/2,3/4]\subset(0,1)$,
Lemma~\ref{lem:derivative} rules out a representation of $F$
as a difference of two continuous concave functions.

Let $(X,\A,m)$ now be any atomless probability space.
The bounds on $F'$ imply that $\Phi_F=F\circ m$ is increasing
and that
\[
 |\Phi_F(A)-\Phi_F(B)|
 \le\frac32|m(A)-m(B)|
 \le\frac32m(A\triangle B).
\]
Since $\Phi_F(\varnothing)=0$ and $\Phi_F(X)=1$,
its chain variation is one.
The equivalence of (i) and (ii) in
Theorem~\ref{thm:continuous}, together with the failure of
$F$ to be DC, shows that $\Phi_F$ is not a difference of
two bounded submodular functions.

{
The final assertion is the $C^{1,1}$ consequence recorded
after Lemma~\ref{lem:derivative}.
}
\end{proof}

The preceding construction also gives profiles arbitrarily close
to the identity in $C^1$: multiplying the centered perturbation
of the derivative by any sufficiently small positive constant
preserves its infinite variation. We next show that the derivative
can simultaneously satisfy any prescribed concave modulus
$\omega$ with $\omega(d)/d\to\infty$, while retaining the
log-Lipschitz estimate.

\begin{proposition}[A prescribed modulus of continuity]
\label{prop:modulus}
Let $\omega:[0,1]\to[0,\infty)$ be continuous, increasing,
and concave, with $\omega(0)=0$ and
$\omega(d)/d\to\infty$ as $d\downarrow0$.
The function $F$ in Theorem~\ref{thm:smooth} can be chosen
to satisfy, in addition to all the conclusions of that theorem,
\[
 |F'(s)-F'(t)|\le C\omega(|s-t|)
 \qquad(s,t\in[0,1])
\]
for some constant $C>0$.
In particular, the same function has a log-Lipschitz derivative
and belongs to every $C^{1,\alpha}([0,1])$, $0<\alpha<1$.
Such profiles can be chosen arbitrarily close to the identity
in $C^1([0,1])$.
\end{proposition}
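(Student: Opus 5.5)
Our plan is to rerun the bump construction from the proof of Theorem~\ref{thm:smooth}, but with bump heights chosen so that they are controlled by $\omega$ as well as by the logarithmic modulus. The lengths $\ell_n=c/(n(\log n)^2)$, the points $x_n\uparrow3/4$ and the profile $\beta$ stay as before. The heights $a_n=1/(n\log n)$ are replaced by
\[
 b_n=\min\Bigl\{\frac1{n\log n},\ \omega(\ell_n)\Bigr\},
\]
possibly after thinning to a subsequence. The only requirement that changes is divergence: $\sum_n b_n=\infty$, which is what drives the infinite-variation argument. Lemma~\ref{lem:derivative} and Theorem~\ref{thm:continuous} then give non-DC-ness and the failure of a bounded submodular difference, and the increasing property, the variation, and the Lipschitz bounds follow exactly as before. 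To obtain profiles close to the identity, we multiply the centered perturbation $q-m_0$ by a small $\eta>0$; this scales the variation by $\eta$ and leaves it infinite.

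\emph{Modulus estimate.} Within one bump we have $|q(s)-q(t)|\le L(b_n/\ell_n)\,d$ with $d\le\ell_n$. Since $\omega$ is concave and $\omega(0)=0$, the ratio $\omega(d)/d$ is nonincreasing, so
\[
 \frac{b_n}{\ell_n}\,d\le\frac{\omega(\ell_n)}{\ell_n}\,d\le\omega(d).
\]
For points in different bumps, or outside them, we use the same endpoint-splitting argument as in Theorem~\ref{thm:smooth}, together with the fact that $\omega$ is increasing, and obtain $|q(s)-q(t)|\le 2L\,\omega(|s-t|)$. The bound $b_n\le a_n$ keeps the previous log-Lipschitz estimate, hence membership in every $C^{1,\alpha}$. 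Smoothness away from $x_*$ and the normalizations $F(0)=0$, $F(1)=1$ go through verbatim.

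\emph{Main obstacle.} The hard step is divergence of $\sum_n\min\{a_n,\omega(\ell_n)\}$. This is where $\omega(d)/d\to\infty$ enters: for large $n$ it gives $\omega(\ell_n)\ge\ell_n R_n$ with $R_n\to\infty$. However, $\sum\ell_n<\infty$, so a slowly growing $R_n$ does not suffice with fixed lengths, and the lengths must adapt to $\omega$. We propose to choose the lengths first and then set $b_n=\omega(\ell_n)$, with $\ell_n$ still summable but $\omega(\ell_n)$ not summable. We would group the indices into dyadic blocks. In block $k$ we take $N_k$ equal bumps of length $\lambda_k$, where $\lambda_k$ is chosen so that $\omega(\lambda_k)/\lambda_k\ge 4^k$, and $N_k\approx 2^{-k}/\lambda_k$. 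The block then uses total length about $2^{-k}$, which is summable and can be scaled so the total is $1/4$. Its contribution to the variation is
\[
 N_k\,\omega(\lambda_k)\ge 2^{-k}\cdot 4^k=2^k,
\]
which diverges. To retain the log-Lipschitz bound as well, we cap the heights, $b=\min\{\omega(\lambda_k),\,\lambda_k\log(e/\lambda_k)\}$. The capped block contribution is still at least $\min\{2^k,\,2^{-k}\log(e/\lambda_k)\}$, and taking $\lambda_k$ small enough that $\log(1/\lambda_k)\ge 4^k$ keeps it divergent. We also keep every height at most $1$ so that $1/2\le F'\le3/2$ holds. The remaining estimates are routine repeats of the earlier proof.
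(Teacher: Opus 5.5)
Your final construction matches the paper's proof and is correct; you were right to drop the fixed-length version, since $\sum_n\min\{a_n,\omega(\ell_n)\}$ can converge when $\omega(d)/d$ grows slowly. It uses groups of $N_k$ equal bumps of width $\lambda_k$ with $\min\{\omega(\lambda_k),\lambda_k\log(e/\lambda_k)\}\ge 4^k\lambda_k$, group widths of order $2^{-k}$ summing to at most $1/4$, that capped value as bump height, the modulus bound from monotonicity of $\omega(d)/d$, and a small multiplier on $q-m_0$; the paper does the same with $\widetilde\omega(d)=\min\{\omega(d),d\log(e/d)\}$, $\widetilde\omega(\delta_k)/\delta_k\ge 2^{2k}$ and $M_k\delta_k\in[2^{-k-3},2^{-k-2}]$.
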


\begin{proof}
Set $\eta(0)=0$ and $\eta(d)=d\log(e/d)$ for $0<d\le1$,
and put $\widetilde\omega=\min\{\omega,\eta\}$.
Both $\omega$ and $\eta$ are continuous, increasing, and
concave, and their minimum has the same properties.
Indeed, for $a,b\in[0,1]$ and $0\le\theta\le1$, each of
$\omega((1-\theta)a+\theta b)$ and
$\eta((1-\theta)a+\theta b)$ is at least
$(1-\theta)\widetilde\omega(a)+\theta\widetilde\omega(b)$;
taking their minimum proves concavity.
Moreover, $\widetilde\omega(0)=0$,
$0\le\widetilde\omega(d)\le\eta(d)\le1$, and
\[
 \frac{\widetilde\omega(d)}d
 =\min\left\{\frac{\omega(d)}d,\log(e/d)\right\}
 \longrightarrow\infty
 \qquad(d\downarrow0).
\]
For each $k\ge1$, choose $0<\delta_k\le2^{-k-3}$ such that
$\widetilde\omega(\delta_k)/\delta_k\ge2^{2k}$, and set
$M_k=\lfloor2^{-k-2}/\delta_k\rfloor$.
Since $R_k=2^{-k-2}/\delta_k\ge2$, we have
$R_k/2\le\lfloor R_k\rfloor\le R_k$. Consequently,
\[
 2^{-k-3}\le M_k\delta_k\le2^{-k-2},
 \qquad
 M_k\widetilde\omega(\delta_k)
 \ge2^{-k-3}\frac{\widetilde\omega(\delta_k)}{\delta_k}
 \ge2^{k-3}.
\]
Thus the total width of the groups is finite, whereas the
sum of their bump heights is infinite.

Choose the function $\beta\in C_c^\infty((0,1))$ used in
Theorem~\ref{thm:smooth}, with $0\le\beta\le1$ and
$\beta(1/2)=1$, and extend it by zero to $\R$.
Let $b_1=1/2$, define $b_{k+1}=b_k+M_k\delta_k$, and put
$x_*=\lim_{k\to\infty}b_k$.
The width bounds give
$1/8\le\sum_{k\ge1}M_k\delta_k\le1/4$, so
$x_*\in[5/8,3/4]$.
For $0\le j<M_k$, write $b_{k,j}=b_k+j\delta_k$ and define
\[
 q(t)=
 \widetilde\omega(\delta_k)
 \beta\left(\frac{t-b_{k,j}}{\delta_k}\right)
 \quad\text{for }t\in[b_{k,j},b_{k,j}+\delta_k],
\]
setting $q(t)=0$ outside these intervals.
The definitions agree at shared endpoints because $\beta$
vanishes there. Each group contains finitely many intervals,
and their endpoints have no accumulation point other than
$x_*$.
Since $\delta_k\to0$ and
$\widetilde\omega(\delta_k)\to0$, the values of $q$ tend to
zero as $t\uparrow x_*$. Thus $q$ is continuous at $x_*$.
At every other junction, it vanishes in a neighborhood of
the junction because $\beta$ has compact support in $(0,1)$.
Hence $q$ is continuous on $[0,1]$, smooth away from $x_*$,
and satisfies $0\le q\le1$.

Let $L=\|\beta'\|_\infty$.
If $s,t$ belong to the same bump interval of width
$\delta_k$ and $d=|s-t|$, then the Lipschitz bound for
$\beta$ and concavity of $\widetilde\omega$ give
\[
 |q(s)-q(t)|
 \le L\frac{d}{\delta_k}\widetilde\omega(\delta_k)
 \le L\widetilde\omega(d).
\]
The second inequality follows from
$\widetilde\omega(d)\ge
(d/\delta_k)\widetilde\omega(\delta_k)$ for
$0\le d\le\delta_k$.
If $s<t$ lie in distinct bump intervals, choose the right
endpoint $u$ of the interval containing $s$ and the left
endpoint $v$ of the interval containing $t$.
Then $s\le u\le v\le t$ and $q(u)=q(v)=0$, so
\[
 |q(s)-q(t)|
 \le L\bigl(\widetilde\omega(u-s)
           +\widetilde\omega(t-v)\bigr)
 \le2L\widetilde\omega(t-s).
\]
If one point lies outside the bump intervals, its value
under $q$ is zero, and an endpoint of the other point's
interval lies between the two points. The same estimate
then follows using that endpoint. If both points lie
outside, their values are zero.
Consequently,
$|q(s)-q(t)|\le2L\widetilde\omega(|s-t|)$ for all
$s,t\in[0,1]$.

Put $m_0=\int_0^1q(s)\,ds$.
For any $0<\rho\le1/2$, define
\[
 F_\rho(t)=t+\rho\int_0^t(q(s)-m_0)\,ds.
\]
Then $F_\rho\in C^1([0,1])$, with
$F_\rho'(t)=1+\rho(q(t)-m_0)$.
Since $0\le q,m_0\le1$, we have
$1/2\le1-\rho\le F_\rho'\le1+\rho\le3/2$.
Also $F_\rho(0)=0$ and $F_\rho(1)=1$.
The modulus estimate for $q$ yields
\[
 |F_\rho'(s)-F_\rho'(t)|
 \le2\rho L\widetilde\omega(|s-t|)
 \le2\rho L\omega(|s-t|),
\]
and the same estimate holds with $\eta$ in place of $\omega$.
Thus $F_\rho'$ is log-Lipschitz.
Since $\eta(d)\le C_\alpha d^\alpha$ for every
$0<\alpha<1$, the same profile belongs to every
$C^{1,\alpha}([0,1])$. It is also smooth away from $x_*$.

It remains to verify that the variation obstruction survives.
On each bump interval in group $k$, the endpoint values of
$q$ are zero and its midpoint value is
$\widetilde\omega(\delta_k)$.
The corresponding values of $F_\rho'$ therefore rise from
$1-\rho m_0$ to
$1-\rho m_0+\rho\widetilde\omega(\delta_k)$
and return to $1-\rho m_0$.
A finite partition containing the endpoints and midpoint
of every bump in the first $K$ groups, together with
$1/2$ and $3/4$, consequently gives
\[
 \Var_{[1/2,\,3/4]}(F_\rho')
 \ge2\rho\sum_{k=1}^K
       M_k\widetilde\omega(\delta_k)
 \ge2\rho\sum_{k=1}^K2^{k-3}
 \longrightarrow\infty.
\]
Lemma~\ref{lem:derivative} shows that $F_\rho$ is not DC.
For every atomless probability space $(X,\A,m)$,
Theorem~\ref{thm:continuous} therefore rules out a bounded
submodular difference for $F_\rho\circ m$.
The endpoint normalization and derivative bounds give
monotonicity, chain variation one, and
$|F_\rho(m(A))-F_\rho(m(B))|
\le\frac32m(A\triangle B)$, as in
Theorem~\ref{thm:smooth}.

Finally, $|q-m_0|\le1$ implies
$\|F_\rho-\mathrm{id}\|_\infty\le\rho$ and
$\|F_\rho'-1\|_\infty\le\rho$.
Hence $F_\rho\to\mathrm{id}$ in $C^1([0,1])$ as
$\rho\downarrow0$, while every $\rho>0$ retains infinite
derivative variation. This proves all the assertions.
\end{proof}

The preceding constructions show that strong regularity of the
first derivative can coexist with the variation obstruction.
For the following H\"older family, the obstruction can instead
be seen directly on finite algebras. Lemma~\ref{lem:finite}
forces the oscillation of the subtracted component to become
arbitrarily large as the partitions are refined.

\begin{proposition}\label{prop:holder}
For $0<\alpha<1$, define $f_\alpha(t)=(2t-1)_+^\alpha$ on $[0,1]$ and
$\Phi_{f_\alpha}(A)=f_\alpha(\lambda(A))=(2\lambda(A)-1)_+^\alpha$ for $A\in\B$.
Then $\Phi_{f_\alpha}$ is increasing, takes values in $[0,1]$,
satisfies $\Phi_{f_\alpha}(\varnothing)=0$ and
$\Phi_{f_\alpha}([0,1])=1$, and has variation one. Moreover,
\[
 |\Phi_{f_\alpha}(A)-\Phi_{f_\alpha}(B)|
 \le2^\alpha\lambda(A\triangle B)^\alpha
 \qquad(A,B\in\B).
\]
In particular, it is continuous along increasing and decreasing
sequences of Borel sets. Nevertheless, $\Phi_{f_\alpha}$ is not
a difference of two bounded submodular functions.
\end{proposition}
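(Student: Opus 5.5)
The elementary properties come first and are routine. Since $\lambda$ and $t\mapsto(2t-1)_+^\alpha$ are increasing, $\Phi_{f_\alpha}$ is increasing, and $f_\alpha$ maps $[0,1]$ into $[0,1]$ with $f_\alpha(0)=0$ and $f_\alpha(1)=1$. Every chain variation sum therefore telescopes to $1$.

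For the H\"older bound I will use the scalar inequality $|x_+^\alpha-y_+^\alpha|\le|x-y|^\alpha$. This follows from subadditivity of $s\mapsto s^\alpha$ together with the fact that $x\mapsto x_+$ is $1$-Lipschitz. Combining it with $|\lambda(A)-\lambda(B)|\le\lambda(A\triangle B)$ gives
\[
|\Phi_{f_\alpha}(A)-\Phi_{f_\alpha}(B)|\le 2^\alpha\lambda(A\triangle B)^\alpha.
\]
Continuity along monotone sequences then follows from continuity of $\lambda$ along such sequences.

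For the main claim, the plan is to apply Lemma~\ref{lem:finite} directly on refined equal-measure partitions, as the surrounding text suggests. Suppose $\Phi_{f_\alpha}=g-h$ with $g,h$ bounded and submodular.
\begin{enumerate}
\item For $N=2m$, choose a Borel partition $P_1,\dots,P_N$ of $[0,1]$ into cells of measure $1/N$.
\item Pull $g$ and $h$ back along $S\mapsto\bigcup_{i\in S}P_i$. As in the proof of Theorem~\ref{thm:main}, this gives submodular functions on $2^{[N]}$ whose difference is the cardinality function $S\mapsto f_\alpha(|S|/N)$.
\item The averaged profile is then exactly $F_j=f_\alpha(j/N)$. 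At $k=m$ we have $F_{m-1}=F_m=0$ and $F_{m+1}=(2/N)^\alpha=m^{-\alpha}$, so
\[
\delta=F_{m+1}-2F_m+F_{m-1}=m^{-\alpha}>0.
\]
\item Lemma~\ref{lem:finite} then gives that the oscillation of the pulled-back $h$ is at least $m\delta/2=m^{1-\alpha}/2$.
\item The pulled-back $h$ takes only values of $h$, so $\osc(h)\ge m^{1-\alpha}/2$ for every $m$.
\item Since $\alpha<1$, letting $m\to\infty$ contradicts boundedness of $h$.
\end{enumerate}

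There is no real obstacle. The only point to check is that the kink at $t=1/2$ lands exactly on a grid point, which is why I choose $N$ even and $k=N/2$. This makes the single positive second difference equal to $(2/N)^\alpha$ exactly, with no error terms.

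As an alternative route, Theorem~\ref{thm:continuous}(iii) would also work. Here $f_\alpha''$ has an infinite positive point mass at $1/2$: the right derivative of $f_\alpha$ is infinite there. So $f_\alpha$ is not DC. However, the finite argument is cleaner and also quantifies the blow-up.
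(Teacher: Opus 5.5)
Your proposal is correct and follows the paper's proof essentially step for step: the same telescoping argument for the variation, the same H\"older estimate via subadditivity of $s\mapsto s^\alpha$ and the $1$-Lipschitz map $x\mapsto x_+$, and the same application of Lemma~\ref{lem:finite} on $2m$ equal-measure cells with $k=m$ and $\delta=m^{-\alpha}$, giving $\osc(h)\ge\frac12m^{1-\alpha}\to\infty$. Your side remark about the alternative route is only heuristic as phrased: $f_\alpha''$ is not a locally finite measure near $1/2$ at all, since $f_\alpha'$ is unbounded there. The conclusion that $f_\alpha$ is not DC is nevertheless right, and your main argument does not depend on it.
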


\begin{proof}
The scalar profile $f_\alpha(t)=(2t-1)_+^\alpha$ is increasing
on $[0,1]$, with $f_\alpha(0)=0$ and $f_\alpha(1)=1$.
Thus $\Phi_{f_\alpha}=f_\alpha\circ\lambda$ has the asserted
range, monotonicity, and endpoint values.
Every finite inclusion chain
$\varnothing=A_0\subseteq\cdots\subseteq A_r=[0,1]$
therefore satisfies
\[
 \sum_{i=1}^r
 |\Phi_{f_\alpha}(A_i)-\Phi_{f_\alpha}(A_{i-1})|
 =\Phi_{f_\alpha}([0,1])-\Phi_{f_\alpha}(\varnothing)=1,
\]
so $\Var(\Phi_{f_\alpha})=1$.
To prove the H\"older estimate, first note that
$(u+v)^\alpha\le u^\alpha+v^\alpha$ for $u,v\ge0$.
When $u+v>0$, this follows by applying
$t^\alpha\ge t$ on $[0,1]$ to $u/(u+v)$ and $v/(u+v)$
and adding; the case $u=v=0$ is immediate.
Consequently, if $r\ge s\ge0$, then
$r^\alpha-s^\alpha\le(r-s)^\alpha$.
By symmetry, $|r^\alpha-s^\alpha|\le|r-s|^\alpha$
for all $r,s\ge0$.
Combining this with $|x_+-y_+|\le|x-y|$ gives
$|x_+^\alpha-y_+^\alpha|\le|x-y|^\alpha$ for real $x,y$.
Hence
\[
 \begin{aligned}
 |\Phi_{f_\alpha}(A)-\Phi_{f_\alpha}(B)|
 &\le 2^\alpha|\lambda(A)-\lambda(B)|^\alpha\\
 &\le 2^\alpha\lambda(A\triangle B)^\alpha.
 \end{aligned}
\]
If $A_n\uparrow A$ or $A_n\downarrow A$, continuity of
the finite measure $\lambda$ gives
$\lambda(A_n\triangle A)\to0$.
The displayed estimate then implies
$\Phi_{f_\alpha}(A_n)\to\Phi_{f_\alpha}(A)$.

Suppose now that $\Phi_{f_\alpha}=g-h$, where $g,h$ are bounded
submodular functions on $\B$.
For each integer $m\ge1$, choose a Borel partition
$I_1,\ldots,I_{2m}$ of $[0,1]$ with
$\lambda(I_i)=1/(2m)$, and define
$g^{(m)}(S)=g(\bigcup_{i\in S}I_i)$ and
$h^{(m)}(S)=h(\bigcup_{i\in S}I_i)$ for $S\subseteq[2m]$.
The map from an index set to the corresponding union of cells
preserves unions and intersections, so $g^{(m)},h^{(m)}$
are submodular. Their values are values of the original
functions, and therefore $\osc(h^{(m)})\le\osc(h)$.
Since a union of $k$ cells has measure $k/(2m)$, their
difference is a cardinality function with profile
$F_k^{(m)}=(k/m-1)_+^\alpha$, $0\le k\le2m$.
In particular,
$F_{m-1}^{(m)}=F_m^{(m)}=0$ and
$F_{m+1}^{(m)}=m^{-\alpha}$, so the positive second
difference at $k=m$ is $\delta=m^{-\alpha}$.
Lemma~\ref{lem:finite}, applied with $N=2m$ and $k=m$,
therefore gives
\[
 \osc(h)\ge\osc(h^{(m)})
 \ge\frac m2\,m^{-\alpha}
 =\frac12m^{1-\alpha}.
\]
This holds for every $m\ge1$.
Since $1-\alpha>0$, the right-hand side tends to infinity,
contradicting boundedness of $h$.
\end{proof}

The exponent range in Proposition~\ref{prop:holder} is sharp
within this family. For $\alpha\ge1$, the profile
$f_\alpha(t)=(2t-1)_+^\alpha$ is continuous and convex, since
$t\mapsto(2t-1)_+$ is convex and $r\mapsto r^\alpha$ is
convex and increasing on $[0,\infty)$.
Thus $-f_\alpha$ is continuous and concave, and
$\Phi_{f_\alpha}=0-((-f_\alpha)\circ\lambda)$ is a difference
of two bounded submodular functions.

The preceding finite-algebra argument uses a single positive
second difference to force a large oscillation in the subtracted
component. For strict majority functions, the same bound is exact.
We first establish this finite statement and then use it to
construct a product-continuous counterexample on a countable
ground set.

\begin{lemma}\label{lem:threshold}
Let $m\ge1$ be an integer, let $|J|=2m$, and let $a>0$.
If
$a\ind_{\{|S|\ge m+1\}}=g(S)-h(S)$ for every $S\subseteq J$,
where $g,h:2^J\to\R$ are submodular, then
$\osc(h)\ge am/2$.
For every $m$ and $a$, equality is attained by cardinality
functions $g,h$.
\end{lemma}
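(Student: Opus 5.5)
The plan is to apply Lemma~\ref{lem:finite} directly to the cardinality profile $F_j=a\ind_{\{j\ge m+1\}}$, with $N=2m$ and $k=m$, and then to exhibit an explicit extremal pair.

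For the lower bound, note that $\varphi(S)=a\ind_{\{|S|\ge m+1\}}$ already depends only on cardinality. Its averaged profile is therefore $F_j=a\ind_{\{j\ge m+1\}}$ for $0\le j\le 2m$. At $k=m$ we have $F_{m-1}=F_m=0$ and $F_{m+1}=a$, so the second difference is $\delta=a>0$. This requires $1\le m\le N-1$, which holds because $m\ge1$ and $N=2m$. The special case of Lemma~\ref{lem:finite} then gives $\osc(h)\ge m\delta/2=am/2$. No further averaging is needed, since the lemma allows arbitrary submodular $g,h$.

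For sharpness, I would use the extremal construction from the proof of Theorem~\ref{thm:finite}. The only positive second difference of $F$ is $\delta_m=a$. Indeed, at $j=m+1$ the second difference is $F_{m+2}-2F_{m+1}+F_m=-a$ (or, if $m+1=N$, index $m+1$ is not interior), and it is $0$ at every other interior index. Accordingly I set
\[
 H_j=a\,K_{2m}(j,m)=a\bigl(\min\{j,m\}-j/2\bigr),\qquad G_j=F_j+H_j.
\]
The profile $H$ is concave, with increments $a/2$ and then $-a/2$. The profile $G$ has second differences $\min\{F_{j+1}-2F_j+F_{j-1},0\}\le0$, so it is also concave. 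By Lemma~\ref{lem:submodularity}(ii), $h(S)=H_{|S|}$ and $g(S)=G_{|S|}$ are submodular, and $g-h=\varphi$. The oscillation of $h$ is $\max_j H_j-\min_j H_j$. Since $H_0=H_{2m}=0$ and $H_m=am/2$ is the maximum, this equals $am/2$, matching the lower bound. Alternatively, one can simply cite Theorem~\ref{thm:finite} and evaluate $\max_j K_{2m}(j,m)\,a=K_{2m}(m,m)\,a=am/2$.

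There is no real obstacle. The only point needing care is the sign bookkeeping for the second differences at $j=m$ and $j=m+1$, so as to confirm that $G$ is concave at $j=m+1$. There the second difference is $-a$, which is nonpositive, so concavity holds.
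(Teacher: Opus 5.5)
Your proof is correct and follows the paper's proof: the lower bound is Lemma~\ref{lem:finite} with $N=2m$, $k=m$, $\delta=a$, and your extremal profile $H_j=aK_{2m}(j,m)=\frac a2\min\{j,2m-j\}$ with $G=F+H$ is exactly the paper's pair. The only difference is that the paper verifies concavity of $G$ by listing its increments directly, whereas you check its second differences.
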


\begin{proof}
The function being decomposed has cardinality profile
$F_k=a\ind_{\{k\ge m+1\}}$, $0\le k\le2m$.
In particular, $F_{m-1}=F_m=0$ and $F_{m+1}=a$, so its
second difference at $k=m$ is $a$.
Lemma~\ref{lem:finite}, applied with $N=2m$ and $k=m$,
therefore gives $\osc(h)\ge am/2$.

To attain this bound, define
$H_k=\frac a2\min\{k,2m-k\}$ and
$G_k=H_k+a\ind_{\{k\ge m+1\}}$ for $0\le k\le2m$.
The successive increments $H_k-H_{k-1}$ are $a/2$ for
$1\le k\le m$ and $-a/2$ for $m+1\le k\le2m$.
The threshold profile has a single nonzero increment,
equal to $a$ at index $m+1$.
Consequently, $G_k-G_{k-1}$ is $a/2$ for
$1\le k\le m+1$ and $-a/2$ for $m+2\le k\le2m$,
with the latter range empty when $m=1$.
Both increment sequences are nonincreasing, so
$g(S)=G_{|S|}$ and $h(S)=H_{|S|}$ are submodular
cardinality functions. Their difference is the prescribed
strict majority function.
Finally, $H_k\ge0$, $H_0=H_{2m}=0$, and the maximum is
$H_m=am/2$. Hence $\osc(h)=am/2$, proving sharpness.
\end{proof}

The lemma separates the amplitude of a majority function
from the oscillation required in a submodular decomposition:
the former is $a$, whereas the latter is $am/2$.
We exploit this difference by choosing summable amplitudes
on blocks whose sizes grow sufficiently fast. The resulting
series is continuous, but its individual blocks force
unbounded decomposition costs.

\begin{theorem}\label{thm:countable}
Let $(B_n)_{n\ge1}$ be pairwise disjoint finite sets with
$|B_n|=2\cdot4^n$, and let $J=\bigsqcup_{n\ge1}B_n$.
Define
\[
 \Psi(A)=\sum_{n=1}^{\infty}
 2^{-n}\ind_{\{|A\cap B_n|\ge4^n+1\}},
 \qquad A\subseteq J.
\]
Then $\Psi$ is increasing, takes values in $[0,1]$,
satisfies $\Psi(\varnothing)=0$ and $\Psi(J)=1$,
and has variation one.
It is continuous in the product topology on
$2^J\cong\{0,1\}^J$, where $\{0,1\}$ is discrete,
but is not a difference of two bounded submodular functions.
\end{theorem}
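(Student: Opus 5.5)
The elementary properties come first. Each summand $2^{-n}\ind_{\{|A\cap B_n|\ge4^n+1\}}$ is increasing in $A$, so $\Psi$ is increasing. At the empty set every indicator vanishes, and at $J$ every indicator equals one because $|B_n|=2\cdot4^n\ge4^n+1$. Since $\sum_n2^{-n}=1$, this gives $\Psi(\varnothing)=0$, $\Psi(J)=1$ and values in $[0,1]$. As in Proposition~\ref{prop:holder}, the variation sums over any chain telescope for an increasing function, so $\Var(\Psi)=1$.

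For product continuity, take $\varepsilon>0$ and choose $n_0$ with $\sum_{n>n_0}2^{-n}<\varepsilon$. The partial sum over $n\le n_0$ depends only on $A\cap(B_1\cup\cdots\cup B_{n_0})$, which is a finite set of coordinates. Hence it is locally constant in the product topology, and $\Psi$ is a uniform limit of continuous functions. It is therefore continuous.

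Now suppose, for contradiction, that $\Psi=g-h$ with $g,h$ bounded and submodular on $2^J$. Fix $n$ and a set $C\subseteq J\setminus B_n$. Consider the map $S\mapsto C\cup S$ from $2^{B_n}$ to $2^J$. It preserves unions and intersections, so $g_C(S)=g(C\cup S)$ and $h_C(S)=h(C\cup S)$ are submodular on $2^{B_n}$. Moreover $\osc(h_C)\le\osc(h)$, since the values of $h_C$ are values of $h$. Their difference is
\[
 \Psi(C\cup S)=c+2^{-n}\ind_{\{|S|\ge4^n+1\}},
\]
where $c$ collects the other blocks and does not depend on $S$. Absorb the constant into $g_C$, which preserves submodularity. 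Lemma~\ref{lem:threshold} then applies with $m=4^n$ and $a=2^{-n}$, and gives
\[
 \osc(h)\ge\osc(h_C)\ge2^{-n}4^n/2=2^{n-1}.
\]
Letting $n\to\infty$ contradicts the boundedness of $h$; taking $C=\varnothing$ is enough.

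The main point is the restriction step: the other blocks must contribute only a constant, which holds because the blocks are disjoint. Apart from that, the argument is a direct application of Lemma~\ref{lem:threshold}, and I expect no serious obstacle.
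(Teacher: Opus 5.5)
Your proof is correct and follows essentially the same route as the paper. It establishes the elementary properties by telescoping and a tail bound, then restricts $g,h$ to a single block (the paper takes $C=\varnothing$ directly) and applies Lemma~\ref{lem:threshold} with $m=4^n$, $a=2^{-n}$ to obtain $\osc(h)\ge 2^{n-1}$ for every $n$.
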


\begin{proof}
The $n$th summand is nonnegative and bounded above by
$2^{-n}$. Since $\sum_{n\ge1}2^{-n}=1$, the series
converges uniformly on $2^J$ and takes values in $[0,1]$.
Every summand is increasing, vanishes at $\varnothing$,
and has value $2^{-n}$ at $J$.
Thus $\Psi$ is increasing, with $\Psi(\varnothing)=0$
and $\Psi(J)=1$.
For every finite chain
$\varnothing=A_0\subseteq\cdots\subseteq A_r=J$,
monotonicity gives
\[
 \sum_{i=1}^r|\Psi(A_i)-\Psi(A_{i-1})|
 =\Psi(J)-\Psi(\varnothing)=1.
\]
Hence $\Var(\Psi)=1$.
To verify product continuity explicitly, put
$J_N=\bigcup_{n=1}^N B_n$.
If $A,C\subseteq J$ agree on $J_N$, their first $N$
summands coincide, and therefore
\[
 |\Psi(A)-\Psi(C)|
 \le\sum_{n>N}2^{-n}=2^{-N}.
\]
For fixed $A$, the sets $C$ agreeing with $A$ on the
finite set $J_N$ form an open neighborhood of $A$
in the product topology. Since $2^{-N}\to0$,
the displayed estimate proves continuity at every
$A\subseteq J$.

Suppose, for a contradiction, that $\Psi=g-h$, where
$g,h:2^J\to\R$ are bounded and submodular.
Fix $n\ge1$, and restrict both functions to subsets
of $B_n$. These restrictions remain submodular,
because unions and intersections of subsets of $B_n$
are unchanged when viewed as subsets of $J$.
For $S\subseteq B_n$, every summand defining $\Psi(S)$
except the $n$th is zero: if $j\ne n$, then
$S\cap B_j=\varnothing$.
Hence the restricted decomposition satisfies
\[
 g(S)-h(S)
 =2^{-n}\ind_{\{|S|\ge4^n+1\}}
 \qquad(S\subseteq B_n).
\]
Lemma~\ref{lem:threshold}, with $a=2^{-n}$ and $m=4^n$,
gives
\[
 \osc(h)\ge\osc\bigl(h|_{2^{B_n}}\bigr)
 \ge\frac{2^{-n}4^n}{2}=2^{n-1}.
\]
Since this holds for every $n$, it contradicts boundedness
of $h$. Therefore $\Psi$ is not a difference of two
bounded submodular functions.
\end{proof}


{
In this example, the individual block costs are unbounded.
} We now show that even uniformly bounded block costs
can prevent a bounded submodular difference when their sum
diverges. Let $(B_n)_{n\ge1}$ be pairwise disjoint finite sets
with $|B_n|=N_n\ge2$, and let
$f_n:\{0,\ldots,N_n\}\to\R$ satisfy
$f_n(0)=0$ and $\sum_{n\ge1}\osc(f_n)<\infty$.
On $J=\bigsqcup_{n\ge1}B_n$, define
\[
 \Psi(A)=\sum_{n=1}^{\infty}f_n(|A\cap B_n|),
 \qquad A\subseteq J.
\]
Since $f_n(0)=0$, we have
$|f_n(j)|\le\osc(f_n)$ for every $j$.
Thus the series converges absolutely and uniformly.
Each summand depends on finitely many coordinates, so
$\Psi$ is continuous in the product topology.
Put
$\delta_{n,k}=
(f_n(k+1)-2f_n(k)+f_n(k-1))_+$ and
\[
 H_n^*(j)=\sum_{k=1}^{N_n-1}K_{N_n}(j,k)\delta_{n,k},
 \qquad
 c_n=\max_{0\le j\le N_n}H_n^*(j).
\]
By Theorem~\ref{thm:finite}, $c_n$ is the minimum oscillation
of the subtracted component in a submodular difference
decomposition of the $n$th block function.
The following theorem shows that these costs add exactly.

\begin{theorem}\label{thm:blocks}
The function $\Psi$ is a difference of two bounded submodular
functions if and only if $\sum_{n\ge1}c_n<\infty$.
More precisely,
\[
 \inf_{\substack{\Psi=g-h\\
                  g,h\ \text{bounded and submodular}}}
       \osc(h)
 =\sum_{n=1}^{\infty}c_n,
\]
where the infimum is $+\infty$ if no such decomposition exists.
Whenever the value is finite, the infimum is attained.
\end{theorem}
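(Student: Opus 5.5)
The plan is to prove the two inequalities separately. The upper bound comes from gluing together the optimal blockwise decompositions of Theorem~\ref{thm:finite}. The lower bound comes from a product-averaging argument that adds up the blockwise lower bounds one block at a time.

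\emph{Upper bound and attainment.} Assume $\sum_n c_n<\infty$. For each $n$, put $G_n^*(j)=f_n(j)+H_n^*(j)$; these are the optimal concave profiles constructed in the proof of Theorem~\ref{thm:finite}. Define
\[
 h(A)=\sum_{n\ge1}H_n^*(|A\cap B_n|),\qquad g(A)=\Psi(A)+h(A).
\]
Since $0\le H_n^*\le c_n$, the series for $h$ converges uniformly and $0\le h\le\sum_n c_n$. Hence $h$ is bounded, and so is $g$, because $\Psi$ is bounded.

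Each summand $A\mapsto H_n^*(|A\cap B_n|)$ is submodular on $2^J$. Indeed, $A\mapsto A\cap B_n$ preserves unions and intersections, and a concave cardinality profile gives a submodular function by Lemma~\ref{lem:submodularity}(ii). The same holds for $A\mapsto G_n^*(|A\cap B_n|)$, and $g$ is the sum of these. Partial sums of submodular functions are submodular, and submodularity is preserved under pointwise limits, so $g$ and $h$ are bounded submodular with $\Psi=g-h$.

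The infimum of $h$ is $0$, attained at $\varnothing$. For each $n$, choose $A$ with $|A\cap B_n|=j_n$, where $j_n$ maximizes $H_n^*$. At this $A$, every summand equals its maximum $c_n$, so $h(A)=\sum_n c_n$. Thus $\osc(h)=\sum_n c_n$, which proves both the upper bound and attainment.

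\emph{Lower bound.} Let $\Psi=g-h$ with $g,h$ bounded and submodular, and fix $M$. It suffices to show $\osc(h)\ge\sum_{n\le M}c_n$.

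Restrict $g$ and $h$ to subsets of $J_M=B_1\cup\dots\cup B_M$, and average them over the product group $\Sym(B_1)\times\cdots\times\Sym(B_M)$. The averages depend only on the vector $x=(j_1,\dots,j_M)$ with $j_n=|A\cap B_n|$. Call them $\overline G(x)$ and $\overline H(x)$. On such sets $\Psi(A)=\sum_{n\le M}f_n(j_n)$, so
\[
 \overline G-\overline H=\sum_{n\le M}f_n(j_n).
\]
Moreover, every value of $\overline H$ lies in $[\inf h,\sup h]$.

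Next I claim that, for each fixed $n$ and each fixed choice of the other coordinates, both $\overline G$ and $\overline H$ are concave in $j_n$. To see this, average the submodularity inequality applied to $R\cup\{a\}$ and $R\cup\{b\}$, with $a,b\in B_n$, exactly as in the proof of Lemma~\ref{lem:finite}. The averaging over the other factors of the product group only averages valid inequalities, so concavity survives.

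Along coordinate $n$, the difference $\overline G-\overline H$ equals $f_n(j_n)$ plus a constant. The kernel argument in the proof of Theorem~\ref{thm:finite} therefore gives, for every value of the other coordinates,
\[
 \overline H(\dots,j,\dots)-\Bigl(1-\tfrac{j}{N_n}\Bigr)\overline H(\dots,0,\dots)-\tfrac{j}{N_n}\overline H(\dots,N_n,\dots)\ge H_n^*(j).
\]

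\emph{Combining the blocks.} This is the step I expect to be the main obstacle, since the linear-interpolation terms must be handled jointly across all $M$ blocks. I would use a hybrid-distribution argument.

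For each $n$, let $j_n^*$ maximize $H_n^*$, so $H_n^*(j_n^*)=c_n$, and put $\theta_n=j_n^*/N_n$. For $0\le r\le M$, let $\mu_r$ be the product distribution on coordinate vectors defined as follows:
\begin{itemize}
\item coordinates $n\le r$ are fixed at $j_n^*$;
\item each coordinate $n>r$ is independently $N_n$ with probability $\theta_n$ and $0$ otherwise.
\end{itemize}
Passing from $\mu_{r-1}$ to $\mu_r$ changes only coordinate $r$. Conditioning on the remaining coordinates and applying the displayed inequality gives
\[
 \mathbb E_{\mu_r}\overline H-\mathbb E_{\mu_{r-1}}\overline H\ge c_r.
\]
Telescoping over $r=1,\dots,M$ yields
\[
 \sup h\ \ge\ \overline H(j^*)=\mathbb E_{\mu_M}\overline H\ \ge\ \mathbb E_{\mu_0}\overline H+\sum_{n\le M}c_n\ \ge\ \inf h+\sum_{n\le M}c_n.
\]
Letting $M\to\infty$ gives $\osc(h)\ge\sum_n c_n$. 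In particular, if $\sum_n c_n=\infty$, then no bounded submodular decomposition of $\Psi$ exists.
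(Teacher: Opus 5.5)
Your proposal is correct and follows the paper's proof essentially step by step. For the upper bound you glue the blockwise optimal cardinality decompositions. For the lower bound you average over $\prod_{n\le M}\Sym(B_n)$, apply the kernel identity on each coordinate slice, and telescope through independent two-point randomizations $Z_n\in\{0,N_n\}$. The only difference is that you fix the leading coordinates and randomize the trailing ones, whereas the paper does the reverse, which is immaterial.
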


\begin{proof}
Suppose first that $\sum_{n\ge1}c_n<\infty$.
By Theorem~\ref{thm:finite}, both $H_n^*$ and
$G_n^*=f_n+H_n^*$ are concave profiles.
Moreover, $H_n^*$ is nonnegative, vanishes at $0,N_n$,
and has maximum $c_n$.
Thus, for $0\le j\le N_n$,
\[
 |f_n(j)|\le\osc(f_n),\qquad
 0\le H_n^*(j)\le c_n,\qquad
 |G_n^*(j)|\le\osc(f_n)+c_n.
\]
Consequently, the series
\[
 h(A)=\sum_{n=1}^{\infty}H_n^*(|A\cap B_n|),
 \qquad
 g(A)=\sum_{n=1}^{\infty}G_n^*(|A\cap B_n|)
\]
converge absolutely and uniformly on $2^J$ and define
bounded functions satisfying $g-h=\Psi$.
Each summand is submodular: its cardinality profile is
concave on $\{0,\ldots,N_n\}$, and the map $A\mapsto A\cap B_n$
preserves unions and intersections.
Finite partial sums are therefore submodular, and passing
to the limit in their submodular inequalities shows that
$g,h$ are submodular as well.
Since $h\ge0$ and $h(\varnothing)=0$, its minimum is zero.
For each $n$, choose $j_n$ maximizing $H_n^*$ and a set
$S_n\subseteq B_n$ of size $j_n$.
Then $A_*=\bigcup_{n\ge1}S_n$ satisfies
$h(A_*)=\sum_{n\ge1}c_n$, while $h(A)\le\sum_{n\ge1}c_n$
for every $A\subseteq J$.
Hence $\osc(h)=\sum_{n\ge1}c_n$.

We next prove the matching lower bound for an arbitrary
bounded submodular decomposition $\Psi=g-h$.
Fix $M\ge1$ and put $J_M=\bigcup_{n=1}^M B_n$.
Restrict $g,h$ to subsets of $J_M$, leaving all other
blocks empty. These restrictions remain submodular, and
$f_n(0)=0$ gives
$\Psi(A)=\sum_{n=1}^Mf_n(|A\cap B_n|)$ for
$A\subseteq J_M$.
Average the restrictions over the finite group
$\prod_{n=1}^M\Sym(B_n)$ of permutations acting separately
within the blocks. The averaged functions remain submodular
and depend only on the vector of block cardinalities.
Write their profiles as $G(j_1,\ldots,j_M)$ and
$H(j_1,\ldots,j_M)$, where $0\le j_n\le N_n$.
Since $\Psi$ is invariant under these permutations,
\[
 G(j_1,\ldots,j_M)-H(j_1,\ldots,j_M)
 =\sum_{n=1}^Mf_n(j_n).
\]
Every value of $H$ is an average of values of the original
function $h$, and hence lies between
$\inf_{A\subseteq J}h(A)$ and $\sup_{A\subseteq J}h(A)$.

Fix a coordinate $n$ and all the other coordinates.
The corresponding slices of $G$ and $H$ are concave
sequences. Indeed, fix subsets of the other blocks with
the prescribed cardinalities and apply submodularity to
two sets whose parts in $B_n$ have size $j$, intersection
size $j-1$, and union size $j+1$.
For the slice $Q_j=H(j_1,\ldots,j_{n-1},j,
j_{n+1},\ldots,j_M)$, define
$\kappa_k=2Q_k-Q_{k-1}-Q_{k+1}$.
Concavity of $Q$ gives $\kappa_k\ge0$.
The corresponding slice of $G$ equals
$Q_j+f_n(j)$ plus a constant, so its concavity also
gives
$\kappa_k\ge
f_n(k+1)-2f_n(k)+f_n(k-1)$.
Therefore $\kappa_k\ge\delta_{n,k}$.
The discrete kernel identity from
Theorem~\ref{thm:finite} now yields
\[
 \begin{aligned}
 Q_j-\left(1-\frac j{N_n}\right)Q_0
       -\frac j{N_n}Q_{N_n}
 &=\sum_{k=1}^{N_n-1}K_{N_n}(j,k)\kappa_k\\
 &\ge H_n^*(j).
 \end{aligned}
\]
This inequality holds for every choice of the other
coordinates.

For each $1\le n\le M$, choose $j_n$ with
$H_n^*(j_n)=c_n$.
Let $Z_1,\ldots,Z_M$ be independent random variables with
$\mathbb P(Z_n=N_n)=j_n/N_n$ and
$\mathbb P(Z_n=0)=1-j_n/N_n$.
Set $V_0=H(j_1,\ldots,j_M)$ and
\[
 V_k=\mathbb E\,
 H(Z_1,\ldots,Z_k,j_{k+1},\ldots,j_M)
 \qquad(1\le k\le M).
\]
For each fixed realization of $Z_1,\ldots,Z_{k-1}$,
apply the slice inequality in coordinate $k$ at $j_k$.
The endpoint interpolation is precisely the expectation
over $Z_k$, and the added term is
$H_k^*(j_k)=c_k$, independently of the other coordinates.
Taking expectations therefore gives
$V_{k-1}\ge V_k+c_k$.
Summing these inequalities yields
\[
 H(j_1,\ldots,j_M)
 -\mathbb E\,H(Z_1,\ldots,Z_M)
 \ge\sum_{n=1}^M c_n.
\]
Both terms on the left lie between $\inf h$ and $\sup h$,
so $\osc(h)\ge\sum_{n=1}^M c_n$.
Since $M$ is arbitrary and $c_n\ge0$, we conclude that
$\osc(h)\ge\sum_{n\ge1}c_n$.
In particular, a bounded decomposition can exist only when
this sum is finite. Together with the construction above,
this proves the criterion, the exact cost, and attainment.
\end{proof}

For strict majority blocks, let $N_n=2m_n$ and
$f_n(j)=a_n\ind_{\{j\ge m_n+1\}}$, where
$m_n\ge1$, $a_n\ge0$, and $\sum_{n\ge1}a_n<\infty$.
Lemma~\ref{lem:threshold} gives $c_n=a_nm_n/2$ when
$a_n>0$, and the same formula is immediate when $a_n=0$.
Thus Theorem~\ref{thm:blocks} gives
\[
 \Psi\text{ is a bounded submodular difference}
 \quad\Longleftrightarrow\quad
 \sum_{n=1}^{\infty}a_nm_n<\infty,
 \qquad
 \inf\osc(h)=\frac12\sum_{n=1}^{\infty}a_nm_n,
\]
where the infimum is over bounded submodular decompositions
$\Psi=g-h$, with the same $+\infty$ convention.
In particular, taking $a_n=2^{-n}$ and $m_n=2^n$ gives
an increasing, product-continuous function with endpoint
values zero and one, and hence variation one.
Every block has cost exactly $1/2$, but the sum of these
costs diverges. Thus the function has no bounded submodular
difference even though its individual block costs are
uniformly bounded.

\section{Monotonic decompositions of graph cuts}\label{sec:cuts}

We now study monotonic sum-decompositions of graph cut functions,
with both components normalized at the empty set.
We first establish the formula for $K_5$-free graphs.
The proof also identifies the equality conditions, which we
then use to characterize the symmetrized optimal components.
After proving necessity of the graph condition, we derive
the planar and computational consequences through linear
programming duality.

We retain the notation $G=(V,E)$, $w\ge0$, and
$W=\sum_{e\in E}w_e$ from the introduction.
Recall that the coverage function of a nonempty set
$Q\subseteq V$ is
$C_Q(S)=\ind_{\{S\cap Q\ne\varnothing\}}$.
{
It is increasing, submodular, and zero at the empty set
by Lemma~\ref{lem:submodularity}(iv).
}
Nonnegative combinations of coverage functions will provide
the required upper-bound construction.

\begin{proof}[Proof of the formula for $K_5$-free graphs]
{
Lemma~\ref{lem:known-cuts}(ii) gives
$N_G(w)\le W-\nu_w^*(G)$ for every finite simple graph,
with the explicit decomposition~\eqref{eq:optimizer}.
}

For the reverse inequality, assume that $G$ is $K_5$-free
and let $d_w=u+v$ be any normalized monotonic
sum-decomposition.
Lemma~\ref{lem:cutsym} gives an increasing submodular
function $f$ satisfying~\eqref{eq:cutidentity},
$f(\varnothing)=0$, and $f(V)=u(V)$.
{
Apply Lemma~\ref{lem:known-cuts}(i) to the decomposition
$d_w=f+(f(V\setminus\cdot)-f(V))$.
Writing $a_U$ for the multilinear coefficients of $f$,
we have $a_U=0$ for every nonclique $U$.
The $K_5$-free assumption therefore bounds the degree by four.
}

Writing $\mathbf 1$ for the all-ones vector,
the complement identity becomes
\[
 f(x)+f(\mathbf 1-x)-f(\mathbf 1)
 =\sum_{e=ij\in E}w_e(x_i+x_j-2x_ix_j).
\]
Both sides are multilinear, so their equality on
$\{0,1\}^V$ implies equality of coefficients.
For $|U|=4$, the coefficient of
$\prod_{i\in U}x_i$ on the left is $2a_U$:
the term from $f(x)$ contributes $a_U$, and the term
from $f(\mathbf 1-x)$ contributes $(-1)^4a_U=a_U$.
There are no contributions from larger sets, because
the degree is already at most four.
The right-hand side has degree at most two, so $a_U=0$
for every $|U|=4$.
Together with $f(\varnothing)=0$ and clique support,
this gives
\begin{equation}\label{eq:cubic}
 f(x)=\sum_{i\in V}a_ix_i
      +\sum_{e=ij\in E}b_ex_ix_j
      +\sum_{T\in\T}c_T\prod_{i\in T}x_i.
\end{equation}
For an edge $e=ij$, the coefficient of $x_ix_j$ in
$f(x)+f(\mathbf 1-x)$ is
$2b_e+\sum_{T\supseteq e}c_T$.
Comparing with the cut polynomial yields
\[
 2b_e+\sum_{T\supseteq e}c_T=-2w_e,
 \qquad
 b_e=-w_e-\frac12\sum_{T\supseteq e}c_T.
\]

We now use submodularity to bound the cubic coefficients.
For $e=ij$ and $S\subseteq V\setminus\{i,j\}$,
equation~\eqref{eq:cubic} gives
\[
 \Delta_{ij}f(S)
 =b_e+\sum_{\substack{T=\{i,j,k\}\in\T\\ k\in S}}c_T.
\]
Distinct triangles containing $e$ have distinct third
vertices. We may therefore choose
$S_e=\{k:\{i,j,k\}\in\T,\ c_{\{i,j,k\}}>0\}$
to retain exactly the positive terms.
Submodularity and the formula for $b_e$ then give
\[
 \begin{aligned}
 0\ge\Delta_{ij}f(S_e)
 &=-w_e-\frac12\sum_{T\supseteq e}c_T
       +\sum_{\substack{T\supseteq e\\c_T>0}}c_T\\
 &=-w_e+\frac12\sum_{T\supseteq e}|c_T|.
 \end{aligned}
\]
Hence $\sum_{T\supseteq e}|c_T|\le2w_e$ for every edge.
Define $p_T=(c_T)_+/2$.
Then $p_T\ge0$ and
$\sum_{T\supseteq e}p_T
 \le\frac12\sum_{T\supseteq e}|c_T|\le w_e$,
so $p$ is a feasible fractional triangle packing.
In particular, $\sum_Tp_T\le\nu_w^*(G)$.

It remains to relate this packing to the objective value.
Write $\deg_w(i)=\sum_{e\ni i}w_e$.
In the sum over edges incident with $i$, each triangle
containing $i$ is counted twice. Thus
$\sum_{e\ni i}b_e=-\deg_w(i)-\sum_{T\ni i}c_T$.
Using monotonicity of $f$ at the full vertex set, we obtain
\[
 \begin{aligned}
 0
 &\le f(V)-f(V\setminus\{i\})\\
 &=a_i+\sum_{e\ni i}b_e+\sum_{T\ni i}c_T
 =a_i-\deg_w(i).
 \end{aligned}
\]
Set $s_i=a_i-\deg_w(i)\ge0$.
Summing over vertices gives
$\sum_i a_i=2W+\sum_i s_i$.
Similarly, each triangle is counted three times when
the formula for $b_e$ is summed over all edges, so
$\sum_e b_e=-W-\frac32\sum_Tc_T$.
Evaluating~\eqref{eq:cubic} at $\mathbf 1$ therefore yields
$f(V)=W+\sum_i s_i-\frac12\sum_Tc_T$.
Since
$c_T=(c_T)_+-(-c_T)_+$ and $p_T=(c_T)_+/2$,
we obtain the gap identity
\begin{equation}\label{eq:gap}
 \begin{aligned}
 f(V)-\bigl(W-\nu_w^*(G)\bigr)
 &=\sum_{i\in V}s_i
   +\frac12\sum_{T\in\T}(-c_T)_+\\
 &\quad+\left(\nu_w^*(G)-\sum_{T\in\T}p_T\right).
 \end{aligned}
\end{equation}
All terms on the right are nonnegative.
Since $f(V)=u(V)$, every normalized monotonic
sum-decomposition has objective value at least
$W-\nu_w^*(G)$.
Together with the upper-bound construction, this proves
\eqref{eq:graphformula} for $K_5$-free graphs.
In particular, the pair in~\eqref{eq:optimizer} is optimal
whenever $p$ is a maximum fractional triangle packing.
\end{proof}

The gap identity~\eqref{eq:gap} also characterizes equality
in the lower bound. We call an increasing submodular function
$f:2^V\to\R$ admissible if $f(\varnothing)=0$ and
it satisfies~\eqref{eq:cutidentity}, and optimal if, in addition,
$f(V)=N_G(w)$. The next proposition establishes the bijection
in Theorem~\ref{thm:graphs} between maximum fractional triangle
packings and these symmetrized optimal components.

\begin{proposition}\label{prop:optimizers}
Let $G=(V,E)$ be a finite simple $K_5$-free graph with
nonnegative edge weights $w$.
An admissible function $f$ is optimal if and only if
$f=f_p$ in~\eqref{eq:optimizer} for a maximum fractional
triangle packing $p$.
The packing $p$ is uniquely determined by $f$.
Moreover, the symmetrization in Lemma~\ref{lem:cutsym}
sends every optimal normalized monotonic sum-decomposition
to such a function $f_p$.
\end{proposition}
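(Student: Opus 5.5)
The plan is to read off everything from the gap identity~\eqref{eq:gap} and the coefficient analysis in the proof of the $K_5$-free formula.

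\emph{Necessity.} Let $f$ be admissible and optimal. Then $f$ has the cubic form~\eqref{eq:cubic}, with $b_e=-w_e-\frac12\sum_{T\supseteq e}c_T$. Since $f(V)=N_G(w)=W-\nu_w^*(G)$, every nonnegative term in~\eqref{eq:gap} vanishes. This gives:
\begin{enumerate}
\item $s_i=0$, so $a_i=\deg_w(i)$;
\item $c_T\ge0$ for every triangle $T$, so $c_T=2p_T$;
\item $p=(c_T/2)_T$ is a maximum fractional triangle packing.
\end{enumerate}
It remains to check that $f=f_p$ coefficient by coefficient. Expanding a coverage function gives $C_Q(x)=1-\prod_{i\in Q}(1-x_i)$. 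Hence $C_e=x_i+x_j-x_ix_j$, and $C_T$ has linear coefficients $1$, pair coefficients $-1$, and triple coefficient $+1$. Therefore $f_p$ has the following coefficients:
\begin{itemize}
\item triangle coefficient $2p_T=c_T$;
\item edge coefficient $-r_e-2\sum_{T\supseteq e}p_T=-w_e-\sum_{T\supseteq e}p_T=b_e$;
\item vertex coefficient $\sum_{e\ni i}r_e+2\sum_{T\ni i}p_T=\deg_w(i)-\sum_{e\ni i}\sum_{T\supseteq e}p_T+2\sum_{T\ni i}p_T=\deg_w(i)$.
\end{itemize}
The last equality uses that each triangle through $i$ contains exactly two edges at $i$. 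Since multilinear representations are unique, $f=f_p$.

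\emph{Sufficiency.} Let $p$ be maximum. By Lemma~\ref{lem:known-cuts}(ii), $d_w=f_p+g_p$ is a normalized monotonic sum-decomposition. The converse part of Lemma~\ref{lem:cutsym} then shows that $f_p$ is admissible, because $g_p=v_{f_p}$. Its value is $f_p(V)=W-\sum_Tp_T=W-\nu_w^*(G)=N_G(w)$, so $f_p$ is optimal.

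\emph{Uniqueness of $p$.} The triangle coefficient of $f_p$ in its unique multilinear expansion is $2p_T$. Hence $f$ determines $p$.

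\emph{Symmetrization.} Let $d_w=u+v$ be an optimal normalized decomposition. Lemma~\ref{lem:cutsym} produces an admissible $f$ with $f(V)=u(V)=N_G(w)$. This $f$ is optimal, so the necessity part gives $f=f_p$ for a maximum packing $p$.

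The only delicate step is the coefficient bookkeeping in $f_p$, specifically the vertex coefficient, where triangle contributions must cancel exactly. Everything else follows from the vanishing of the gap terms.
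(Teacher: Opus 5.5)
Your proposal is correct and follows the paper's proof essentially step for step: the three gap terms in~\eqref{eq:gap} vanish, $f$ is matched coefficient by coefficient with the coverage expansion of $f_p$, $p$ is read off the triangle coefficients, and Lemma~\ref{lem:cutsym} handles the symmetrization claim. One small correction: in the sufficiency step, admissibility of $f_p$ comes directly from $d_w=f_p+g_p$ with $g_p(S)=f_p(V\setminus S)-f_p(V)$, which is literally~\eqref{eq:cutidentity}; it does not come from the converse part of Lemma~\ref{lem:cutsym}, which runs in the opposite direction.
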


\begin{proof}
Let $f$ be admissible.
The preceding coefficient argument gives its cubic
representation~\eqref{eq:cubic}, together with
\[
 b_e=-w_e-\frac12\sum_{T\supseteq e}c_T,\qquad
 \sum_{T\supseteq e}|c_T|\le2w_e
 \quad(e\in E),
\]
and $s_i=a_i-\deg_w(i)\ge0$ for every $i\in V$.
In particular, $\widehat p_T=(c_T)_+/2$ defines a feasible
fractional triangle packing, and~\eqref{eq:gap} reads
\[
 f(V)-\bigl(W-\nu_w^*(G)\bigr)
 =\sum_{i\in V}s_i
  +\frac12\sum_{T\in\T}(-c_T)_+
  +\left(\nu_w^*(G)-\sum_{T\in\T}\widehat p_T\right).
\]
If $f$ is optimal, the left-hand side is zero because
$N_G(w)=W-\nu_w^*(G)$.
Each term on the right is nonnegative, so all three vanish.
Since the sums are finite, this implies
$a_i=\deg_w(i)$ for every $i\in V$,
$c_T\ge0$ for every $T\in\T$, and
$\frac12\sum_{T\in\T}c_T=\nu_w^*(G)$.
Set $p_T=c_T/2$.
The edge inequalities above give
$\sum_{T\supseteq e}p_T\le w_e$, so $p$ is feasible;
the equality of its total weight with $\nu_w^*(G)$
makes it maximum. Moreover,
$b_e=-w_e-\sum_{T\supseteq e}p_T$.

We now identify $f$ with the function supplied by this
packing. Put $r_e=w_e-\sum_{T\supseteq e}p_T$.
For membership vectors $x\in\{0,1\}^V$, the identity
$C_Q(x)=1-\prod_{i\in Q}(1-x_i)$ expands the coverage
functions in~\eqref{eq:optimizer}.
The coefficient of $x_i$ in $f_p$ is
$\sum_{e\ni i}r_e+2\sum_{T\ni i}p_T$.
Each triangle containing $i$ contains exactly two edges
incident with $i$, so this coefficient equals $\deg_w(i)$.
For an edge $e=ij$, the coefficient of $x_ix_j$ is
$-r_e-2\sum_{T\supseteq e}p_T
=-w_e-\sum_{T\supseteq e}p_T$.
Finally, the coefficient of $\prod_{i\in T}x_i$ is
$2p_T$, and there are no other nonzero coefficients.
Consequently,
\[
 \begin{aligned}
 f_p(x)
 &=\sum_{i\in V}\deg_w(i)x_i
   -\sum_{e=ij\in E}
     \left(w_e+\sum_{T\supseteq e}p_T\right)x_ix_j\\
 &\quad+2\sum_{T\in\T}p_T\prod_{i\in T}x_i.
 \end{aligned}
\]
These are exactly the coefficients of $f$.
Uniqueness of the multilinear representation therefore
gives $f=f_p$.

Conversely, let $p$ be a maximum fractional triangle packing.
The construction in~\eqref{eq:optimizer} gives an increasing
submodular function $f_p$ that vanishes at the empty set
and satisfies~\eqref{eq:cutidentity}.
Its value at $V$ is
$f_p(V)=W-\sum_Tp_T=W-\nu_w^*(G)=N_G(w)$,
so it is optimal.
The coefficient of the monomial belonging to $T$ is
$2p_T$, and hence $f_p$ determines every coordinate of $p$.
Equivalently, for each $T\in\T$,
\[
 p_T=\frac12\sum_{R\subseteq T}(-1)^{3-|R|}f_p(R).
\]
This proves uniqueness of the packing associated with
the given function.

Finally, if $d_w=u+v$ is an optimal normalized monotonic
sum-decomposition, Lemma~\ref{lem:cutsym} produces an
admissible $f$ with $f(V)=u(V)=N_G(w)$.
The characterization above therefore gives $f=f_p$
for a uniquely determined maximum fractional triangle
packing $p$.
\end{proof}\begin{proof}[Completion of the proof of Theorem~\ref{thm:graphs}]
It remains to prove necessity of the graph condition.
For any graph containing a $K_5$ subgraph, we will construct
a nonnegative edge weighting for which
\eqref{eq:graphformula} fails.
{
Let $U=V(K_5)$. Lemma~\ref{lem:known-cuts}(iii) gives
$N_{K_5}=\max_{S\subseteq U}|S|(5-|S|)=6$.
}
To compute the packing value, note that $K_5$ has ten
triangles and each edge belongs to three of them.
Assigning weight $1/3$ to each triangle therefore gives
a feasible fractional triangle packing of total weight
$10/3$. Conversely, summing the ten edge-capacity constraints
of any feasible packing counts each triangle three times
and gives $3\sum_Tp_T\le10$.
Thus $\nu^*(K_5)=10/3$.

Now let $G$ contain a $K_5$ on vertex set $U$.
Assign weight one to the ten edges within $U$ and weight
zero to all other edges. Identifying $G[U]$ with $K_5$,
we have $d_w(S)=d_{K_5}(S\cap U)$ for every
$S\subseteq V(G)$.
If $d_w=u+v$ is a normalized monotonic sum-decomposition,
its restriction to $2^U$ is a normalized monotonic
sum-decomposition of $d_{K_5}$, because restriction
preserves submodularity and the required monotonicity.
Consequently,
$N_{K_5}\le u(U)\le u(V(G))$.
Taking the infimum over all such decompositions gives
$N_{K_5}\le N_G(w)$.
Conversely, let $d_{K_5}=u_0+v_0$ be the decomposition
{
of value six supplied by Lemma~\ref{lem:known-cuts}(iii), viewed on $2^U$.
}
Define
$u(S)=u_0(S\cap U)$ and $v(S)=v_0(S\cap U)$.
The map $S\mapsto S\cap U$ preserves unions, intersections,
and inclusion, so $u,v$ are submodular, $u$ is increasing,
and $v$ is decreasing.
Both vanish at the empty set, and
$u(S)+v(S)=d_{K_5}(S\cap U)=d_w(S)$.
Their objective value is $u(V(G))=u_0(U)=6$.
Hence $N_G(w)\le6=N_{K_5}$, and therefore $N_G(w)=6$.

Every triangle not contained in $U$ contains an edge
of weight zero. The capacity constraint on that edge,
together with nonnegativity of the packing variables,
forces the triangle's packing weight to be zero.
Thus restriction to the triangles in $U$ gives a bijection
between feasible packings for $(G,w)$ and feasible packings
for the unweighted $K_5$; the inverse extends a packing
by zero on all other triangles.
It follows that $\nu_w^*(G)=10/3$.
Since $W=10$, we obtain
\[
 N_G(w)=6<\frac{20}{3}=W-\nu_w^*(G).
\]
This nonnegative weighting violates~\eqref{eq:graphformula},
proving necessity.
Together with the formula already established for $K_5$-free
graphs and the characterization in
Proposition~\ref{prop:optimizers}, this completes the proof.
\end{proof}

Since every planar graph is $K_5$-free,
Theorem~\ref{thm:graphs} gives $N_G=|E|-\nu^*(G)$
in the unweighted case and $N_G(w)=W-\nu_w^*(G)$
for nonnegative edge weights.
This answers the planar-graph question
in~\cite[Section~8, item~2]{BGILS}, including its weighted
version. The dual of the triangle-packing program provides
a complementary description of the optimum, from which
edge-weight monotonicity follows directly.

\begin{corollary}\label{cor:monotone}
Let $G=(V,E)$ be a finite simple $K_5$-free graph, and put
\[
 P_G=\left\{y\in[0,1]^E:
       \sum_{e\subseteq T}y_e\ge1
       \text{ for every }T\in\T\right\}.
\]
For every $w\in\R_{\ge0}^E$,
\[
 N_G(w)=\max_{y\in P_G}\sum_{e\in E}w_e(1-y_e).
\]
In particular, if $0\le w'_e\le w_e$ for every $e\in E$,
then $N_G(w')\le N_G(w)$.
\end{corollary}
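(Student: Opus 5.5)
The plan is to apply linear programming duality to the packing program~\eqref{eq:packing} and then combine it with Theorem~\ref{thm:graphs}. Since $G$ is $K_5$-free, Theorem~\ref{thm:graphs} gives $N_G(w)=W-\nu_w^*(G)$. It therefore suffices to show that
\[
 \nu_w^*(G)=\min\Bigl\{\sum_{e\in E}w_ey_e:y\in P_G\Bigr\}.
\]
Once this holds, $W-\min_{y\in P_G}\sum_ew_ey_e=\max_{y\in P_G}\sum_ew_e(1-y_e)$, which is the claimed formula.

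The main step is the duality computation. The primal problem maximizes $\sum_Tp_T$ subject to $p\ge0$ and $\sum_{T\supseteq e}p_T\le w_e$. Its LP dual minimizes $\sum_ew_ey_e$ subject to $y\ge0$ and $\sum_{e\subseteq T}y_e\ge1$ for every $T\in\T$.

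\begin{enumerate}
\item The primal is feasible, since $p=0$ is admissible, and it is bounded by the compactness noted in the introduction. Strong duality therefore gives equality of the two optima, and the dual minimum is attained.
\item The dual feasible region has no upper bound $y_e\le1$, whereas $P_G\subseteq[0,1]^E$. I will show that the upper bound can be added without changing the minimum. If $y$ is dual feasible, then $y'=\min\{y,\mathbf 1\}$ is still feasible: for a triangle $T$, either some $y'_e=1$, which already gives $\sum_{e\subseteq T}y'_e\ge1$, or $y'=y$ on all of $T$. Because $w\ge0$, this truncation does not increase the objective.
\item Conversely, $P_G$ lies inside the dual feasible region. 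Hence the dual minimum equals $\min_{y\in P_G}\sum_ew_ey_e$.
\item When $\T=\varnothing$, $P_G=[0,1]^E$ and the minimum is $0=\nu_w^*(G)$. This case is consistent with the argument above and needs no separate treatment.
\end{enumerate}

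For monotonicity, fix $y\in P_G$. Then $1-y_e\ge0$ for every edge, so $\sum_ew'_e(1-y_e)\le\sum_ew_e(1-y_e)$ whenever $0\le w'\le w$. The set $P_G$ does not depend on $w$, so taking the maximum over $y\in P_G$ on both sides gives $N_G(w')\le N_G(w)$. The maximum exists because $P_G$ is a nonempty compact polytope: it contains $\mathbf 1$.

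I do not expect a real obstacle here. The only point needing care is step~(ii): the box constraint $y\le\mathbf 1$ in $P_G$ is not part of the natural dual and has to be justified by the truncation argument. That justification is exactly what makes the coefficients $1-y_e$ nonnegative, and the monotonicity conclusion depends on it.
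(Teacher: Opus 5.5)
Your proposal is correct and follows essentially the same route as the paper. It applies LP duality to the packing program, truncates dual variables at one to place the minimizer in the $w$-independent polytope $P_G$, and combines this with Theorem~\ref{thm:graphs}. Monotonicity then follows from the nonnegativity of the coefficients $1-y_e$.
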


\begin{proof}
{
The standard fractional packing--covering duality
\cite[p.~332]{BGILS} gives
}
\[
 \nu_w^*(G)=
 \min\left\{\sum_{e\in E}w_ey_e:
       y_e\ge0\ (e\in E),\quad
       \sum_{e\subseteq T}y_e\ge1\ (T\in\T)\right\}.
\]
To restrict this minimum to a bounded feasible set, take
any dual-feasible $y$ and put $\bar y_e=\min\{y_e,1\}$.
If a triangle contains an edge with $y_e>1$, the corresponding
coordinate of $\bar y$ equals one, so its covering constraint
remains satisfied. If it contains no such edge, its constraint
is unchanged. Hence $\bar y$ is feasible and belongs to $P_G$.
Since $w_e\ge0$ and $\bar y_e\le y_e$ for every edge,
$\sum_e w_e\bar y_e\le\sum_e w_ey_e$.
The dual optimum may therefore be computed over $P_G$.
This polytope is nonempty, because it contains the all-ones
vector, and compact, because it is a closed subset of
$[0,1]^E$. In particular, the restricted minimum is attained.
Theorem~\ref{thm:graphs} now gives
\[
 \begin{aligned}
 N_G(w)
 &=W-\nu_w^*(G)\\
 &=W-\min_{y\in P_G}\sum_{e\in E}w_ey_e
 =\max_{y\in P_G}\sum_{e\in E}w_e(1-y_e).
 \end{aligned}
\]

The feasible set $P_G$ depends only on $G$.
Moreover, $1-y_e\ge0$ for every $y\in P_G$ and $e\in E$.
Thus, if $0\le w'_e\le w_e$ for all edges, then
$\sum_e w'_e(1-y_e)\le\sum_e w_e(1-y_e)$ for every
$y\in P_G$.
Taking maxima over the same feasible set proves
$N_G(w')\le N_G(w)$.
\end{proof}
For $K_5$-free graphs, the primal and dual programs also
provide certificates of optimality.
Let $p$ be a feasible fractional triangle packing,
let $y\in P_G$, and put
$r_e=w_e-\sum_{T\supseteq e}p_T\ge0$.
The construction in~\eqref{eq:optimizer} gives an upper
bound $W-\sum_Tp_T$ on $N_G(w)$, while
Corollary~\ref{cor:monotone} gives the lower bound
$\sum_e w_e(1-y_e)$.
Their difference is
\[
 \begin{aligned}
 \left(W-\sum_{T\in\T}p_T\right)
 -\sum_{e\in E}w_e(1-y_e)
 &=\sum_{e\in E}w_ey_e-\sum_{T\in\T}p_T\\
 &=\sum_{e\in E}r_ey_e
   +\sum_{T\in\T}p_T
      \left(\sum_{e\subseteq T}y_e-1\right).
 \end{aligned}
\]
Every summand is nonnegative by primal and dual feasibility.
Thus the two bounds agree if and only if
$r_ey_e=0$ for every edge and
$p_T(\sum_{e\subseteq T}y_e-1)=0$ for every triangle.
These conditions certify that $p$ and $y$ are optimal
for the packing and covering programs, respectively,
and that the associated decomposition is optimal.
Conversely, if $p,y$ are optimal, strong duality makes
their objective values equal, so the displayed gap
vanishes and each summand is zero.
{
These are the standard complementary-slackness conditions~\cite[Chapter~7]{Schrijver}.
}

For rational edge weights, this description yields a
polynomial-time construction in the ordinary bit model.
The triangles can be enumerated by checking all vertex
triples, and the packing program has
$|\T|\le\binom{|V|}{3}$ variables and $|E|$ capacity
constraints, in addition to nonnegativity constraints.
The restricted dual has $|E|$ variables, $|\T|$ covering
constraints, and the bounds $0\le y_e\le1$.
Both programs have rational data of polynomial encoding
length. {
Rational linear programming therefore produces
optimal rational solutions $p,y$ in polynomial time,
with polynomial encoding length~\cite[Chapters~10 and~13]{Schrijver}.
}
Formula~\eqref{eq:optimizer} represents $f_p$ using at most
$|E|+|\T|$ coverage terms, and complementation gives $g_p$.
These representations have polynomial size and permit
exact polynomial-time evaluation at any specified subset
of $V$; they do not require listing all function values.
Primal and dual feasibility, together with equality of
their objective values, form a polynomial-size certificate
that can be checked using rational arithmetic in
polynomial time.

We now pass from the symmetrized optimal components to
all optimal pairs. Symmetrization leaves a perturbation
that is invariant under complementation.
On a $K_4$-free graph, clique support and this invariance
force the perturbation to be quadratic.
The remaining monotonicity and submodularity conditions
then give the incidence and capacity constraints in
Theorem~\ref{thm:allopt}.

\begin{proof}[Proof of Theorem~\ref{thm:allopt}]
Let $d_w=u+v$ be an optimal normalized monotonic
sum-decomposition on a $K_4$-free graph.
By Lemma~\ref{lem:cutsym} and
Proposition~\ref{prop:optimizers}, its symmetrization is
$f_p$ for a unique maximum fractional triangle packing $p$.
Since $v(V)=-u(V)$ and
$v(V\setminus S)=d_w(S)-u(V\setminus S)$, the
symmetrization formula gives
\[
 2f_p(S)=u(S)+d_w(S)-u(V\setminus S)+u(V).
\]
Writing the same identity for $V\setminus S$ and
subtracting yields
$f_p(S)-f_p(V\setminus S)=u(S)-u(V\setminus S)$.
Therefore $h=u-f_p$ satisfies
$h(S)=h(V\setminus S)$ for every $S\subseteq V$.
Also, $h(\varnothing)=0$ by normalization and
$h(V)=0$ because symmetrization preserves the objective.

We next determine the possible coefficients of $h$.
{
Lemma~\ref{lem:known-cuts}(i) implies that $u$ has
coefficients supported only on cliques.
}
The same is true of $f_p$, since the monomials in
each coverage function $C_e$ or $C_T$ are supported
on subsets of the corresponding edge or triangle.
Thus $h=u-f_p$ also has clique support.
This conclusion does not require $h$ to be submodular.
Since $G$ is $K_4$-free, $h$ has degree at most three.
In the identity $h(x)=h(\mathbf 1-x)$, each cubic
coefficient $c$ on the left has coefficient $-c$
on the right, with no contributions from higher-degree
terms. Hence every cubic coefficient is zero.
As $h(\varnothing)=0$, we may write
\[
 h(x)=\sum_{i\in V}\alpha_ix_i
      +\sum_{e=ij\in E}z_ex_ix_j.
\]

The coverage representation gives
$f_p(\{i\})=\sum_{e\ni i}r_e+2\sum_{T\ni i}p_T
=\deg_w(i)$.
Indeed, each triangle containing $i$ contributes to
exactly two of the residual edge weights in this sum.
It also gives $f_p(V)-f_p(V\setminus\{i\})=0$:
every edge or triangle still meets $V\setminus\{i\}$
after a single vertex is removed.
Since $h(\{i\})=\alpha_i$ and
$h(V\setminus\{i\})=h(\{i\})$, the prescribed
monotonicities imply
\[
 \begin{aligned}
 0&\ge v(\{i\})-v(\varnothing)
    =d_w(\{i\})-f_p(\{i\})-\alpha_i
    =-\alpha_i,\\
 0&\le u(V)-u(V\setminus\{i\})
    =-\alpha_i.
 \end{aligned}
\]
Thus $\alpha_i=0$ for every $i$.
Comparing linear coefficients in
$h(x)=h(\mathbf 1-x)$ gives
$2\alpha_i+\sum_{e\ni i}z_e=0$, and hence $Bz=0$.
Summing over vertices, with each edge counted twice,
also gives $\sum_{e\in E}z_e=0$.
We have therefore obtained the stated quadratic form
of $u-f_p$.

To determine the remaining constraints on $z$, fix
an edge $e=ij$ and put $P_e=\sum_{T\supseteq e}p_T$,
so that $r_e=w_e-P_e$.
The expansion of $f_p$ gives, for
$S\subseteq V\setminus\{i,j\}$,
\[
 \Delta_{ij}u(S)
 =-w_e-P_e
  +2\sum_{\substack{T=\{i,j,k\}\in\T\\k\in S}}p_T
  +z_e.
\]
Since $p_T\ge0$, the minimum over $S$ is
$-w_e-P_e+z_e$, attained at $S=\varnothing$.
The maximum is $-w_e+P_e+z_e$, attained by taking
$S=V\setminus\{i,j\}$.
Moreover, $\Delta_{ij}d_w(S)=-2w_e$, so submodularity
of both $u$ and $v=d_w-u$ is equivalent, on this edge,
to $-2w_e\le\Delta_{ij}u(S)\le0$ for every $S$.
Using the two attained extrema, these inequalities become
\[
 -w_e-P_e+z_e\ge-2w_e,
 \qquad
 -w_e+P_e+z_e\le0,
\]
or equivalently $-r_e\le z_e\le r_e$.
This proves necessity of all the stated conditions.

Conversely, let $p$ be a maximum fractional triangle
packing and let $z\in\R^E$ satisfy $Bz=0$ and
$|z_e|\le r_e$ for every edge.
Define
$h(S)=\sum_{e\subseteq S}z_e$,
$u=f_p+h$, and $v=g_p-h$.
Then $u+v=d_w$, and both components vanish at the
empty set.
The preceding second-difference calculation and
$|z_e|\le r_e$ give
$-2w_e\le\Delta_{ij}u(S)\le0$ on every edge.
Thus $\Delta_{ij}v(S)=-2w_e-\Delta_{ij}u(S)\le0$
there as well.
On nonedges, both second differences vanish because
$f_p$ and $h$ have clique support and
$\Delta_{ij}d_w(S)=0$.
The second-difference characterization of submodularity
therefore shows that $u,v$ are submodular.

It remains to verify monotonicity.
The coverage identities above and $Bz=0$ give
\[
 u(V)-u(V\setminus\{i\})
 =\sum_{e\ni i}z_e=0.
\]
Also, $h(\{i\})=0$ and
$f_p(\{i\})=\deg_w(i)=d_w(\{i\})$, so
$v(\{i\})-v(\varnothing)=0$.
For $S\subseteq V\setminus\{i\}$, diminishing returns
for the two submodular functions now yields
\[
 \begin{aligned}
 u(S\cup\{i\})-u(S)
 &\ge u(V)-u(V\setminus\{i\})=0,\\
 v(S\cup\{i\})-v(S)
 &\le v(\{i\})-v(\varnothing)=0.
 \end{aligned}
\]
Hence $u$ is increasing and $v$ is decreasing.
Since $Bz=0$ implies $\sum_ez_e=0$, we have
$u(V)=f_p(V)=N_G(w)$.
Thus the constructed pair is an optimal normalized
monotonic sum-decomposition.

Finally, the quadratic perturbation satisfies
\[
 h(\mathbf 1-x)
 =h(x)+\sum_{e\in E}z_e
       -\sum_{i\in V}(Bz)_ix_i
 =h(x).
\]
Together with $h(V)=0$, this shows that the perturbation
cancels under the symmetrization of
Lemma~\ref{lem:cutsym}.
Indeed, the symmetrized increasing component is
\[
 \frac{u(S)+v(V\setminus S)-v(V)}2=f_p(S).
\]
Proposition~\ref{prop:optimizers} therefore recovers $p$
uniquely from the pair $(u,v)$.
Once $p$ is fixed, uniqueness of the multilinear
representation recovers $z$ from the quadratic
coefficients of $u-f_p$.
This proves uniqueness of the parametrization.
\end{proof}
Theorem~\ref{thm:allopt} also gives a compact description of
the entire polytope of optimal decompositions.
Let $A$ be the edge--triangle incidence matrix, with
$A_{e,T}=1$ if $e\subseteq T$ and $A_{e,T}=0$ otherwise,
and let $B$ be the unsigned vertex-edge incidence matrix.
For fixed $w$, the map $(p,z)\mapsto(u,v)$ in that theorem
is affine and gives a bijection from
\[
 \mathcal P_w=
 \left\{(p,z):
 \begin{array}{l}
 p\ge0,\quad Ap\le w,\quad
 \mathbf 1^\top p=\nu_w^*(G),\\
 Bz=0,\quad -(w-Ap)\le z\le w-Ap
 \end{array}\right\}
\]
onto the polytope of optimal normalized monotonic
sum-decompositions. Here all vector inequalities are
coordinatewise, and $\mathbf 1$ is indexed by $\T$.
The two polytopes are therefore affinely isomorphic.
The parameter description uses $|\T|+|E|$ variables and
polynomially many constraints.
For rational $w$, the triangle-packing program computes
the rational value $\nu_w^*(G)$ in polynomial time,
and this value has polynomial encoding length.
Thus $\mathcal P_w$ has a description of encoding length
polynomial in the graph size and the binary encoding
length of $w$.

Fixing the packing isolates the freedom that remains after
symmetrization. Its dimension depends only on the edges
with positive residual capacity, leading to the following
uniqueness criterion.

\begin{corollary}\label{cor:uniqueness}
Let $G=(V,E)$ be a finite simple $K_4$-free graph with
nonnegative weights $w$, and let $p$ be a maximum fractional
triangle packing. Put
$r_e=w_e-\sum_{T\supseteq e}p_T$ and
$H_p=(V,\{e\in E:r_e>0\})$.
Let $b(H_p)$ denote the number of bipartite connected
components of $H_p$, including isolated vertices.
The affine dimension over $\R$ of the family of optimal
pairs with symmetrization $f_p$ is
\[
 |E(H_p)|-|V|+b(H_p).
\]
There is a unique optimal normalized monotonic
sum-decomposition if and only if the maximum fractional
triangle packing $p$ is unique and every connected
component of $H_p$ is either a tree or a connected graph
with exactly one cycle, that cycle being odd.
\end{corollary}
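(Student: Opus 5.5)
By Theorem~\ref{thm:allopt}, I will parametrize the optimal pairs with a fixed symmetrization $f_p$ and read off their dimension from the incidence rank in Lemma~\ref{lem:incidence-rank}. Proposition~\ref{prop:optimizers} shows that the packing $p$ is recovered uniquely from any optimal pair. So the optimal pairs whose symmetrization is $f_p$ are exactly the pairs produced by Theorem~\ref{thm:allopt} with this $p$ fixed. Their parameter set is
\[
 Z_p=\{z\in\R^E: Bz=0,\ |z_e|\le r_e\ (e\in E)\}.
\]
The map $z\mapsto(u,v)=(f_p+h_z,\,g_p-h_z)$, with $h_z(S)=\sum_{e\subseteq S}z_e$, is affine. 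It is injective, because the multilinear coefficients of $u-f_p$ recover $z$. Hence the affine dimension of the family equals $\dim Z_p$.

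To compute $\dim Z_p$, first note that $r_e=0$ forces $z_e=0$. Thus $Z_p$ lies in the subspace of vectors supported on $E(H_p)$. On that subspace, the condition $Bz=0$ says exactly that $B_{H_p}z'=0$, where $B_{H_p}$ is the unsigned incidence matrix of $H_p$ and $z'$ is the restriction of $z$ to $E(H_p)$. The remaining box constraints are $|z'_e|\le r_e$ with every $r_e>0$. Put $\rho=\min_{e\in E(H_p)}r_e>0$ (if $E(H_p)$ is empty, everything is trivial). Then $Z_p$ contains the ball of radius $\rho$ about $0$ inside $\ker B_{H_p}$, so $\dim Z_p=\dim\ker B_{H_p}$. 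Lemma~\ref{lem:incidence-rank} applied to $H_p$ gives this dimension as $|E(H_p)|-|V|+b(H_p)$. The only point needing care is this full-dimensionality of the box inside the kernel, and the strict positivity of the residuals on $H_p$ settles it.

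For the uniqueness criterion, first suppose there are two distinct maximum packings. Each yields an optimal pair by Theorem~\ref{thm:allopt} with $z=0$. These pairs are distinct, since their symmetrizations are $f_p\ne f_{p'}$ by the uniqueness part of Proposition~\ref{prop:optimizers}. So uniqueness of the decomposition forces $p$ to be unique. Conversely, if $p$ is unique, every optimal pair has symmetrization $f_p$. Then the decomposition is unique if and only if $|E(H_p)|-|V|+b(H_p)=0$.

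This quantity is additive over the connected components $C$ of $H_p$. Each component contributes $|E_C|-|V_C|+\ind_{\{C\text{ bipartite}\}}$. A connected graph has $|E_C|\ge|V_C|-1$, so every contribution is nonnegative, and the sum vanishes exactly when each contribution does. For a bipartite component, a zero contribution means $|E_C|=|V_C|-1$, so $C$ is a tree; isolated vertices are included here. For a nonbipartite component, it means $|E_C|=|V_C|$, so $C$ is connected with exactly one cycle, and that cycle must be odd for $C$ to be nonbipartite. Conversely, trees are bipartite and contribute zero, and unicyclic graphs with an odd cycle are nonbipartite and contribute zero. This gives the stated characterization.
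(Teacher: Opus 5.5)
Your proposal is correct and follows essentially the paper's own argument: forced zeros where $r_e=0$, the box being full-dimensional in $\ker B_{H_p}$, Lemma~\ref{lem:incidence-rank}, and $z=0$ together with Proposition~\ref{prop:optimizers} to separate distinct maximum packings. One small fix is needed: $|E_C|\ge|V_C|-1$ gives nonnegativity only for bipartite components, so for a nonbipartite component you should note that it contains an (odd) cycle, hence is not a tree and satisfies $|E_C|\ge|V_C|$ (alternatively, each component's contribution is the kernel dimension of its incidence matrix, which is automatically nonnegative).
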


\begin{proof}
By Theorem~\ref{thm:allopt}, fixing the symmetrized component
$f_p$ fixes the packing $p$, and the remaining parameters
satisfy $Bz=0$ and $|z_e|\le r_e$.
If $r_e=0$, then $z_e=0$.
After deleting these forced zero coordinates, the parameter
set becomes
\[
 \left\{z\in\ker B_{H_p}:
             |z_e|\le r_e\quad(e\in E(H_p))\right\},
\]
where $B_{H_p}$ is the unsigned incidence matrix of $H_p$,
with all vertices in $V$ retained.
If $E(H_p)\ne\varnothing$, every remaining $r_e$ is
positive, so the box contains a neighborhood of zero
relative to $\ker B_{H_p}$.
Its affine hull is therefore the whole kernel.
If $E(H_p)=\varnothing$, both the parameter set and the
kernel consist only of the zero vector, giving the same
conclusion. Since the affine parametrization is injective,
the family of optimal pairs has dimension
$\dim_{\R}\ker B_{H_p}$.

{
Lemma~\ref{lem:incidence-rank} gives
$\operatorname{rank}_{\R}B_{H_p}=|V|-b(H_p)$,
with isolated vertices counted as bipartite components.
}
Rank--nullity now gives
$\dim_{\R}\ker B_{H_p}=|E(H_p)|-|V|+b(H_p)$,
as claimed.

For a connected bipartite component with $n$ vertices
and $m$ edges, the kernel dimension is $m-n+1$.
It is zero exactly when $m=n-1$, which, by connectedness,
is equivalent to the component being a tree.
For a connected nonbipartite component, the kernel
dimension is $m-n$. It is zero exactly when $m=n$.
A spanning tree then leaves exactly one additional edge,
so the component has a unique cycle; that cycle must be
odd because the component is nonbipartite.
It follows that the family associated with a fixed
packing consists of a single pair exactly under the
stated component condition.
Finally, $z=0$ is feasible for every maximum packing,
and distinct maximum packings give distinct symmetrized
components by Proposition~\ref{prop:optimizers}.
Thus the entire optimal decomposition is unique if and
only if the maximum packing is unique and its associated
family has dimension zero.
\end{proof}

For example, on a cycle of length four there are no
triangles, so $p=0$ is the unique packing and $r=w$.
If all edge weights are positive, the vectors
$z=(t,-t,t,-t)$ in cyclic edge order satisfy $Bz=0$ and
$|z_e|\le r_e$ whenever $|t|\le\min_{e\in E}w_e$.
They give a nontrivial interval of optimal decompositions
with the same symmetrization.
Thus uniqueness of the maximum packing alone does not
imply uniqueness of the original pair.

The $K_4$-free assumption is also essential to the quadratic
description in Theorem~\ref{thm:allopt}.
When $K_4$ subgraphs are present, a perturbation invariant
under complementation can contain quartic terms.
To see this explicitly, take $G=K_4$ on vertices
$\{1,2,3,4\}$, give edges $12,34$ weight two, and give
the other four edges weight one.
Set $p_T=1/2$ for each of the four triangles.
Every edge belongs to two triangles, so this packing is
feasible. Each triangle contains exactly two weight-one
edges; summing the four corresponding capacity constraints
therefore gives $2\sum_Tp_T\le4$.
Our packing attains this bound, so
$\nu_w^*(G)=2$.
Since $W=8$ and $K_4$ is $K_5$-free,
Theorem~\ref{thm:graphs} gives $N_G(w)=6$.
The residual capacities are
$r_{12}=r_{34}=1$ and $r_e=0$ on the other edges.

For $|t|\le1$, define
\[
 h_t(S)=t\ind_{\{S=\{1,2\}\text{ or }S=\{3,4\}\}},
 \qquad
 u_t=f_p+h_t,\qquad v_t=g_p-h_t.
\]
We verify that $(u_t,v_t)$ is an optimal normalized
monotonic sum-decomposition.
The coverage formula gives $f_p(\varnothing)=0$,
$f_p(S)=4$ on singletons, $f_p(S)=5$ on the pairs
$\{1,2\},\{3,4\}$, and $f_p(S)=6$ on all other pairs,
triples, and the full vertex set.
For an edge $e=ij$, write
$V\setminus\{i,j\}=\{a,b\}$ and list second differences
in the order
$S=\varnothing,\{a\},\{b\},\{a,b\}$.
If $e$ is $12$ or $34$, these lists are
\[
 \Delta_{ij} f_p=(-3,-2,-2,-1),
 \qquad
 \Delta_{ij} h_t=(t,0,0,t).
\]
For any weight-one edge, they are
\[
 \Delta_{ij} f_p=(-2,-1,-1,0),
 \qquad
 \Delta_{ij} h_t=(0,-t,-t,0).
\]
Hence $-2w_e\le\Delta_{ij}u_t(S)\le0$ for every edge and
every admissible $S$ whenever $|t|\le1$.
Since $u_t+v_t=d_w$ and $\Delta_{ij}d_w(S)=-2w_e$,
we also have
$\Delta_{ij}v_t(S)=-2w_e-\Delta_{ij}u_t(S)\le0$.
All vertex pairs are edges of $K_4$, so these inequalities
prove submodularity of both components.

The perturbation vanishes on the empty set, singletons,
triples, and the full vertex set.
Thus $u_t(\varnothing)=v_t(\varnothing)=0$, and for
every vertex $i$,
$u_t(V)-u_t(V\setminus\{i\})=6-6=0$.
Also,
$v_t(\{i\})=g_p(\{i\})
=f_p(V\setminus\{i\})-f_p(V)=0$.
Diminishing returns therefore gives nonnegative marginals
for $u_t$ and nonpositive marginals for $v_t$.
Hence $u_t$ is increasing and $v_t$ is decreasing.
Its objective value is $u_t(V)=f_p(V)=6=N_G(w)$,
so the pair is optimal.
Moreover, $h_t(S)=h_t(V\setminus S)$ and $h_t(V)=0$,
which imply that the symmetrization of this pair is $f_p$.

Finally, the multilinear representation of the perturbation is
\[
 \begin{aligned}
 h_t(x)
 &=t\bigl[x_1x_2(1-x_3)(1-x_4)
          +x_3x_4(1-x_1)(1-x_2)\bigr]\\
 &=t\left(x_1x_2+x_3x_4
       -\sum_{\substack{T\subseteq V\\|T|=3}}
          \prod_{i\in T}x_i
       +2x_1x_2x_3x_4\right).
 \end{aligned}
\]
Its quartic coefficient is $2t$, so for $t\ne0$ it cannot
be represented by a quadratic polynomial.
Indeed, a perturbation satisfying the quadratic constraints
of Theorem~\ref{thm:allopt} for this packing would be supported
on the matching $\{12,34\}$, and $Bz=0$ would force
$z_{12}=z_{34}=0$.
The nonzero perturbations above therefore show directly
why that parametrization does not extend to all
$K_5$-free graphs.

\section*{Declaration of generative AI and AI-assisted technologies}
The authors used a large language model during the preparation of this manuscript
for assistance with mathematical exploration, checking arguments,
organization and language editing. The authors are responsible for
the mathematical statements, proofs, references and final text.

\bigskip
{\small
\noindent\textsc{Guoqing Wang}\par
\noindent\textsc{School of Mathematics, Tiangong University}\par
\noindent\emph{Email address:} \texttt{gqwang1979@aliyun.com}\par
\medskip
\noindent\textsc{Linlin Wang}\par
\noindent\textsc{School of Mathematics, China University of Mining and Technology}\par
\noindent\emph{Email address:} \texttt{5400@cumt.edu.cn}\par
\medskip
\noindent\textsc{Jun Yue}\par
\noindent\textsc{School of Mathematics, Tiangong University}\par
\noindent\emph{Email address:} \texttt{yuejun06@126.com}\par
}

\end{document}